\theoremstyle:=definition,remark,plain\do{%
        \expandafter\g@addto@macro\csname th@\theoremstyle\endcsname{%
            \addtolength\thm@preskip\parskip
            }%
        }
\newtheorem{thm}{Theorem}
\newtheorem{lemma}[thm]{Lemma}
\newtheorem{cor}[thm]{Corollary}
\newtheorem{coro}[thm]{Corollary}
\newtheorem{construction}{Construction}[section]
\newtheorem{claim}{Claim}[section]
\newtheorem{definition}{Definition}[section]
\title{}
\author{}
\begin{document}

\title{\vspace{-0.5in} A hypergraph analog 
of Dirac's Theorem for long cycles\\ in 2-connected graphs, II: Large uniformities}

\author{
{{Alexandr Kostochka}}\thanks{
\footnotesize {University of Illinois at Urbana--Champaign, Urbana, IL 61801. E-mail: \texttt {kostochk@illinois.edu}.
 Research 
is supported in part by  NSF  Grant DMS-2153507.
}}
\and
{{Ruth Luo}}\thanks{
\footnotesize {University of South Carolina, Columbia, SC 29208, USA. E-mail: \texttt {ruthluo@sc.edu}.
}}
\and{{Grace McCourt}}\thanks{University of Illinois at Urbana--Champaign, Urbana, IL 61801, USA. E-mail: {\tt mccourt4@illinois.edu}. Research 
is supported in part by NSF RTG grant DMS-1937241.}}

\date{ \today}
\maketitle

\vspace{-0.3in}

\begin{abstract}
Dirac proved that each $n$-vertex $2$-connected graph with minimum degree $k$ contains a cycle  of length at least $\min\{2k, n\}$. We obtain analogous results for Berge cycles in hypergraphs. Recently, the authors proved an exact lower bound on the minimum degree ensuring a Berge cycle of length at least $\min\{2k, n\}$ in $n$-vertex $r$-uniform $2$-connected hypergraphs when $k \geq r+2$. 
In this paper we address the case $k \leq r+1$ in which the bounds have a different behavior. We prove that each $n$-vertex $r$-uniform $2$-connected hypergraph $H$ with minimum degree $k$ contains a Berge cycle of length at least $\min\{2k,n,|E(H)|\}$. If $|E(H)|\geq n$, this bound coincides with the bound of the  Dirac's Theorem for 2-connected graphs. 
%

\medskip\noindent
{\bf{Mathematics Subject Classification:}} 05D05, 05C65, 05C38, 05C35.\\
{\bf{Keywords:}} Berge cycles, extremal hypergraph theory, minimum degree.
\end{abstract}

\section{Introduction and Results}
\subsection{Definitions and known results for graphs}
 A hypergraph $H$ is a family of subsets of a ground set. We refer to these subsets as the {\em edges} of $H$ and to the elements of the ground set as the {\em vertices} of $H$. We use $E(H)$ and $V(H)$ to denote the set of edges and the set of vertices of $H$ respectively. We say $H$ is {\em $r$-uniform}  ($r$-graph, for short) if every edge of $H$ contains exactly $r$ vertices. A {\em graph} is a 2-graph. 

The {\em degree} $d_H(v)$ of a vertex $v$ in a hypergraph $H$ is the number of edges containing $v$. 
The {\em minimum degree}, $\delta(H)$, is the minimum over degrees of all vertices of $H$. 

A {\em hamiltonian cycle} in a graph is a cycle which visits every vertex. 
Degree conditions providing that a graph has a hamiltonian cycle were first studied  by
 Dirac in 1952.

\begin{thm}[Dirac~\cite{D}]\label{dirac}
Let $n \geq 3$. If $G$ is an $n$-vertex graph with minimum degree $\delta(G) \geq n/2$, then $G$ has a hamiltonian cycle.
\end{thm}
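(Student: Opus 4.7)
The plan is to argue by contradiction using the classical longest-path rotation technique. Suppose $G$ satisfies $\delta(G)\ge n/2$ but has no Hamiltonian cycle. First I would observe that $G$ must be connected: if $G$ split into two components, the smaller one would have at most $n/2$ vertices, yet each of its vertices needs $n/2$ neighbors inside that component, which is impossible. This uses only the degree bound and a counting step.

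Next I would take a longest path $P = v_0 v_1 \cdots v_\ell$ in $G$. By the maximality of $P$, every neighbor of $v_0$ and of $v_\ell$ lies on $P$. Define
\[
S = \{\, i : v_i v_\ell \in E(G)\,\}, \qquad T = \{\, i : v_{i+1} v_0 \in E(G)\,\},
\]
so $S, T \subseteq \{0,1,\ldots,\ell-1\}$ with $|S| = d(v_\ell) \ge n/2$ and $|T| = d(v_0) \ge n/2$. Since $P$ has at most $n$ vertices we have $\ell \le n-1$, so $|S|+|T| \ge n > \ell$ and hence $S \cap T \ne \emptyset$. For any $i \in S \cap T$, the closed walk
\[
C = v_0 v_1 \cdots v_i v_\ell v_{\ell-1} \cdots v_{i+1} v_0
\]
is a cycle through all $\ell+1$ vertices of $P$.

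Finally, if $V(C) = V(G)$ then $C$ is the required Hamiltonian cycle. Otherwise, connectivity of $G$ guarantees some vertex $w \notin V(C)$ adjacent to a vertex $v_j$ on $C$; deleting one of the two $C$-edges at $v_j$ and prepending the edge $wv_j$ yields a path on $\ell+2$ vertices, contradicting the maximality of $P$. I expect the main obstacle to be the rotation step itself: setting up $S$ and $T$ inside $\{0,\ldots,\ell-1\}$ with total size exceeding $\ell$, and verifying that a common index $i$ really produces a well-defined cycle visiting every vertex of $P$ (so that the subsequent extension step has something to push against). The connectivity and extension arguments surrounding the rotation are short but indispensable in converting a long cycle through $V(P)$ into a Hamiltonian one.
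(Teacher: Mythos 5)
The paper states this result as Dirac's classical theorem with a citation and does not prove it, so there is no in-paper argument to compare against. Your proof is correct: it is the standard longest-path/rotation proof (show connectivity, take a longest path $P$, use pigeonhole on the two neighbor-index sets to find a common index and hence a cycle spanning $V(P)$, then use connectivity to extend into a longer path if that cycle is not Hamiltonian). The one place worth being explicit is the lower bound $\ell\ge 2$ needed for the rotated walk to be a genuine cycle; this follows since $\delta(G)\ge n/2\ge 3/2$ forces $\delta(G)\ge 2$ and hence $P$ has at least three vertices. Otherwise every step, including the degenerate cases $i=0$ and $i=\ell-1$, goes through as written.
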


In the same paper, Dirac observed that if $\delta(G) \geq 2$, then $G$ has a cycle of length at least $\delta(G)+1$. This is best possible: examples are many copies of $K_{\delta(G)+1}$ sharing a single vertex. Each of these graphs has a cut vertex.  Dirac~\cite{D} showed that each 2-connected graph has a much longer cycle. 

\begin{thm}[Dirac~\cite{D}]\label{dirac2}
Let $n \geq k \geq 2$. If $G$ is an $n$-vertex, $2$-connected graph with minimum degree $\delta(G) \geq k$, then $G$ has a cycle of length at least $\min\{2k, n\}$.
\end{thm}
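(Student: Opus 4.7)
I would argue by contradiction via the standard longest-cycle method. Suppose $G$ is an $n$-vertex 2-connected graph with $\delta(G)\ge k$, and let $C$ be a longest cycle of $G$; assume $|C|<\min\{2k,n\}$. The idea is to use the shortage $|C|<n$ to locate a component outside $C$ with two attachments on $V(C)$, and to use the shortage $|C|<2k$ to force both arcs of $C$ between those attachments to be shorter than $k$, so that a sufficiently long detour through the outside component yields a cycle longer than $C$.

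Since $|C|<n$, pick $v\notin V(C)$; let $H$ be the component of $G-V(C)$ containing $v$ and let $S=N_G(V(H))\cap V(C)$ be its set of attachments. The 2-connectivity of $G$ gives $|S|\ge 2$, for otherwise $S$ would be a cut separating $V(H)$ from $V(C)\setminus S$. Fix $u_1,u_2\in S$ and let $A_1,A_2$ be the two $u_1u_2$-arcs of $C$, so $|A_1|+|A_2|=|C|$. For any $u_1u_2$-path $Q$ with interior in $V(H)$, the union $Q\cup A_i$ is a cycle of length $|Q|+|A_i|$, so the maximality of $C$ forces $|Q|+|A_i|\le|C|$ for $i=1,2$, hence $|Q|\le\min\{|A_1|,|A_2|\}\le\lfloor|C|/2\rfloor<k$.

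The contradiction will come from producing, for some choice of $u_1,u_2\in S$, a $u_1u_2$-path $Q$ through $H$ with $|Q|\ge k$. The available leverage is that each $w\in V(H)$ has $d_G(w)\ge k$ with all neighbors in $V(H)\cup S$, so $G[V(H)\cup S]$ has rich path structure; in particular $|V(H)|\ge k-|S|+1$. A longest-path analysis inside $G[V(H)\cup S]$, refined by P\'osa-style rotations to migrate the endpoints of the longest detour into $S$, produces a path with both endpoints in $S$ and at least $k$ edges; taking $u_1,u_2$ to be its endpoints gives the desired $Q$.

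The main obstacle is precisely this last step: guaranteeing that the longest detour has both endpoints on $C$ rather than deep inside $V(H)$. When $|S|=2$ this is the most delicate case and is handled by iterating rotations while using the 2-connectivity of $G$ to force the rotation classes of longest paths rooted at $u_1$ to reach $u_2$; when $|S|\ge 3$, the abundance of arcs makes a similar but simpler length-counting argument go through by comparing detour lengths against several arcs of $C$ simultaneously.
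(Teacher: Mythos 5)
The paper does not prove Theorem~\ref{dirac2}; it cites Dirac's 1952 paper. However, the paper's methodology makes clear which route Dirac's argument takes: it uses a \emph{longest-path} argument together with a ``two aligned disjoint paths'' lemma for $2$-connected graphs (the lemma restated as Lemma~\ref{diraclemma}, which the paper explicitly describes as a modification of a lemma from Dirac's original proof). Your plan instead takes the \emph{longest-cycle} route, looking for a long ear through a component $H$ of $G-V(C)$. That is a genuinely different approach, so the comparison here is to whether your plan closes.

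It does not close as stated, and the gap is precisely the one you flag. Your key claim --- that a $u_1u_2$-path through $H$ with both ends in $S$ and at least $k$ edges can always be produced --- is simply false in general. If $|V(H)|=1$, the longest ear has $2$ edges; more generally, $|V(H)|$ can be as small as $k-|S|+1$, so when $|S|$ is large the ear is necessarily much shorter than $k$. In those regimes the contradiction does not come from a long ear at all, but from the abundance of attachments: for instance, if a single vertex $v\notin V(C)$ has $\ge k$ neighbours on $C$ and $|C|<2k$, two of them are consecutive on $C$ and $v$ can be inserted, and when $|V(H)|\ge 2$ one needs a delicate count showing that some short ear meets two attachment points whose shorter arc is shorter still. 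Your sentence about P\'osa-style rotations ``migrating the endpoints into $S$'' cannot repair this: rotations extend a path using high degree at its \emph{current} endpoint, but vertices of $S$ may have degree $1$ into $V(H)\cup S$, so rotations rooted there stall immediately, and when $H$ is small there is nothing to rotate. A correct write-up along your lines must split into cases (small $H$ with many attachments versus large $H$) and prove a quantitative ear-versus-attachment dichotomy; none of that is present. By contrast, the longest-path plus Lemma~\ref{diraclemma} route avoids this dichotomy entirely, which is one reason it is the standard proof and the one the paper builds on.
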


Both theorems are sharp. For example, when $2\leq k\leq n/2$, the complete bipartite graph $K_{k-1, n-(k-1)}$ has minimum degree
$k-1$ and the length of a longest cycle $2k-2$. Note that $K_{k-1, n-(k-1)}$ is $(k-1)$-connected, so for large $k$ demanding $3$-connectedness instead of $2$-connectedness   in Theorem~\ref{dirac2} will not strengthen the bound.
 
Stronger statements 
 for bipartite graphs  have been proved  by Voss and Zuluaga~\cite{VZ},
and then  refined by Jackson~\cite{jackson2}:

\begin{thm}[Jackson~\cite{jackson2}]\label{jack}
Let  $G$ be a 2-connected bipartite graph with bipartition $(A,B)$, where $|A|\geq |B|$. If each vertex of $A$ has degree at least $a$ and each vertex of $B$ has degree at least $b$,
 then $G$ has a cycle of length at least $2\min\{|B|, a+b-1, 2a-2\}$. Moreover, if $a=b$ and $|A|=|B|$, then $G$ has a cycle of length at least $2\min\{|B|,  2a-1\}$.
\end{thm}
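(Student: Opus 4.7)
The plan is to proceed by contradiction. Let $\ell = \min\{|B|, a+b-1, 2a-2\}$, and suppose a longest cycle $C$ in $G$ satisfies $|C| < 2\ell$. Because $G$ is bipartite, $C$ is even with $|V(C)\cap A| = |V(C)\cap B| = |C|/2$. If $V(C) = V(G)$, then $|A| = |B|$ and $|C| = 2|B| \geq 2\ell$, a contradiction. So we may fix some $z \in V(G) \setminus V(C)$ and let $K$ be the component of $G - V(C)$ containing $z$.

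Next I would use 2-connectedness of $G$, via Menger's theorem applied to $K$ and $V(C)$, to produce two internally disjoint paths $Q_1, Q_2$ from $K$ to $V(C)$, each internally disjoint from $C$, with distinct endpoints $x_1, x_2 \in V(C)$. Write $C = x_1 S_1 x_2 S_2 x_1$ for the two arcs. By the maximality of $|C|$, replacing either arc $S_i$ by the detour through $Q_1$, a path inside $K$, and $Q_2$ yields a cycle no longer than $C$, which in particular gives a lower bound on $\min\{|S_1|,|S_2|\}$ in terms of the length of the detour.

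The core argument is a P\'osa-type exchange along $C$. Fix an orientation of $C$; for each neighbor $y$ of $x_1$ on $C$, its successor $y^+$ cannot be adjacent to $x_2$, for otherwise splicing in the edges $x_1y$ and $y^+x_2$ together with the detour through $Q_1, K, Q_2$ produces a strictly longer cycle. The bipartite structure pins the neighbors of each $x_i$ to a single part, giving parity-sensitive packing inequalities along each arc. I would split on whether $\{x_1, x_2\}$ straddles the bipartition or $x_1, x_2$ lie in the same part. A careful count then yields $|C| \geq 2(a+b-1)$ in the first case and $|C| \geq 2(2a-2)$ in the second, each contradicting $|C| < 2\ell$.

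The main obstacle will be the subcase $x_1, x_2 \in A$: a direct count of neighbors at $x_1$ and $x_2$ alone only gives $|C| \geq 2a$, and obtaining the sharper $2(2a-2)$ bound requires introducing a third reference vertex (naturally a neighbor of $z$ in $K$, transferred onto $C$ by a rotation) and combining its degree-$b$ information with an additional forbidden-configuration argument. The ``moreover'' clause for balanced bipartitions with $a=b$ is obtained by a parity refinement: when $|A|=|B|$ and the two parts of $V(C)$ are fully saturated, one additional forbidden successor-index is ruled out, tightening the bound to $|C| \geq 2(2a-1)$.
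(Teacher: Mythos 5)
The paper gives no proof of this statement: Theorem~\ref{jack} is cited verbatim from Jackson~\cite{jackson2}, so there is no internal argument to compare your proposal against.

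Read on its own terms, your proposal sets up a plausible framework (longest-cycle contradiction, two bridge paths from a component of $G-V(C)$ via $2$-connectedness, a P\'osa-type successor exchange, and a case split on whether the attachment vertices $x_1,x_2$ lie in the same or opposite parts of the bipartition), but the steps that actually produce the three thresholds are not carried out, and the one exchange you do state is vacuous in the hard case. Concretely, the claim ``for each neighbor $y$ of $x_1$ on $C$, its successor $y^+$ cannot be adjacent to $x_2$'' gives information only when $x_1$ and $x_2$ lie in opposite parts: if $x_1,x_2\in A$, then any $y\in N(x_1)$ lies in $B$, so $y^+\in A$, and $y^+$ is never adjacent to $x_2$ merely because $G$ is bipartite, so the forbidden-successor condition is automatic and the count collapses. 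You acknowledge this is the obstacle and assert that a ``third reference vertex'' and an ``additional forbidden-configuration argument'' will recover the $2(2a-2)$ term, but that is a description of what a proof ought to do, not a proof of it; this case is exactly where the substance of Jackson's theorem lives. The ``moreover'' clause ($2(2a-1)$ when $a=b$, $|A|=|B|$) is similarly asserted by appeal to ``one additional forbidden successor-index'' without identifying which index is ruled out or why. So the scaffolding is sensible, but there are genuine gaps at precisely the steps that deliver the $a+b-1$ and $2a-2$ bounds and the balanced refinement.
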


A sharpness example for Theorem~\ref{jack} is a graph $G_3=G_3(a,b,a',b')$ for $a'\geq b'\geq a+b-1$ obtained from
disjoint complete bipartite graphs $K_{a'-b,a}$ and $K_{b,b'-a}$ by joining each vertex in the $a$ part of $K_{a'-b,a}$ to
each vertex in the $b$ part of $K_{b,b'-a}$.


\subsection{Definitions and known results for uniform hypergraphs}

We consider  {\em Berge cycles} in hypergraphs.

\begin{definition}
A {\bf Berge cycle of} length $c$ in a hypergraph is an alternating list of $c$ distinct vertices and $c$ distinct edges $C=v_1, e_1, v_2, \ldots,e_{c-1}, v_c, e_c, v_1$ such that $\{v_i, v_{i+1}\} \subseteq e_i$ for all $1\leq i \leq c$ (we always take indices of cycles of length $c$ modulo $c$).
We call vertices $v_1, \ldots, v_c$ {\bf the defining vertices} of $C$ and
the pairs $v_1 v_{2},v_2v_3, \ldots,v_c v_{1}$
{\bf the defining edges} of $C$. Given some edge $e_i \in E(C)$, we also call the defining edge $v_iv_{i+1}$ the {\bf the projection of} $e_i$. 
 We write
$V(C)=\{v_1, \ldots, v_c\}$, $E(C) = \{e_1, \ldots, e_c\}$. 
\end{definition}

Notation for  Berge paths is similar. In addition, a {\bf partial Berge path} is an alternating sequence of distinct  edges and vertices beginning with an edge and ending with a vertex $e_0, v_1, e_1, v_2, \ldots, e_k, v_{k+1}$ such that $v_1 \in e_0$ and for all $1\leq i \leq k$, $\{v_i, v_{i+1}\} \subseteq e_i$. 

The {\em circumference}, $c(H)$, of a (hyper)graph $H$ is the length of a longest (Berge) cycle in $H$.

A series of variations of Theorem~\ref{dirac} for Berge cycles in a number of classes of 
  $r$-graphs were obtained by
Bermond,  Germa,  Heydemann and Sotteau~\cite{BGHS}, Clemens, Ehrenm\"uller and Person~\cite{CEP},
Coulson and Perarnau~\cite{CP}, Ma, Hou, and Gao~\cite{MHG}, the present authors~\cite{KLM}, and Salia~\cite{SN} .

In particular, exact bounds for all values of $3\leq r<n$ are as follows.

\begin{thm}[Theorem 1.7 in~\cite{KLM}]\label{mainold2}
Let $t=t(n)=\lfloor \frac{n-1}{2} \rfloor$, and suppose $3 \leq  r <n$. Let $H$ be an $r$-graph. If
\hspace{1mm}
(a) $r\leq t$ and $\delta(H) \geq  {t \choose r-1} +1$ or 
\hspace{0.5mm}
(b) $r\geq n/2$ and $\delta(H) \geq r$, 
 \hspace{0.5mm}
 then $H$ contains a hamiltonian Berge cycle. 
\end{thm}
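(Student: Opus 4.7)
The plan is to argue by contradiction in both cases, extracting structure from a longest Berge cycle (or a longest Berge path when no cycle exists) together with the minimum degree hypothesis.

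For part (a), where $r\le t$ and $\delta(H)\ge \binom{t}{r-1}+1$, take a longest Berge cycle $C$ in $H$ with defining vertices $v_1,\ldots,v_c$ realized by hyperedges $e_1,\ldots,e_c$. If $c=n$ we are done, so suppose some vertex $u\notin V(C)$. For each hyperedge $f\ni u$, the set $f\setminus\{u\}$ is an $(r-1)$-subset of $V(H)\setminus\{u\}$. The key step is to show that whenever this set meets $V(C)$ in a configuration admitting a splicing or a Pósa-style rotation---say a pair $\{v_i,v_{i+1}\}$ together with another cycle vertex whose incident defining edge can be freed, or a pair $v_i,v_j$ suitably spaced so that $e_i$ can be reassigned while $u$ is inserted---one obtains a Berge cycle strictly longer than $C$, contradicting maximality. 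Consequently, the family $\{f\setminus\{u\} : u\in f\in E(H)\}$ lies in a restricted collection of $(r-1)$-subsets, and a counting argument based on forbidden cyclic patterns along $C$ bounds the size of this collection by $\binom{t}{r-1}$, contradicting $d_H(u)\ge \binom{t}{r-1}+1$.

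For part (b), where $r\ge n/2$ and $\delta(H)\ge r$, every hyperedge covers at least half the vertex set. Take a longest Berge path $P=v_1 e_1 v_2\cdots e_{k-1}v_k$. Since $d_H(v_k)\ge r$ but $P$ uses only $k-1\le n-1$ edges, there is an unused hyperedge $e\ni v_k$; since $|e|=r\ge n/2$, pigeonhole forces $e$ to contain some earlier defining vertex $v_i$, producing a Berge cycle $C_0$ of length $k-i+1$ through $v_k$. Iterating this construction, exploiting again $\delta(H)\ge r$ and the large edge size to rotate endpoints and free fresh hyperedges, shows that every vertex outside the current cycle can be absorbed, eventually yielding a hamiltonian Berge cycle.

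The main obstacle I expect is the counting step in part (a): one has to characterize precisely which $(r-1)$-subsets of $V(C)$ are \emph{safe} (do not force a longer cycle) and show that the safe family has cardinality at most $\binom{t}{r-1}$. Tracking which hyperedges $e_i$ remain reusable and which defining vertices become new endpoints after a rotation is delicate because a single hyperedge $e_i$ contains $r$ vertices that may themselves be defining vertices of $C$, and the induced "forbidden" configurations interact nontrivially. In part (b) the borderline case $r=\lceil n/2\rceil$ will need extra care, since the pigeonhole overlap of two hyperedges degenerates exactly there and the absorption step relies on having some slack beyond equality.
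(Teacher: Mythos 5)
This statement is not proved in the paper at all: it is quoted verbatim as Theorem~1.7 of the authors' earlier work~\cite{KLM} and used as a black box, so there is no in-paper argument to compare yours against. Judged on its own, your proposal is a plan rather than a proof, and both halves have genuine gaps.

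In part~(a), the entire content of the theorem is the step you yourself flag as ``the main obstacle'': showing that the $(r-1)$-sets $f\setminus\{u\}$ over edges $f\ni u$ form a ``safe'' family of size at most $\binom{t}{r-1}$ with $t=\lfloor (n-1)/2\rfloor$. You give no characterization of the safe configurations and no counting argument; in particular you do not explain why the forbidden patterns confine the neighborhood of $u$ to (essentially) a $t$-subset of $V(C)$, which is what the binomial bound requires. P\'osa-type rotations for Berge cycles are exactly where the difficulty lies, because a rotation must avoid reusing hyperedges already serving as defining edges of $C$, and you only note this is ``delicate'' without resolving it. In part~(b), the very first deduction is invalid: from $d_H(v_k)\ge r$ and the fact that $P$ uses $k-1\le n-1$ edges you cannot conclude that some hyperedge containing $v_k$ is unused, since $k-1$ can exceed $r$ (e.g.\ $k$ close to $n$ and $r$ close to $n/2$), so all $r$ edges through $v_k$ may already be defining edges of $P$. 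The subsequent ``iterating this construction\ldots shows that every vertex outside the current cycle can be absorbed'' is asserted, not argued; absorption of an outside vertex into a Berge cycle requires producing a fresh, unused hyperedge for each insertion, which is precisely the resource that is scarce when $|E(H)|$ is small relative to $n$. As it stands the proposal identifies the right general toolkit (longest cycle/path, rotations, degree counting) but proves neither case.
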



For an analog of Theorem~\ref{dirac2}, we define the connectivity of a hypergraph with the help of its {\em incidence bipartite graph}:

\begin{definition}Let $H$ be a hypergraph. The {\bf incidence  graph $I_H$ of $H$} is the bipartite graph with $V(I_H) = X \cup Y$ such that $X = V(H), Y = E(H)$ and for $x \in X, y\in Y$, $xy \in E(I_H)$ if and only if the vertex $x$ is contained in the edge $y$ in $H$.
\end{definition}

It is easy to see that if $H$ is an $r$-graph with minimum degree $\delta(H)$, then each $x \in X$ and each $y \in Y$ satisfy $d_{I_H}(x) \geq \delta(H), d_{I_H}(y) = r$. Furthermore, there is a natural bijection between the set of Berge cycles of length $c$ in $H$ and the set of cycles of length $2c$ in $I_H$:  such a Berge cycle $v_1, e_1, \ldots, v_c, e_c, v_1$ can also be viewed as a cycle in $I_H$ with the same sequence of vertices.

Using the notion of the incidence  graph, we  define connectivity in hypergraphs.

\begin{definition}
A hypergraph $H$ is {\bf $k$-connected} if its incidence  graph $I_H$ is a $k$-connected graph. 
\end{definition}

Theorem~\ref{jack} of Jackson applied to $I_H$ of a $2$-connected $r$-graph $H$ yields the following approximation of an analog of Theorem~\ref{dirac2}  for 
 $k\leq r-1$:

\begin{coro}\label{jackcor} Let $n, k, r$ be positive integers with $2 \leq k \leq r-1$. If $H$ is an  $n$-vertex $2$-connected $r$-graph $H$ with $\delta(H)\geq k+1$,
then  $c(H)\geq \min\{2k,n,|E(H)|\}$. 
\end{coro}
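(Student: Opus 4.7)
The plan is to apply Jackson's theorem (Theorem~\ref{jack}) directly to the incidence bipartite graph $I_H$, and then translate the resulting cycle in $I_H$ to a Berge cycle in $H$ via the natural bijection between Berge cycles of length $c$ in $H$ and cycles of length $2c$ in $I_H$. Since $H$ is 2-connected, so is $I_H$. The bipartition $X \cup Y$ of $I_H$ has $X = V(H)$ of size $n$ with every vertex of degree at least $\delta(H) \geq k+1$, and $Y = E(H)$ with every vertex of degree exactly $r$.

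I would split into two cases depending on which side of the bipartition is larger. When $n \geq |E(H)|$, take $A = X$ and $B = Y$ in Theorem~\ref{jack}, so that $|B| = |E(H)|$, $a = k+1$, and $b = r$. When $n < |E(H)|$, take $A = Y$ and $B = X$, so that $|B| = n$, $a = r$, and $b = k+1$. In either case, the hypothesis $k \leq r-1$, i.e., $r \geq k+1$, gives
\[
a + b - 1 \;\geq\; (k+1) + r - 1 \;=\; k + r \;\geq\; 2k+1 \qquad\text{and}\qquad 2a - 2 \;\geq\; 2(k+1) - 2 \;=\; 2k.
\]
Therefore Theorem~\ref{jack} yields a cycle in $I_H$ of length at least $2\min\{|B|, 2k\}$, which corresponds to a Berge cycle in $H$ of length at least $\min\{|B|, 2k\}$.

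It remains to combine the two cases. In the first case $|B| = |E(H)| \leq n$, so $\min\{|B|, 2k\} = \min\{n, |E(H)|, 2k\}$; in the second case $|B| = n < |E(H)|$, so again $\min\{|B|, 2k\} = \min\{n, |E(H)|, 2k\}$. Hence $c(H) \geq \min\{2k, n, |E(H)|\}$, as required. There is no real obstacle here: the only point to verify is that under the hypothesis $k \leq r-1$, the quantity $2a-2$ (rather than $|B|$ or $a+b-1$) is what governs the bound from Jackson's theorem, and the arithmetic above confirms this in both cases.
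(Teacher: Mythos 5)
Your proposal is correct and is exactly the argument the paper intends: the paper derives Corollary~\ref{jackcor} by applying Jackson's theorem to the incidence graph $I_H$ (it simply omits the details you supply, namely the case split to satisfy $|A|\geq|B|$ and the arithmetic showing $2a-2$ dominates $a+b-1$ when $r\geq k+1$). The translation back via the bijection between length-$2c$ cycles in $I_H$ and length-$c$ Berge cycles in $H$ is also as in the paper.
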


The following construction shows that the bound of Corollary~\ref{jackcor} is not far from exact.

\begin{construction}\label{cons1} 
 For $m\geq 2$, let 
$V(H_k)=A_1\cup \ldots \cup A_m\cup \{x,y\}$ where $A_i=\{a_{i,1},\ldots,a_{i,r-1}\}$ for $1\leq i\leq m$, and let
$E(H_k)=E_1\cup \ldots \cup E_m$ where for each $1\leq i\leq m$ and $1\leq j\leq k-1$, $E_i=\{e_{i,1},\ldots,e_{i,k-1}\}$ and
$e_{i,j}=(A_i-a_{i,j})\cup \{x,y\} $. By construction, $H_k$ is $2$-connected and $\delta(H_k)=k-2$.
Each Berge cycle in $H_k$ can contain edges from at most two $E_i$s, and $|E_i|=k-1$ for all $1\leq i\leq m$.
So, $c(H_k)=2k-2$.

\end{construction}

Very recently, the authors~\cite{KLM4} proved an exact analog of Theorem~\ref{dirac2} for $r$-graphs when $k\geq r+2$:

\begin{thm}\label{mainthm} Let $n, k, r$ be positive integers with $3\leq r \leq k-2\leq n-2 $. If $H$ is an  $n$-vertex $2$-connected $r$-graph with
\begin{equation}\label{main}
\delta(H) \geq {k-1 \choose r-1} + 1,
\end{equation}
then 
 $c(H)\geq \min\{2k,n\}$. 
\end{thm}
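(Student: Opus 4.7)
My proof plan is a contradiction argument in the spirit of Dirac and the authors' earlier work on Berge cycles. Suppose $H$ satisfies the hypotheses of Theorem~\ref{mainthm} but $c(H)<\min\{2k,n\}$. Fix a longest Berge cycle $C = v_1, e_1, v_2, \ldots, v_c, e_c, v_1$, so that $c\leq 2k-1$ and $c\leq n-1$. Since $c<n$, there is a vertex $u\in V(H)\setminus V(C)$ to work with.

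The first step is to anchor $u$ into $C$. I would apply Menger's theorem inside $I_H$ (which is $2$-connected by hypothesis) to produce two internally vertex-disjoint Berge paths $Q_1,Q_2$ in $H$ from $u$ to distinct vertices $x,y\in V(C)$, with $E(Q_1)\cup E(Q_2)$ disjoint from $E(C)$. Writing $q_i=|E(Q_i)|\geq 1$, the pair $x,y$ splits $C$ into two arcs of edge-lengths $\alpha\leq\beta$ with $\alpha+\beta=c$. Concatenating the longer arc with $Q_1$, $Q_2$, and $u$ yields a Berge cycle of length $\beta+q_1+q_2$, so extremality of $C$ forces $q_1+q_2\leq \alpha$, whence $\alpha\leq c/2\leq k-\tfrac12$, i.e. $\alpha\leq k-1$.

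The second step is to exploit the degree hypothesis $\delta(H)\geq\binom{k-1}{r-1}+1$ in its dual form: for every vertex $w$ and every set $T\subseteq V(H)\setminus\{w\}$ with $|T|\leq k-1$, some edge through $w$ must meet a vertex outside $T\cup\{w\}$, since otherwise $d_H(w)\leq\binom{|T|}{r-1}\leq\binom{k-1}{r-1}$. Choosing $T$ to be the vertex set of the short arc of $C$ minus one of the endpoints $x$ or $y$ (so that $|T|\leq \alpha\leq k-1$), and applying the principle to $w=u$, one obtains an edge $f\ni u$ landing on some vertex $z\notin T\cup\{u\}$. Depending on whether $z$ lies on the long arc, outside $V(C)$, or at the excluded endpoint, one either directly builds a Berge cycle strictly longer than $C$ (contradiction), or swaps to a new path system $Q_1',Q_2'$ and iterates. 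A monovariant such as $q_1+q_2$ or the minimum arc length $\alpha$ should force termination.

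The main obstacle will be this last conversion step. In the hypergraph setting, each $r$-edge contains $r\geq 3$ vertices but contributes only one defining pair to a Berge cycle; so edges of $C$ may reappear as ``chord edges'' in extensions (with a different assigned defining pair), and conversely the escaping edge $f$ must avoid using any edge of $E(C)\cup E(Q_1)\cup E(Q_2)$ that has already exhausted its available pairings. Careful bookkeeping of which of the $r$ vertices of each edge is designated as a defining vertex, where, constitutes the bulk of the technical work, and the argument must split into several cases according to the overlap of $f$ with the existing edge sets. The hypothesis $r\leq k-2$ is essential here: it keeps $\binom{k-1}{r-1}$ large enough to force escape edges at every vertex simultaneously, ruling out local structures analogous to Construction~\ref{cons1}, which in the complementary $k\leq r+1$ regime (the subject of the present paper) would make the stronger bound $\min\{2k,n\}$ fail and necessitate the weaker $\min\{2k,n,|E(H)|\}$.
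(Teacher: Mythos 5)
First, a bookkeeping point: Theorem~\ref{mainthm} is not proved in this paper at all --- it is quoted from the companion paper~\cite{KLM4} (the case $k\geq r+2$), so there is no in-paper proof to compare against. Judged on its own merits, your proposal is a plan rather than a proof, and the part you defer is precisely where all the difficulty lives. Your first two steps are sound in outline (longest cycle, Menger in $I_H$ to get $q_1+q_2\leq\alpha\leq k-1$, and the pigeonhole observation that $d_H(u)>\binom{k-1}{r-1}$ forces an edge through $u$ escaping any prescribed $(k-1)$-set), and this is indeed the same general skeleton the authors use. But already here there is a glossed-over issue: Menger applied to the bipartite incidence graph may return paths whose endpoint on $C$ is an \emph{edge}-vertex of $I_H$ rather than a vertex of $H$, so you cannot assume both $Q_1,Q_2$ land on vertices $x,y\in V(C)$ with $E(Q_i)\cap E(C)=\emptyset$; this is exactly why the present paper must distinguish o-lollipops from p-lollipops and why Claim~\ref{longQ} has three separate cases depending on whether the attachment points are vertices or edges.

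The genuine gap is the ``swap and iterate'' step. When the escaping edge $f$ lands outside $V(C)$, you do not get a longer cycle or a better two-path system: you get a longer pendant path hanging off $C$, and neither $q_1+q_2$ nor $\alpha$ is a monovariant under that move. When $f$ lands on the long arc, a longer cycle is not automatic either --- you must reroute through one of the two arcs between the new attachment points, and whether this gains length depends on where \emph{all} attachment points sit, on whether $f$ coincides with an edge of $E(C)\cup E(Q_1)\cup E(Q_2)$, and on which vertex of each $r$-edge is designated as defining. Making ``iterate with a monovariant'' precise is exactly the content of the lollipop machinery: one needs a well-founded ordering on configurations refined enough ((R1)--(R4) here, with cycle length first, then pendant-path length, then the multiplicity counts) that every escape move strictly improves the configuration, plus structural lemmas (the analogues of Claims~\ref{bigsmallcycle}--\ref{bestp} and of Lemma~\ref{diraclemma}) controlling where edges incident to the pendant path can land. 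Without specifying that ordering and carrying out the case analysis --- which you explicitly label ``the bulk of the technical work'' --- the argument does not close, and the naive measures you name can cycle or stall. So the proposal identifies the right strategy but leaves the theorem unproved.
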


Observe that the minimum degree required to guarantee a Berge cycle of length at least $2k$  in a $2$-connected $r$-graph is roughly of the order $2^{r-1}/r$ times smaller than the sharp bound guaranteed in Theorem~\ref{mainold2}(b). 
The following constructions show the sharpness of Theorem~\ref{mainthm}.

\begin{construction}Let $q\geq 2$ be an integer and $4 \leq r+1 \leq k \leq n/2$. For $n = q(k-2)+ 2$, let $H_1 = H_1(k)$ be the $r$-graph with $V(H_1) = \{x,y\} \cup V_1 \cup V_2 \cup \ldots \cup V_q$ where  for all $1\leq i\leq q$,  $|V_i|=k-2$ and $V_i \cup \{x,y\}$ induces a clique.  Thus $c(H_1)\leq  2(k-2) + 2 = 2k-2$.
\end{construction}

\begin{construction}\label{constbip}Let $4 \leq r+1 \leq k \leq n/2$. Let $H_2 = H_2(k)$ be the $r$-graph with $V(H_2) = X \cup Y$ where $|X| = k-1$, $|Y| = n-(k-1)$, and $E(H_2)$ is the set of all hyperedges containing at most one vertex in $Y$. No Berge cycle can contain consecutive vertices in $Y$, so  $c(H_2)\leq 2k-2$. 
\end{construction}

One can check that both $H_1$ and $H_2$ have minimum degree ${k-1 \choose r-1}$ and $H_2$ is $(k-1)$-connected and is well defined for all $n\geq k$. 

It was also proved in~\cite{KLM4} that for $3\leq r<n$ and every  $n$-vertex $2$-connected $r$-graph  $H$,  $c(H)\geq \min\{4,n,|E(H)|\}$. 

\subsection{The main result and structure of the paper}

The ideas of~\cite{KLM4} were insufficient to prove exact results for $3\leq k\leq r+1$, because in this case the conditions providing a cycle of length at least $2k$ are weaker. For $3\leq k\leq r+1$, the restrictions on the minimum degree are linear in $k$ while for $k\geq r+2$ they are at least quadratic. Our main result is

\begin{thm}\label{mainthm3} Let $n, k, r$ be positive integers with $3\leq k \leq r+1\leq n $. If $H$ is an  $n$-vertex $2$-connected $r$-graph with
\begin{equation}\label{main2}
\delta(H) \geq k,
\end{equation}
then $c(H)\geq \min\{2k,n,|E(H)|\}$. 
\end{thm}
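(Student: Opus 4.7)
The plan is to argue by contradiction: let $C = v_1 e_1 v_2 \cdots v_c e_c v_1$ be a longest Berge cycle in $H$ and assume $c < \min\{2k, n, |E(H)|\}$, so $c \le 2k-1$, there is some $u \in V(H) \setminus V(C)$, and $E(H) \setminus E(C) \neq \emptyset$.

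\textbf{Reduction via Jackson.} My first step would be to apply Theorem~\ref{jack} to the incidence graph $I_H$, which is $2$-connected bipartite with one side of minimum degree at least $k$ and the other side $r$-regular. Since $k \le r + 1$, the middle term in Jackson's minimum satisfies $k + r - 1 \ge 2k - 2$, so that minimum collapses to $\min\{|B|, 2k-2\}$. This yields a Berge cycle of length at least $\min\{|E(H)|, 2k-2\}$ when $n \ge |E(H)|$, and at least $\min\{n, 2k-2\}$ or $\min\{n, 2k-4\}$ when $|E(H)| > n$ (for $k = r$ and $k = r+1$ respectively; the case $|E(H)| > n$, $k \le r-1$ already yields $\min\{n, 2k\}$ directly). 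Hence I may assume $\min\{n, |E(H)|\} \ge 2k$ and $c \in \{2k-2, 2k-1\}$; the task reduces to boosting $c$ by only $1$ or $2$ (or at most $3$ in the boundary subcase).

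\textbf{Rerouting via $2$-connectivity.} By $2$-connectivity of $I_H$ there are two internally vertex-disjoint Berge paths from $u$ to two distinct defining vertices $v_a, v_b \in V(C)$; since $|E(H)| > c$, these can be chosen so that their hyperedges lie outside $E(C)$. Concatenating, I obtain a Berge path $P$ of length $|P| \ge 2$ from $v_a$ through $u$ to $v_b$. Letting $s \le t$ be the lengths of the two arcs of $C$ joining $v_a, v_b$, replacing the shorter arc by $P$ produces a Berge cycle of length $c - s + |P|$, so the maximality of $c$ forces $|P| \le s \le c/2 < k$. I would then choose such a quadruple $(u, v_a, v_b, P)$ with $s$ minimal. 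The hypothesis $\delta(H) \ge k$ supplies $u$ with at least $k$ incident hyperedges, and the minimality of $s$ should force each of them to attach to $V(C)$ inside the short arc between $v_a, v_b$. Combining this trapping with the $r$-uniformity (each hyperedge contains $r \ge k-1$ ground vertices) and pigeonhole / double counting should derive the desired contradiction.

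\textbf{The main obstacle} is the tightness of the regime: Jackson is short by only $1$ or $2$ (and up to $4$ in one boundary subcase), so every hyperedge used in the rerouting must be carefully shown to be distinct from those already on the surviving arc of $C$. In the boundary cases $k \in \{r, r+1\}$, a single hyperedge can contain nearly all of $V(C)$, so external rerouting and internal chord rerouting become entangled, which makes the bookkeeping of available hyperedges delicate. The subcase $|E(H)| > n$ with $k = r+1$, where Jackson's initial bound drops to $2k-4$, is the hardest and may require a separate preliminary step --- for instance a direct rotation-based construction of a long Berge path --- before the main rerouting argument can close the full gap.
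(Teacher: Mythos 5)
Your Jackson reduction is a legitimate and correct observation: for $k\le r+1$, Theorem~\ref{jack} applied to $I_H$ gives a Berge cycle of length at least $\min\{n,|E(H)|,2k-2\}$ (dropping to $2k-4$ when $|E(H)|>n$ and $k=r+1$), so the theorem is ``almost'' a corollary of Jackson and the remaining task is to gain a constant. This is a genuinely different route from the paper, which never invokes Jackson at this point and instead builds the entire machinery of lollipops, dcp-pairs, and dcc-pairs, ordered by Rules (R1)--(R4)/(S1)--(S5), and proves the four structural Lemmas~\ref{nlem3}--\ref{nlem1} together with a Dirac-lemma variant (Lemma~\ref{diraclemma}) to rule out a minimal counterexample.

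However, the proposal has a real gap: the ``rerouting'' step is not an argument, it is a hope. You claim that choosing the external reroute $(u,v_a,v_b,P)$ with the short arc $s$ minimal ``should force'' all hyperedges through $u$ to attach inside that arc, and that ``pigeonhole / double counting should derive the desired contradiction.'' Neither step is justified, and neither is obviously true. In fact Construction~\ref{cons1} shows the danger concretely: there, every hyperedge through a vertex outside $C$ concentrates on a fixed small set, yet the circumference is exactly $2k-2$ and the minimum degree is $k-2$. So concentration of $u$'s hyperedges inside a small arc is precisely the \emph{extremal} behavior, not an absurdity, and bare pigeonholing on $r$-uniformity cannot distinguish your hypothesis $\delta\ge k$ from $\delta\ge k-1$; some finer structural step is needed to convert the one extra edge at each vertex into an extra vertex on a cycle. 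That finer step is exactly what the paper's Lemmas~\ref{nlem3}--\ref{nlem1} supply, and it occupies four sections. Two secondary issues: (i) the assertion that $2$-connectivity of $I_H$ yields two paths from $u$ to distinct $v_a,v_b\in V(C)$ whose hyperedges avoid $E(C)$ is not automatic --- Menger gives internal disjointness from the cycle in $I_H$, but the endpoints of the two paths can be edges $e_i\in E(C)$, and this case (which is the p-lollipop case in the paper) needs its own treatment; (ii) in the subcase $k=r+1$, $|E(H)|>n$, you concede the Jackson bound drops to $2k-4$ and that a separate argument ``may be required'' --- but no candidate argument is offered, so the hardest case of the theorem remains entirely unaddressed. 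As it stands the plan identifies the right difficulty but supplies no mechanism to resolve it.
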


Note that under the conditions of the theorem it might occur that $|E(H)|<\min\{2k,n\}$, which could not happen in
Theorem~\ref{mainthm}.
The bound of the theorem coincides with the bound of 
 Theorem~\ref{dirac2} for 2-connected graphs when $|E(H)|\geq\min\{2k,n\}$. 
It
 is sharp when $k=3$ and when $k=r+1$. For $4\leq k\leq r$ the theorem improves the bound of 
Corollary~\ref{jackcor}, and Construction~\ref{cons1} shows that the bound either is exact or differs from the exact by  $1$.


The proof of our main result,
Theorem~\ref{mainthm3} is by contradiction. We consider a counterexample $H$ to the theorem and study its properties to show that such an example cannot exist.
In Section~\ref{setup!} we introduce  notation,
 define special substructures of $H$, 
so called {\em lollipops} and {\em disjoint cycle-path pairs},
and define when a structure in $H$ is {\em better} than another structure. In these terms, we explain the structure of the paper in more detail and state our main lemmas. In Section~\ref{simple}
 we derive some properties of ``good" 
lollipops and  disjoint cycle-path pairs. In  the remaining four sections we prove the four main lemmas stated in Section~\ref{setup!}.

\section{Notation and setup}\label{setup!}

For a hypergraph $H$, and a vertex $v \in V(H)$, \[N_{H}(v) = \{u \in V(H): \text{there exists } e \in E(H) \text{ such that } \{u,v\} \subset e\}\] is the {\em $H$-neighborhood} of $v$. 
%

When $G$ is a subhypergraph of a hypergraph $H$ and $u,v\in V(H)$,
we say that $u$ and $v$ are {\em $G$-neighbors} if there exists an edge $e \in E(G)$ containing both $u$ and $v$.

%
If $P$ is a (Berge) path and $a$ and $b$ are two elements of $P$, we use $P[a,b]$ to denote the unique segment of $P$ from $a$ to $b$. 

Let $r\geq 3$. Let $H$ be a 2-connected, $n$-vertex, $r$-uniform hypergraph satisfying~\eqref{main2}.
 Suppose 
 \begin{equation}\label{2kn}
 \mbox{\em $H$ does not contain a Berge cycle of length at least    $\min\{2k,n,|E(G)|\}$.}
 \end{equation}
 Recall that this minimum is not $n$ by Theorem~\ref{mainold2}.


A {\em lollipop $(C,P)$} is a pair where $C$ is a Berge cycle and $P$ is a Berge path or a partial Berge path that satisfies one of the following:

 \begin{enumerate}
 \item[---] $P$ is a Berge path starting with a vertex in $C$, $|V(C) \cap V(P)| = 1$, and $|E(C) \cap E(P)| = 0$. We call such a pair $(C,P)$ an {\bf ordinary lollipop} (or o-lollipop for short). See Fig.~\ref{pics} (left).
 
 \item[---] $P$ is a partial Berge path starting with an edge in $C$, $|V(C) \cap V(P)| = 0$, and $|E(C) \cap E(P)| = 1$. We call such a pair $(C,P)$ a {\bf partial lollipop} (or p-lollipop for short). See Fig.~\ref{pics} (middle).
 
 \end{enumerate}
 \begin{figure}
 \centering
 \begin{subfigure}[c]{0.62\textwidth}
 \includegraphics[width=\textwidth]{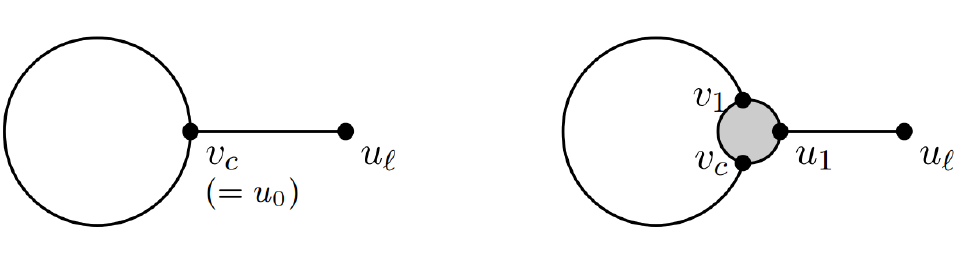}
 \end{subfigure} 
 \qquad \qquad
 \begin{subfigure}[c]{0.28\textwidth}
 \includegraphics[width=\textwidth]{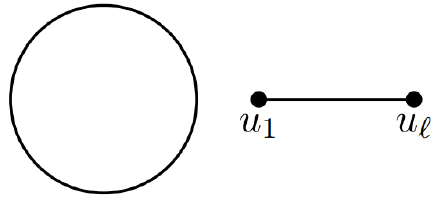}
 \end{subfigure}
 \caption{An o-lollipop, a p-lollipop, and a dcp-pair.}
 \label{pics}
 \end{figure}

 A lollipop  $(C,P)$  {\em is better} than a lollipop   $(C',P')$  if
  \begin{enumerate}
\item[(R1)]  $|V(C)|>|V(C')|$, or 
\item[(R2)] Rule (R1) does not distinguish $(C,P)$  from   $(C',P')$, and $|V(P)-V(C)|>|V(P')-V(C')|$; or

\item[(R3)] Rules (R1) and (R2) do not distinguish $(C,P)$  from   $(C',P')$, and the total number of vertices of $V(P)-V(C)$ contained in the  edges of $E(C)-E(P)$ counted with multiplicities is larger than the total number of vertices of $V(P')-V(C')$ contained in the  edges of $E(C')-E(P')$; or 
\item[(R4)] 
 Rules (R1)--(R3) do not distinguish $(C,P)$  from   $(C',P')$, and
 the number of edges in $E(P)-E(C)$ fully contained in $V(P)-V(C)$ is larger than the number of edges in $E(P')-E(C')$ fully contained in $V(P')-V(C')$. 
 \end{enumerate}

The criteria (R1)--(R4) define a partial ordering on the (finite) set of lollipops.
For $1\leq j\leq 4$, we will say that a lollipop is $j$-{\em good} if it is best among all lollipops according to the  rules (R1)--(Rj). For example,
a lollipop is $1$-good if the cycle in it is a longest cycle in $H$. Clearly if $i < j$ and a lollipop is $j$-good, then it is also $i$-good. We call a $4$-good lollipop a {\em best lollipop}.

Let  a lollipop $(C,P)$ be $j$-good for some $1\leq j\leq 4$. Say $C = v_1, e_1, \ldots, v_c, e_c, v_1$. If $(C,P)$ is a o-lollipop then let $P = u_0, f_0,u_1, \ldots, f_{\ell-1}, u_{\ell}$, where $u_0 = v_c$. If $(C,P)$ is a p-lollipop then let $P = f_0, u_1, f_1, \ldots, f_{\ell-1}, u_{\ell}$ where $f_0 = e_c$. With this notation, we have $|E(P)| = \ell$, $|V(P)| = \ell+1$ if $P$ is a Berge path, and $|V(P)|=\ell$ if $P$ is a partial Berge path. Assume $c < \min\{2k,|E(G)|,n\}$. 

A big part of the proof is to show that the case $\ell\geq k$ is impossible. After we prove this, we assume that $\ell\leq k-1$ and consider a somewhat weaker structure than lollipop, we call it a {\bf disjoint cycle-path pair} or {\em dcp-pair} for short. It is a pair $(C,P)$ of a cycle $C$ and a path $P$ without common edges and common defining vertices.
Similarly to lollipops, we say  that a dcp-pair $(C,P)$  {\em is better} than  another dcp-pair $(C',P')$  following the Rules (R1)--(R4) above. We say a dcp-pair $(C,P)$ is $j$-good for some $1 \leq j \leq 4$ if it is best among all dcp-pairs according to rules (R1)--(Rj).
A small simplification in the definition for dcp-pairs is that $V(P)-V(C)=V(P)$ and  $E(C)-E(P)=E(C)$.

With the help of dcp-pairs, we will show that there is 
a $2$-good lollipop $(C',P')$ with $|E(P)|\geq k$, thus obtaining the final contradiction.

So, the four big pieces of the proof are the following.

\begin{lemma}\label{nlem3}
Let $(C,P)$ be a best lollipop in $H$. If $|V(C)|=c$, $|E(P))|=\ell$ and
 $c < \min\{2k,n,|E(H)|\}$, 
then $\ell < k$.
\end{lemma}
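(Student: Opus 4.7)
The plan is a proof by contradiction: assume the lollipop $(C,P)$ is best with $\ell\ge k$ and $c<\min\{2k,n,|E(H)|\}$, and produce either a Berge cycle in $H$ longer than $c$ or a lollipop that outranks $(C,P)$ under one of $(R1)$--$(R4)$. The analysis focuses on the far endpoint $u_\ell$ of $P$ (the p-lollipop variant is handled with a minor reshuffling of the initial edge of $P$ and a choice between starting the new cycle at $v_c$ or $v_1$). Since $u_\ell\notin V(C)$, no edge of $C$ contains $u_\ell$, and only $f_{\ell-1}\in E(P)$ does, so the hypothesis $\delta(H)\ge k$ guarantees at least $k-1$ \emph{free} hyperedges $e$ at $u_\ell$, i.e., edges $e\ni u_\ell$ with $e\notin E(C)\cup E(P)$.

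First I localize the free edges inside $V(P)=\{v_c,u_1,\ldots,u_\ell\}$ by ruling out two alternatives for each free edge $e$. \emph{Extension}: if $e$ meets $V(H)\setminus(V(C)\cup V(P))$ at a vertex $w$, then appending the step $(e,w)$ to $P$ yields a lollipop $(C,P')$ with $|V(P')\setminus V(C)|=|V(P)\setminus V(C)|+1$, contradicting $(R2)$. \emph{Splicing with $C$}: if $e$ meets $V(C)\setminus\{v_c\}$ at some $v_j$, splice the longer of the two $C$-arcs from $v_j$ to $v_c$ (of length $L\ge\lceil c/2\rceil$) with $P$ through $e$; the resulting Berge cycle has $\ell+1+L\ge\ell+1+\lceil c/2\rceil$ distinct defining vertices, and since $\ell\ge k$ and $c\le 2k-1$ we have $\ell+1+\lceil c/2\rceil>c$, contradicting the maximality of $c(H)$ and $(R1)$. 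Thus every free edge at $u_\ell$ is contained in $V(P)$.

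The last stage is a P\'osa-type rotation. For each free edge $e\subseteq V(P)$ let $i(e)=\min\{i:u_i\in e\setminus\{u_\ell\}\}$; since $|e\cap\{u_0,\ldots,u_{\ell-1}\}|=r-1$ and $r\ge k-1$, one has $i(e)\le\ell-r+1\le\ell-k+2$. When $i(e)\ge1$, the rotated path $P^{\circ}=u_0,f_0,\ldots,u_{i(e)},e,u_\ell,f_{\ell-1},\ldots,f_{i(e)+1},u_{i(e)+1}$ has the same vertex set and length as $P$ but ends at $u_{i(e)+1}$; rerunning the extension/splicing analysis on the new endpoint forces its free edges to lie in $V(P)$ as well. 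A comparison of $(R4)$ before and after the rotation---adding the internal edge $e$ to $E(P)$ while removing $f_{i(e)}$---shows that the rotation strictly increases the $(R4)$ count unless $f_{i(e)}$ was already contained in $V(P)\setminus V(C)$, and the same comparison on (R3) shows (R3) is invariant; when $i(e)=0$, i.e., $v_c\in e$, the edge $e$ closes $P$ into the cycle $C^{\star}=v_c,f_0,u_1,\ldots,u_\ell,e,v_c$ of length $\ell+1$: if $\ell+1>c$ this violates $(R1)$, else we replace $C$ by $C^{\star}$ and use a long Berge arc of $C$ from $v_c$ as the new path, and compare with $(C,P)$ under $(R2)$--$(R4)$. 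Iterating the rotations and alternating with the cycle-swap $C\leftrightarrow C^{\star}$ produces a growing family of hyperedges trapped inside the small set $V(P)$ of size $\ell+1$, which eventually saturates the $(R3)$ or $(R4)$ count and yields the contradiction. The main obstacle is the careful bookkeeping in this last stage when $c$ is close to $2k-1$: since no single rotation produces a cycle longer than $c$, $(R1)$ is never invoked directly, and the proof must track $(R2)$--$(R4)$ through both the rotations and the cycle-swap simultaneously.
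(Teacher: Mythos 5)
The proposal breaks down at the very first counting step, and the gap is not cosmetic.

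You assert that ``since $u_\ell\notin V(C)$, no edge of $C$ contains $u_\ell$, and only $f_{\ell-1}\in E(P)$ does,'' and deduce that $u_\ell$ lies in at least $k-1$ ``free'' hyperedges outside $E(C)\cup E(P)$. This is false on both counts. The hyperedges $e_i$ and $f_m$ have $r$ vertices, only two of which are the defining vertices of the Berge cycle/path; the other $r-2$ can be anything, including $u_\ell$. In particular $u_\ell$ can appear as a non-defining vertex in many $f_m$, and the paper's set $S_2=\{u_m: u_\ell\in f_m,\ u_m\notin S_1\}$ exists precisely to track these occurrences. The actual situation is the opposite of what you need: in the paper's Corollary~\ref{smalldeg}, $d_{C-P}(u_\ell)=0$ (this itself requires the long argument of Claim~\ref{allneighbors}, not the one-line deduction you give), $d_P(u_\ell)\le k-1$, and $d_{H'}(u_\ell)\le 1$. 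So there is at most \emph{one} free edge at $u_\ell$, not $k-1$. Your ``localization'' and ``splicing'' steps (which are themselves essentially Claims~\ref{allneighbors} and~\ref{shortpath}) are fine as far as they go, but they cannot get started because the family of free edges they are supposed to corral is nearly empty. The real difficulty --- showing that the many \emph{path} edges containing $u_\ell$ cannot accommodate the degree hypothesis --- is not addressed.

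The subsequent P\'osa-rotation-plus-cycle-swap stage also does not close the argument. You gesture at iterating rotations and tracking (R2)--(R4), but no invariant is exhibited that actually terminates or produces a contradiction; the sentence ``eventually saturates the (R3) or (R4) count'' is not a proof. The paper's route is genuinely different: it applies a Dirac-type lemma (Lemma~\ref{diraclemma}) to the incidence bipartite graph $I_H$ to manufacture two disjoint, aligned Berge (or partial Berge) paths from $C$ into $P$, and then combines them with the segments of $C$ and $P$ to build an explicit cycle $C'$ whose length can be estimated against $|S_1\cup S_2|$. That two-path input from $2$-connectivity is the load-bearing idea, and nothing in your proposal plays that role; a purely local rotation argument around $u_\ell$ does not exploit $2$-connectedness, which is essential here (the lemma is false without it).
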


\begin{lemma}\label{nlem2}
%
Let $(C,P)$ be a best lollipop in $H$ with $|V(C)|=c$, $|E(P))| < k$ and
 $c < \min\{2k,n,|E(H)|\}$. Then every $2$-good dcp-pair $(C', P')$ in $H$ has $|V(P')| < k$. 
\end{lemma}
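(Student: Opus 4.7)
The plan is to derive a contradiction by assuming a $2$-good dcp-pair $(C',P')$ with $|V(P')|\ge k$, and converting $(C',P')$ into a lollipop that strictly beats the best lollipop $(C,P)$ under the ordering (R1)--(R4). Set $\ell':=|E(P')|\ge k-1$ and write $P'=u_0,f_0,u_1,\ldots,f_{\ell'-1},u_{\ell'}$.

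First I would argue $|V(C')|=c$: since $c<n$ and $c<|E(H)|$, a longest cycle $C$ of $H$ should extend to a dcp-pair via 2-connectivity (by finding two vertices outside $V(C)$ that share an edge not in $E(C)$), so (R1)-optimality of $(C',P')$ among dcp-pairs forces $|V(C')|\ge c$, and hence $|V(C')|=c$. Next, I would use 2-connectivity of the incidence graph $I_H$ via Menger's theorem or the fan lemma to produce a Berge path $R$ from the endpoint $u_0$ of $P'$ to some $b\in V(C')$ with $E(R)\cap(E(C')\cup E(P'))=\emptyset$ and with the internal defining vertices of $R$ disjoint from $V(C')\cup V(P')$. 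The degree bound $d_H(u_0)\ge k$, combined with the fact that only $f_0\in E(P')$ contains $u_0$ and hence at least $k-1$ edges at $u_0$ lie outside $E(P')$, gives the slack needed for this Menger argument.

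Given such an $R$, form the lollipop $(C',P'')$ where $P''$ starts at $b$, follows $R$ to $u_0$, and then traverses all of $P'$ to $u_{\ell'}$. Then $|E(P'')|=|E(R)|+\ell'\ge 1+(k-1)=k$, and because $V(P')\cap V(C')=\emptyset$ and $R$ meets $V(C')$ only at $b$, we have $|V(P'')\setminus V(C')|=|E(P'')|\ge k$. Comparing with the best lollipop $(C,P)$: rule (R1) is a tie because $|V(C')|=|V(C)|=c$, while rule (R2) gives $|V(P'')\setminus V(C')|\ge k>k-1\ge|V(P)\setminus V(C)|$ (the last inequality uses $|E(P)|<k$). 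So $(C',P'')$ strictly beats $(C,P)$, contradicting the fact that $(C,P)$ is a best lollipop.

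The main obstacle is the Menger step: forcing the endpoint of $R$ on $P'$ to be exactly $u_0$, rather than some interior vertex $u_i$. If $R$ were forced to end at $u_i$ with $1\le i\le \ell'-1$, the usable segment of $P'$ in $P''$ would be only $\max(i,\ell'-i)$ edges, possibly as small as $\lceil\ell'/2\rceil$, which in general does not exceed $|V(P)\setminus V(C)|\le k-1$. I expect this to be handled by working inside $I_H-E(P')-E(C')-(V(P')\setminus\{u_0\})$ and verifying via 2-connectivity and $\delta(H)\ge k$ at $u_0$ that $u_0$ still reaches $V(C')$ there. A secondary subtlety is that the Step~1 argument fails when the longest cycle $C$ admits no disjoint path (i.e.\ every edge outside $E(C)$ has at most one vertex outside $V(C)$); in that case, a separate argument is needed, likely building a cycle $\tilde C$ from two internally disjoint connectors joining $u_0$ and $u_{\ell'}$ to $V(C')$, of length at least $\lceil|V(C')|/2\rceil+k+1$, and arguing this exceeds $c$ so as to contradict the (R1)-optimality of $C$ directly.
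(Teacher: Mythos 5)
Your overall architecture is right — use a connecting path to turn the dcp-pair $(C',P')$ into a lollipop on the same cycle, then beat $(C,P)$ under rules (R1)--(R4). You also correctly note $|V(C')|=c$ (in fact this is immediate: a single vertex outside a longest cycle already gives a dcp-pair, so 1-goodness forces $|V(C')|=c(H)=c$; the ``two vertices sharing an edge'' detour isn't needed). But you have located the real difficulty and then not resolved it. The Menger step as described does not work: forcing the connecting path $R$ to land at the \emph{endpoint} $u_0$ of $P'$ is not a consequence of $2$-connectivity of $I_H$. You propose to delete $V(P')\setminus\{u_0\}$ and the edges $E(P')\cup E(C')$ from $I_H$ and invoke $\delta(H)\geq k$; but $2$-connectivity survives only a single vertex deletion, and there is no reason $u_0$ remains connected to $V(C')$ after this large deletion — every edge at $u_0$ outside $E(P')$ could be entirely contained in $V(P')$, in which case $u_0$ is cut off. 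So the key step is a genuine gap, not a technical wrinkle.

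The paper takes a more robust route. It picks a shortest path $R$ from $V(C')$ to $V(P')$ (not from $V(C')$ to $u_0$), which by minimality is automatically internally disjoint from both. Say $R$ lands at $u_j$. If $j$ is large, the segment $P'[u_j,u_1]$ (or its mirror) already has enough vertices. If $j$ is small, the paper exploits $\delta(H)\geq k$ at the \emph{endpoint} $u_1$ of $P'$: since no edge of $C'$ and (after maximizing $|V(P')|$) no edge of $H'$ outside $V(P')$ may contain $u_1$, the vertex $u_1$ is forced into at least $k$ edges $f_i$ of the path (or into an $H'$-edge inside $V(P')$). Using the largest-indexed such edge, the path can be re-routed $u_j,\dots,u_1,f_{j'},u_{j'},\dots,u_{j+1}$, sweeping up $j'\geq k$ vertices and avoiding $f_j$. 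A small case analysis ($d_{H'}(u_1)=0$, $\geq 2$, or $=1$ with $h=\{u_1,\dots,u_r\}$) covers all possibilities. In short: rather than forcing the connector to land at $u_0$ — which cannot be guaranteed — let it land where it will and then use the degree at the endpoint of $P'$ to build a long lollipop tail from wherever it landed. Your proposal would need exactly this extra rerouting idea to close the gap.
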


\begin{lemma}\label{nlem0}
Let $(C,P)$ be a $3$-good dcp-pair in $H$. If $|V(C)|=c$, $|V(P))|=\ell$ and
 $c < \min\{2k,n,|E(H)|\}$, 
then $\ell \geq 2$.
\end{lemma}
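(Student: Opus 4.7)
We argue by contradiction. Suppose $(C,P)$ is a $3$-good dcp-pair with $c<\min\{2k,n,|E(H)|\}$ and $\ell=|V(P)|\le 1$. Every dcp-pair $(C',P')$ satisfies $V(P')\subseteq V(H)\setminus V(C')$, and $c<n$ forces $T:=V(H)\setminus V(C)\ne\emptyset$. The case $\ell=0$ is ruled out, because for any $u\in T$ the pair $(C,u)$ has $|V(u)\setminus V(C)|=1>0$ and so beats $(C,P)$ under rule R2. Hence $\ell=1$ and $V(P)=\{u\}$ for some $u\in T$; it suffices to exhibit a dcp-pair $(C',P')$ with either $|V(C')|>c$ or $|V(C')|=c$ and $|V(P')|\ge 2$, contradicting $3$-goodness.

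From $c<|E(H)|$ pick $f\in E(H)\setminus E(C)$. If $|f\cap T|\ge 2$, then for any $w_1,w_2\in f\cap T$ the Berge path $P':=w_1,f,w_2$ is disjoint in defining vertices and edges from $C$, so $(C,P')$ is a dcp-pair with $|V(P')|=2$: contradiction. We may therefore assume every $f\in E(H)\setminus E(C)$ satisfies $|f\cap T|\le 1$. Now if some cycle-edge $e_i\in E(C)$ satisfies $|e_i\cap T|\ge 2$ with $w_1,w_2\in e_i\cap T$, I rotate $e_i$ out of $C$ using $2$-connectedness of $I_H$: since $I_H-e_i$ is connected, there is a Berge path $Q$ from $v_i$ to $v_{i+1}$ in $H-e_i$, and splicing $Q$ into $C$ in place of $e_i$ gives a Berge cycle $C'$ with $|V(C')|\ge c$ and $e_i\notin E(C')$; then $(C',\,w_1,e_i,w_2)$ is the desired contradicting pair. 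The main subtlety is to arrange that the internal defining vertices of $Q$ lie outside $V(C)\setminus\{v_i,v_{i+1}\}$, so the splice preserves $|V(C')|\ge c$ and keeps $w_1,w_2$ outside $V(C')$; here rule R3 maximality of $u$ furnishes the flexibility to route $Q$ through $T$.

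The residual case is that no edge of $H$ meets $T$ in two vertices. Fix $w\in T$ (taking $w\ne u$ if $|T|\ge 2$, and $w=u$ if $|T|=1$). All of $w$'s $\ge k$ incident hyperedges then have their remaining $r-1$ vertices inside $V(C)$. Let $E^w$ denote the set of edges of $H$ containing $w$. Double counting gives $(r-1)|E^w|\ge(r-1)k\ge 2k>c$, so some vertex of $V(C)$ lies in two distinct edges of $w$; meanwhile $\binom{\lfloor c/2\rfloor}{r-1}\le\binom{k-1}{r-1}<k$ (valid because $r\ge k-1$ gives $\binom{k-1}{r-1}\le k-1$), so the neighborhood $N(w)\cap V(C)$ cannot be independent in the cycle graph on the defining vertices of $C$, i.e.\ it contains two consecutive defining vertices $v_i,v_{i+1}$ of $C$. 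Combining these two pigeonhole facts, I can select two distinct hyperedges of $w$ covering $v_i$ and $v_{i+1}$ respectively and lying in $\{e_i\}\cup(E(H)\setminus E(C))$, so that inserting $w$ between $v_i$ and $v_{i+1}$ using them produces a Berge cycle $C''$ of length $c+1$. If $|T|\ge 2$, this gives the dcp-pair $(C'',u)$ with $|V(C'')|>c$, contradicting R1; if $|T|=1$, this is a Hamilton Berge cycle in $H$, which contradicts \eqref{2kn} directly since $n\le\min\{2k,n,|E(H)|\}$ in this sub-case. The principal technical obstacle throughout is the edge-distinctness bookkeeping during the rotation and insertion steps; the slack afforded by $\delta(H)\ge k\ge 3$, $r\ge 3$, and $c<2k$ should make each verification routine.
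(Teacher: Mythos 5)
Your proof has two genuine gaps, both in the reduction to the ``residual case'' and in the final insertion step.

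The rotation step (when $e_i\in E(C)$ has $|e_i\cap T|\ge 2$) is not established: $2$-connectedness of $I_H$ gives a Berge path $Q$ from $v_i$ to $v_{i+1}$ avoiding the vertex $e_i$ of $I_H$, but nothing controls where $Q$ goes. Its internal defining vertices may lie densely in $V(C)$, and splicing $Q$ into $C$ then either fails to produce a Berge cycle (repeated defining vertices) or produces a shorter one, so the ``$|V(C')|\ge c$'' claim is unjustified. Your appeal to R3-maximality of $u$ for ``routing $Q$ through $T$'' is vacuous: R3-maximality of $u$ says only that $u$ maximizes $d_C(\cdot)$ among vertices outside $C$; it carries no information about the existence or shape of $v_i,v_{i+1}$-paths. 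In fact, the paper never attempts or needs this reduction; it establishes only the weaker fact~\eqref{1c} (for edges \emph{not} in $E(C)$), which is exactly what R2-maximality gives, and works with that.

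The insertion in the ``residual case'' is also broken, and the failure is precisely the case the paper spends most of its effort on (Case~3 of the paper's proof, $d_{H'}(u_1)=0$). Your two pigeonhole observations are each fine, but they do not combine to yield two distinct edges $g\ni\{w,v_i\}$, $g'\ni\{w,v_{i+1}\}$ both in $\{e_i\}\cup\bigl(E(H)\setminus E(C)\bigr)$. Concretely, suppose every edge of $H$ containing $w$ lies in $E(C)$ (say $w\in e_1,\dots,e_k$ consecutively). Then $N(w)\cap V(C)=\{v_1,\dots,v_{k+1}\}$ contains consecutive pairs and plenty of doubly-covered vertices, but every edge through $\{w,v_j\}$ is an edge of $C$, so the constraint forces $g=g'=e_i$ and no insertion is possible. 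The paper handles this scenario via the chain $c\ge 2a+b$ from Claim~\ref{bigsmallcycle}, the bound~\eqref{abr}, and the long Case~3 analysis (including~\eqref{d(w)}, the degree-sum count~\eqref{c+x}, and Subcases~3.1/3.2 with the careful re-choice of $C$), none of which is reproduced or replaceable by your quick insertion. So the proposal is not a valid proof; the hard cases of the lemma are exactly where it breaks.
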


\begin{lemma}\label{nlem1}
Let $(C,P)$ be a $4$-good dcp-pair in $H$. If $|V(C)|=c$, $|V(P))|=\ell$ and
 $c < \min\{2k,n,|E(H)|\}$, 
then $\ell \geq k$.
\end{lemma}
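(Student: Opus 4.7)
My plan is to argue by contradiction. Suppose the $4$-good dcp-pair $(C,P)$ has $\ell = |V(P)| < k$, and write $P = u_1, f_1, u_2, \ldots, f_{\ell-1}, u_\ell$; by Lemma~\ref{nlem0} we have $\ell \geq 2$, and by definition of a dcp-pair $V(C) \cap V(P) = \emptyset$.

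First I would establish an ``extension restriction'' at the endpoints $u_1,u_\ell$: any hyperedge $e$ containing $u_1$ with $e \notin E(C) \cup E(P)$ must satisfy $e \subseteq V(C) \cup V(P)$. Otherwise, a witness $w \in e \setminus (V(C) \cup V(P))$ would yield the longer Berge path $P' = w, e, u_1, f_1, \ldots, u_\ell$, and the dcp-pair $(C, P')$ would contradict the $2$-goodness of $(C, P)$ via rule (R2). Since $\ell < k \leq r + 1$ gives $r \geq \ell$, any such hyperedge $e$ either equals $V(P)$ (possible only if $r = \ell = k-1$, and for at most one hyperedge of $H$) or meets $V(C)$. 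Combined with $d_H(u_1) \geq k$, this produces at least $k - 2$ hyperedges at $u_1$ which ``attach'' $u_1$ to a vertex of $C$: as a side vertex of a hyperedge in $E(C)$ (call this a \emph{type-II} attachment, granting access to both $v_j$ and $v_{j+1}$ for each $e_j \in E(C)$ containing $u_1$), or via a free hyperedge meeting $V(C)$ (a \emph{type-III} attachment). The same holds for $u_\ell$.

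Next I would splice $P$ into $C$ using one attachment of $u_1$ at a position $j$ and one of $u_\ell$ at a position $j'$. An arc of $C$ between positions $j$ and $j'$, chosen to avoid the two attaching hyperedges, together with $P$ forms a Berge cycle; a direct calculation shows this cycle has length strictly greater than $c$ whenever the circular distance between $j$ and $j'$ on $C$ is at most $\ell$, or at least $c - \ell$. The cleanest instance is when $u_1 \in e_j$ and $u_\ell \in e_{j+1}$, both in $E(C)$: the spliced Berge cycle
\[
v_j,\, e_j,\, u_1,\, f_1,\, u_2,\, \ldots,\, u_\ell,\, e_{j+1},\, v_{j+2},\, e_{j+2},\, v_{j+3},\, \ldots,\, v_{j-1},\, e_{j-1},\, v_j
\]
has length $c + \ell - 1 \geq c + 1$, contradicting the $1$-goodness of $C$. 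If no such ``close'' attachment pair exists, then the attachment positions of $u_1$ and $u_\ell$ on $C$ lie in disjoint ``bands'' separated by circular distances in $(\ell, c - \ell)$. A pigeonhole argument based on the $\geq k - 2$ attachments per endpoint together with $c < 2k$ rules out this configuration, or else allows us to rotate $P$ (swapping an endpoint for another vertex of the attachment region) to reach a dcp-pair that does admit a close attachment pair, producing the contradiction.

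The main obstacle I foresee is the degenerate regime $r = \ell = k-1$, in which free hyperedges at $u_1$ (or $u_\ell$) may lie entirely inside $V(P)$, weakening the ``every free hyperedge attaches to $V(C)$'' step. In that regime, most hyperedges at $u_1$ must be of type II, and one must invoke the finer rules (R3) and (R4) of $4$-goodness to control how vertices of $V(P)$ can appear inside hyperedges of $E(C)$. One must also verify at each step that the constructed Berge cycle uses pairwise distinct hyperedges, ruling out collisions among the attaching hyperedges $e_j, e_{j'}$, the $f_i$'s, and the hyperedges of the used arc of $C$.
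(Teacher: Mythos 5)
Your overall strategy --- use Claim~\ref{shortpath}-type separation to force the attachments of $u_1$ and $u_\ell$ on $C$ to be spread out, then count --- is the same skeleton as the paper's, but the two steps you defer to a ``pigeonhole argument'' and a ``rotation'' are exactly where the proof lives, and as sketched they do not close. The first gap: you never address the case where a single $H'$-edge $g$ contains both $u_1$ and $u_\ell$. Then $P$ closes into a Berge cycle, and neither endpoint need attach to $C$ at all, so your count of at least $k-2$ attachments per endpoint evaporates. The paper handles this by enlarging the class of structures to disjoint cycle-cycle pairs, adding the ordering rule (S4) that prefers a cycle over a path, and running a separate six-case analysis for best dcc-pairs (exploiting that in a cycle any consecutive pair of defining vertices can play the role of $u_1,u_\ell$); this is roughly half of the proof and has no counterpart in your outline.

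The second gap is that the pigeonhole step fails in the coincidence case where the attachment sets of $u_1$ and $u_\ell$ are equal. If $B_1=B_\ell$ is the common set of edges of $C$ containing $u_1$ and $u_\ell$, the separation constraint applies only to \emph{distinct} members of this single set, and one gets only $c\geq \ell\,|B_1|$; with $\ell=2$ and $|B_1|=k-1$ this is $c\geq 2k-2$, perfectly consistent with $c=2k-1$. The paper's Case 2 for dcp-pairs shows this situation forces $V(P)\subseteq f_j$ for every $j$ and $V(P)\subseteq e_i$ for every $e_i\in B_1$, then analyzes the vertices of $\bigcup_j f_j$ lying outside $V(C)\cup V(P)$ and invokes a refined counting statement (Claim~\ref{ver-ed2}) to finish. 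Your ``rotate $P$'' remark is the right instinct (the paper rotates via the paths $P_i$), but rotation escapes the coincidence case only when some internal vertex has an $H'$-neighbor; otherwise every rotation lands back in the same configuration. The analogous coincidence $A_1=A_\ell$ of $H'$-neighborhoods also survives the naive count in your ``type-III'' case and needs further structure ($k=r+1$, $|A_1|=r-\ell+1$, both endpoints lying in one common edge of $C$) before a contradiction appears. Until these two situations are handled, the argument is incomplete.
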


To prepare the proofs of Lemmas~\ref{nlem3}--\ref{nlem1}, in the next  section we 
describe some useful properties of good  lollipops and dcp-pairs. After that, in the next four sections we prove Lemmas~\ref{nlem3}--\ref{nlem1}.




\section{Simple properties of best lollipops}\label{simple}
In this subsection we consider $j$-good lollipops  and dcp-pairs $(C,P)$  and prove some basic claims to be used throughout the rest of the paper. We start from a simple but useful observation on graph cycles.

\begin{claim}\label{longQ}  Suppose $c < 2k$ and $C=v_1,e_1,v_2,\ldots,v_c,e_c,v_1$ is a graph cycle. Let $1\leq i<j\leq c$.
\begin{enumerate}
\item[(a)] The longer of the two subpaths of $C$ connecting 
$\{v_{i},v_{i+1}\}$ with $\{v_{j}, v_{j+1}\}$ and using neither 
 $e_i$ nor  $e_j$  has at least $\lceil c/2 \rceil $ vertices. 
 In particular, this path omits at most $k-1$ vertices in $C$. We call it a {\em long $e_i, e_j$-segment of $C$}.

\item[(b)] The longer of the two subpaths of $C$ connecting 
$\{v_{i},v_{i+1}\}$ with $v_{j}$ and not using 
 $e_i$  has at least $\lceil (c+1)/2 \rceil $ vertices. This path omits at most $k-1$ vertices in $C$. 
We call it a {\em long $e_i, v_j$-segment of $C$}.

\item[(c)] The longer of the two subpaths of $C$ connecting 
$v_{i}$ with $v_{j}$   has at least $\lceil (c+2)/2 \rceil $ vertices. In particular, this path omits at most $k-2$ vertices in $C$. We call it a {\em long $v_i, v_j$-segment of $C$}.
\end{enumerate}
\end{claim}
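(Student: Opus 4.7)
The claim is an elementary vertex-counting statement about a single graph cycle, so my plan is to treat all three parts with the same uniform idea: after fixing the cut(s) specified in the statement, $C$ decomposes into exactly two arcs whose vertex sets cover $V(C)$ and whose intersection is precisely the vertices at the cuts shared by both arcs. The longer arc then contains at least half of this total multiset of vertices. The three parts differ only by the sizes of the overlaps: $0$ shared vertices in (a), $1$ in (b), and $2$ in (c); together with the hypothesis $c\leq 2k-1$, this immediately yields the claimed bounds.

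For part (a), I would delete the two edges $e_i$ and $e_j$ from $C$, leaving two vertex-disjoint subpaths with endpoint pairs $\{v_{i+1},v_j\}$ and $\{v_{j+1},v_i\}$ whose vertex sets partition $V(C)$. The longer has at least $\lceil c/2\rceil$ vertices, so the shorter, which contains exactly the vertices omitted by the longer, has at most $\lfloor c/2\rfloor\leq k-1$ vertices. For part (b), I would delete only $e_i$, producing a Hamilton path $P$ of $C$ on which $v_j$ lies; the vertex $v_j$ splits $P$ into two subpaths overlapping only in $v_j$, so their vertex counts sum to $c+1$. The longer has at least $\lceil(c+1)/2\rceil$ vertices, and the vertices of $C$ it omits are precisely the vertices of the shorter subpath other than $v_j$, numbering at most $\lfloor(c+1)/2\rfloor-1=\lfloor(c-1)/2\rfloor\leq k-1$. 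For part (c), the two arcs of $C$ from $v_i$ to $v_j$ overlap in both endpoints $v_i$ and $v_j$, so their vertex counts sum to $c+2$; the longer has at least $\lceil(c+2)/2\rceil$ vertices, and the omitted vertices are those of the shorter arc other than $v_i,v_j$, numbering at most $\lfloor(c+2)/2\rfloor-2\leq k-2$.

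There is no genuine obstacle here, since the entire content is pigeonhole on a cycle and the bound $c<2k$ was chosen so that $\lfloor c/2\rfloor\leq k-1$. The only easy pitfall, and really the only thing the write-up needs to be careful about, is the bookkeeping of the shared endpoints at the cuts: this is exactly what separates the three bounds on the \emph{number of omitted vertices} in (a)--(c), even though the three lower bounds on the \emph{length of the long segment} look essentially the same.
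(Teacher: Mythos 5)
Your proof is correct, and the pigeonhole bookkeeping in each part checks out: the omitted vertices of the long segment are exactly the vertices of the shorter arc minus the shared cut-vertices ($0$, $1$, or $2$ of them in parts (a), (b), (c) respectively), giving at most $\lfloor c/2\rfloor$, $\lfloor(c-1)/2\rfloor$, and $\lfloor(c+2)/2\rfloor-2$ omitted vertices, each bounded as claimed once $c\leq 2k-1$. The paper states this claim without proof, treating it as an elementary observation, so there is no authorial argument to compare against; your write-up supplies exactly the natural argument one would expect.
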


We will need the following useful notions.
Given an $r$-graph $H$ and a  lollipop or a dcp-pair $(C,P)$ in $H$, let 
 $H'=H'(C,P)$ denote the subhypergraph of $H$ with $V(H') = V(H)$ and $E(H') = E(H) - E(C) - E(P)$.


\begin{claim}\label{bigsmallcycle} Let $(C,P)$ be a $1$-good lollipop or a $1$-good dcp-pair in $H$. For each $1\leq i\leq c$ and $1\leq m\leq \ell$, if some edge $g\notin E(C)$ contains $\{u_{m},v_i\}$, then\\
(a) neither $e_{i-1}$ nor $e_i$ intersect $V(P)-u_0$, and\\
(b) if $(C,P)$ is $3$-good, then no edge in $H'$ intersects both  $V(P)-u_0$ and 
$\{v_{i-1},v_{i+1}\}$ (indices count modulo $c$).
In particular, the set $N_{H'}(V(P)-u_0)\cap V(C)$ does not contain two consecutive vertices of $C$.
\end{claim}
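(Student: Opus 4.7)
The plan is to prove both parts by contradiction, in each case exhibiting a lollipop or dcp-pair that beats $(C,P)$ under one of the rules (R1)--(R3). The common device is a splicing move: use $g$ as a bridge from $v_i$ to $u_m$, walk along a segment of $P$, and then close back to $C$ through an edge that carries a $P$-vertex together with an adjacent $C$-vertex.

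For part (a), suppose toward contradiction that some $P$-vertex $u_j$ with $j \geq 1$ lies in $e_i$ (the case $u_j \in e_{i-1}$ is symmetric, via the reverse traversal of $C$). Build a Berge cycle $C^*$ that starts at $v_{i+1}$, walks around $C$ back to $v_i$ using the edges $e_{i+1}, e_{i+2}, \ldots, e_{i-1}$, uses $g$ to jump to $u_m$, walks along $P$ from $u_m$ to $u_j$, and closes via $e_i$ reprojected to $\{u_j, v_{i+1}\}$. Since $V(P) - u_0$ is disjoint from $V(C)$ and $u_j$ lies in $V(P) - u_0$, $C^*$ has at least one more defining vertex than $C$, contradicting the $1$-goodness of $(C,P)$. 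The only caveat is edge-distinctness; this is immediate unless $g$ coincides with one of the $f$-edges of $P$ traversed between $u_m$ and $u_j$, a subcase handled by a trivial reassignment (keeping $g$ in its original $P$-role and shortening the $P$-segment).

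For part (b), assume $(C,P)$ is $3$-good and $e' \in E(H')$ contains some $u_j$ with $j \geq 1$ and $v_{i+1}$ (the $v_{i-1}$ case is symmetric). If $e' \neq g$, repeat the splicing argument of (a) with $e'$ in the role of $e_i$: walk around $C$ from $v_{i+1}$ to $v_i$ using all edges of $C$, jump via $g$ to $u_m$, traverse $P$ to $u_j$, and close via $e'$ reprojected to $\{u_j, v_{i+1}\}$. This already yields a Berge cycle of length at least $c+1$, contradicting $1$-goodness without invoking $3$-goodness. The delicate case is $e' = g$, so $g \supseteq \{u_m, v_i, u_j, v_{i+1}\}$ and we cannot use $g$ simultaneously as both bridges. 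Here part (a) --- valid under $1$-goodness --- ensures $e_i$ contains no vertex of $V(P) - u_0$. Form $C'$ by reprojecting $g$ to $\{v_i, v_{i+1}\}$ and replacing $e_i$ by $g$ in $C$. Then $(C', P)$ has the same cycle length and the same $|V(P) - V(C)|$ as $(C,P)$, yet the new edge $g \in E(C') - E(P)$ contains $u_m$ (and also $u_j$, if $m \neq j$) from $V(P) - V(C)$, while the removed edge $e_i \in E(C) - E(P)$ contained no such vertex. Hence the count in rule (R3) strictly increases, contradicting $3$-goodness.

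The ``In particular'' statement follows immediately: if two consecutive $C$-vertices $v_i, v_{i+1}$ were both $H'$-neighbors of $V(P) - u_0$, the witnessing edge at $v_i$ supplies $g$ for the hypothesis of (b), and the witnessing edge at $v_{i+1}$ then contradicts (b)'s conclusion. The main obstacle I anticipate is the combinatorial bookkeeping of degenerate subcases: the coincidence $g \in E(P)$ in part (a), the essential (and somewhat subtle) appeal to rule (R3) rather than (R1) when $e' = g$ in part (b), and verifying that $(C',P)$ remains a valid lollipop or dcp-pair of the same type as $(C,P)$ --- particularly in the p-lollipop case when the swap touches the shared edge $e_c = f_0$.
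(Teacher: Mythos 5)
Your proof mirrors the paper's: a splice-and-lengthen argument for (a), and for (b) the same splice when $e' \neq g$ plus a rule-(R3) edge swap when $e' = g$.

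The one point you flag but do not resolve --- the p-lollipop subcase in which the swap would remove the shared edge $e_c = f_0$ --- is not actually a gap once examined. If $(C,P)$ is a p-lollipop and the edge to be swapped out is $e_c$ (so $i=c$ in your orientation, or $i=1$ in the symmetric one), then part (a) applied to the claim's own hypothesis $g \supset \{u_m, v_i\}$ would force $e_c$ to avoid $V(P)-u_0$; but $f_0 = e_c$ always contains $u_1 \in V(P)-u_0$, so no such $g$ can exist and the case is vacuous. (The paper instead handles $i=1$ for both lollipop types by exhibiting a longer cycle directly: bridge from $v_c$ to $u_1$ via $f_0$, follow $P$ to $u_j$, and close to $v_1$ via $g$.) Once you note this, the remaining check that the swapped pair is still a valid lollipop or dcp-pair for the (R3) comparison is routine --- in particular for an o-lollipop the swap never disturbs $E(C)\cap E(P)=\emptyset$, and for a dcp-pair there is no shared edge at all. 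You should either record the vacuity observation or give the direct longer-cycle construction rather than leaving it as an ``anticipated obstacle''; with that done, your argument is complete and agrees with the paper's in substance.
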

\begin{proof} Let $g \notin E(C)$ contain $\{u_m, v_i\}$.  If $g \in E(P)$, say $g = f_q$, then we may assume $u_m = u_{q+1}$. 

Suppose $ e_{i-1}$ contains $u_j$ for some $1\leq j\leq \ell$. If either $j \geq m$ or $g \neq f_m$, then we may replace the segment $v_{i-1}, e_{i-1}, v_i$ in $C$ with the path $v_{i-1}, e_{i-1}, u_{j},P[u_j,u_m],u_m, g, v_{i}$. Otherwise we replace the segment with the path $v_{i-1}, e_{i-1}, u_j, P[u_j, u_{m-1}], u_{m-1}, g, v_i$. We obtain a longer cycle, contradicting the choice of $C$. The case with $u_{j} \in e_i$ is symmetric. This proves (a).

Suppose now some $e\in E(H')$ contains $\{u_{j},v_{i-1}\}$ for some $1\leq j\leq \ell$ (the case when $e\supset \{u_{j},v_{i+1}\}$ is symmetric).
If $e\neq g$, then as in the proof of (a) we may replace the segment $v_{i-1}, e_{i-1}, v_i$ in $C$ with $v_{i-1}, e, u_{j},P[u_j,u_m],u_m, g, v_{i}$ or $v_{i-1}, e, u_j, P[u_j, u_{m-1}], u_{m-1}, g$
 to get a longer cycle.

If $e = g$, then by (a), $ e_{i-1}\cap (V(P)-u_0)=\emptyset$. Note that by the case $g=e \in E(H')$. Let $C'$ be obtained from $C$ by
 replacing the edge $e_{i-1}$ with $g$ and let $P'=P$. If $i\neq 1$ or $(C,P)$ is a dcp-pair, 
 then  $(C',P')$ is better than  $(C,P)$ by Rule (R3).  If $i= 1$ and $(C,P)$ is an o-lollipop or a p-lollipop, 
 then the cycle obtained from $C$ by replacing subpath $v_c,e_c,v_1$ with the path $v_c,f_1,u_1,\ldots,u_j,g,v_1$ is longer than $C$.
\end{proof}


\begin{claim}\label{shortpath}  Let $(C,P)$ be a $1$-good lollipop or a $1$-good dcp-pair in $H$. 
For $1\leq q \leq \ell-1$ and $1 \leq i,j \leq c$, the following hold:

(i) If $u_q \in e_i$ and $u_{\ell} \in e_j$ then $j=i$ or $|j-i| \geq \ell - q + 1$.

(ii) If $\{v_i, u_q\} \subset e$ for some edge $e \in (E(H') \cup \{f_0\})-E(C)$ and if $u_{\ell} \in e_j$, then either $j > i$ and $j-i \geq \ell - q+1$, or $i > j$ and $i-j \geq \ell-q+2$.

(iii) If there exist distinct edges $e, f \in (E(H') \cup \{f_0\})-E(C)$ such that $\{v_i, u_q\} \subset e$ and $\{v_j, u_{\ell}\} \subset f$, then $j=i$ or $|j-i| \geq \ell - q + 2$. 
%

\end{claim}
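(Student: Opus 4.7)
The plan is to prove all three parts by a single cycle-rerouting technique. In each scenario I would splice the subpath $u_q, f_q, u_{q+1}, \ldots, f_{\ell-1}, u_\ell$ of $P$ into $C$ using the given incidences at $v_i$ and $v_j$, then close up along an arc of $C$ to obtain a Berge cycle $C^*$. Since $(C,P)$ is $1$-good, $|V(C)|=c$ is the length of a longest Berge cycle in $H$, so $|V(C^*)| \le c$; translating this into an arithmetic inequality on the indices yields the claimed bound on $|j-i|$.

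Concretely, in each part I would construct one candidate cycle for each of the two directions of traversal of $C$ between the splice points. In part (i), both splice edges $e_i, e_j$ lie in $E(C)$ and are thus consumed, so $C \setminus \{e_i, e_j\}$ splits into two arcs with $j-i$ and $c-(j-i)$ vertices (WLOG $j>i$); the corresponding candidates have lengths $(j-i) + (\ell-q+1)$ and $(c-(j-i)) + (\ell-q+1)$, and imposing $\le c$ on each yields the desired $j-i \ge \ell-q+1$. In part (iii) neither splice edge lies in $E(C)$, so both full arcs of $C$ from $v_i$ to $v_j$ (with $j-i+1$ and $c-(j-i)+1$ vertices) remain available, producing candidates of lengths $(j-i+1) + (\ell-q+1)$ and $(c-(j-i)+1) + (\ell-q+1)$ and hence the stronger $+2$ bound. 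Part (ii) is the asymmetric hybrid: $e_j$ consumes one edge of $C$ but $e$ does not, so the useful construction in each of the subcases $i<j$ and $i>j$ is different, yielding respectively $j-i \ge \ell-q+1$ and $i-j \ge \ell-q+2$. The remaining subcase $i=j$ in (ii) is ruled out separately by the wrap-around cycle $v_i, e, u_q, f_q, u_{q+1}, \ldots, u_\ell, e_i, v_{i+1}, e_{i+1}, v_{i+2}, \ldots, e_{i-1}, v_i$, whose length is $c + (\ell-q+1) > c$.

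The only real work is the routine verification that each candidate is a valid Berge cycle, i.e., that its defining edges are pairwise distinct and its defining vertices are pairwise distinct. Edge distinctness holds because $e, f \in (E(H') \cup \{f_0\}) - E(C)$ are disjoint from $E(C)$, because the interior path edges $f_q, \ldots, f_{\ell-1}$ are mutually distinct and avoid $f_0$ since $q \ge 1$, and because $e \ne f$ in (iii). Vertex distinctness holds because $V(P)$ and $V(C)$ are disjoint away from $u_0 = v_c$ (and $u_0$ occurs only in the o-lollipop case), while $q \ge 1$ excludes $u_0$ from the subpath $u_q, \ldots, u_\ell$ used in $C^*$. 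The main obstacle is the bookkeeping: one must split into the subcases $i<j$, $i>j$ (and in (ii) also $i=j$) and choose the correct endpoint of each splice edge in each subcase so that the resulting cycle is long enough to contradict the choice of $C$. No conceptual difficulty is expected beyond this case analysis.
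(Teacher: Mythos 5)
Your proof is correct and follows essentially the same cycle-rerouting approach as the paper, which itself only writes out part (i) in detail and declares (ii) and (iii) ``similar.'' You correctly identify the asymmetry in (ii) (whether the connection at the $C$-end consumes an edge of $C$ or not), the separate wrap-around argument needed to exclude $i=j$ in (ii), and the vertex/edge-disjointness checks that rely on $q\ge 1$ and on $e\in(E(H')\cup\{f_0\})-E(C)$. One small wording slip: in parts (i) and (iii) you say imposing $\le c$ ``on each'' candidate yields the stated bound, but only the candidate that keeps the \emph{longer} arc of $C$ produces the lower bound on $|j-i|$; the other gives a complementary upper bound. This does not affect correctness.
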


\begin{proof} We will prove (i). If $j\neq i$, then we can replace the segment of $C$ from $e_i$ to $e_j$ containing $|j-i|$ vertices with the path $e_j, u_{\ell}, P[u_{\ell}, u_q], u_q, e_i$ which contains $\ell-q + 1$ vertices. The new cycle  cannot be longer than $C$, thus (ii) holds. The proofs for (ii) and (iii) are similar so we omit them.
\end{proof}

%


\begin{claim}\label{allneighbors}
Suppose $(C,P)$ is a $2$-good lollipop or $2$-good dcp-pair.
 Then all $H'$-neighbors of $u_{\ell}$ are in $V(C) \cup V(P)$.  Moreover,

(a) if $(C,P)$ is an $o$-lollipop, then $u_{\ell }$ has no $H'$-neighbors in $\{v_1, v_2, \ldots, v_{\ell}\} \cup \{v_{c-1}, v_{c-2}, \ldots, v_{c-\ell}\}$, and $u_{\ell }$ is not in any edge in the set $\{e_1, e_2, \ldots, e_{\ell-1}\} \cup \{e_c, e_{c-1}, \ldots, e_{c-\ell}\}$, 

(b) if $(C,P)$ is a $p$-lollipop, then $u_{\ell}$ has no ${H}'$-neighbors in $\{v_1, v_2, \ldots, v_\ell\} \cup \{v_c, v_{c-1}, \ldots, v_{c-\ell+1}\}$, and $u_{\ell}$ is not in any edge in the set $\{e_1, \ldots, e_{\ell-1}\} \cup \{e_{c-1}, \ldots, e_{c-(\ell-1)}\}$.
\end{claim}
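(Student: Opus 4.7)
The proof is by contradiction, using Rules~(R1) and~(R2) from the definition of a best lollipop/dcp-pair. For the first assertion, suppose $u_\ell$ has an $H'$-neighbor $w\notin V(C)\cup V(P)$ through some edge $g\in E(H')$. Appending $g$ and $w$ to $P$ produces a (partial) Berge path $P'$ such that $(C,P')$ is a lollipop (or dcp-pair) of the same type as $(C,P)$, since $g\notin E(C)\cup E(P)$ and $w\notin V(C)$. Its cycle is still $C$, but $|V(P')-V(C)|=|V(P)-V(C)|+1$, so $(C,P')$ is strictly better by Rule~(R2), contradicting $2$-goodness.

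For (a), let $(C,P)$ be an o-lollipop (so $u_0=v_c$), and suppose $u_\ell$ has an $H'$-neighbor $v_i$ with $1\leq i\leq \ell$ via $g\in E(H')$. Concatenating $P$, the edge $g$, and the arc $v_i,e_i,v_{i+1},\ldots,e_{c-1},v_c$ of $C$ yields a Berge cycle of length $(\ell+1)+(c-i)\geq c+1$; edge-distinctness follows from $E(C)\cap E(P)=\emptyset$ and $g\notin E(C)\cup E(P)$. This violates Rule~(R1). For an $H'$-neighbor $v_j$ with $c-\ell\leq j\leq c-1$ we use the mirror cycle, going around $C$ in the opposite direction and closing through $e_c$. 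If instead $u_\ell\in e_j$ for some $j$ in the forbidden edge range $\{1,\ldots,\ell-1\}\cup\{c-\ell,\ldots,c\}$, the same construction works with $g$ replaced by $e_j$: since $\{v_j,v_{j+1}\}\subseteq e_j$, one of the two choices of endpoint splices $u_\ell$ into $C$ so that the remaining arc of $C$ forces the total length above $c$ (for $j\in\{1,\ldots,\ell-1\}$ use $v_{j+1}$; for $j\in\{c-\ell,\ldots,c-1\}$ use $v_j$; and for $j=c$ use $v_1$).

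Part (b) is obtained from (a) by two bookkeeping adjustments reflecting the change from o- to p-lollipops: the edge $f_0=e_c$ now lies in $E(P)$ (so it is not an available ``fresh'' edge of $E(H')$), whereas $v_c\in V(C)\setminus V(P)$ becomes a new valid attachment vertex when building cycles that incorporate $P$. These two shifts account exactly for why the forbidden vertex range grows to include $v_c$ while the forbidden edge range drops $e_c$; all new cycles are assembled of the form $v_1,e_1,\ldots,v_j,e_j\text{ or }g,u_\ell,f_{\ell-1},\ldots,u_1,f_0,v_1$ (or its mirror image through $v_c$), and the length arithmetic giving $\geq c+1$ is the same as in (a).

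The main subtlety throughout is purely technical: at each boundary index of the forbidden ranges one must verify both that the constructed Berge cycle is genuinely valid (all of its $f_i$, $g$, and reused $e_i$ edges are pairwise distinct, which is immediate from $g\in E(H')$ together with $E(C)\cap E(P)$ being either empty or $\{f_0\}$) and that the length estimate really reaches $c+1$ at every extreme of the range. These checks are straightforward but case-heavy, which is why the forbidden sets in (a) and (b) look asymmetric.
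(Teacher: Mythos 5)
Your proposal is correct and uses essentially the same approach as the paper: the first assertion uses Rule~(R2) exactly as the paper does, and for parts (a) and (b) you directly construct a Berge cycle that absorbs $P$, one arc of $C$, and the offending edge, then check that it exceeds length $c$, contradicting Rule~(R1). The paper packages precisely this cycle-extension argument into Claim~\ref{shortpath} (applied with $q=1$ and $e=f_0$), and then cites that claim rather than writing out the splices; you have unpacked it inline. One small remark in your favor: your inline argument works even when $\ell=1$, whereas the paper's citation of Claim~\ref{shortpath} technically requires $q\leq\ell-1$. Your treatment of part (b) is stated rather briskly (``two bookkeeping adjustments''), but the constructions you indicate do give the stated forbidden ranges with the right boundary arithmetic, including why $v_c$ joins the forbidden vertex set and $e_c$ leaves the forbidden edge set in the p-lollipop case.
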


\begin{proof} Let $g \in E(H')$ contain $u_{\ell}$. Suppose first there is a vertex $y \in V(H) - (V(C) \cup V(P))$ such that $y \in g$. Let $P'$ be the path  obtained 
from $P$ by adding  edge $g$ and vertex $y$ to the end of $P$. Then $(C, P')$ is a lollipop with $|V(P')| > |V(P)|$, a contradiction.

Part (a) follows from Claim~\ref{shortpath}(ii,iii) for $q=1$ since $f_0 \notin E(C)$ and contains $\{u_1,v_c\}$. Part (b) follows from Claim~\ref{shortpath}(i,ii) for $q=1$ since $f_0 = e_c$ contains $\{u_1, v_c, v_1\}$.
\end{proof}





\begin{claim}\label{bestp}  Let $(C,P)$ is a $j$-good lollipop or $j$-good dcp-pair for some $1\leq j\leq 4$.


(A) If $u_{\ell}\in f_m$ for some $1\leq m\leq \ell-2$ and $P_{m+1}$ is obtained from $P$ by replacing the subpath 
$u_m,f_m,u_{m+1},\ldots,u_{\ell}$ with the subpath $u_m,f_m,u_{\ell},f_{\ell-1},u_{\ell-1},\ldots,u_{m+1}$, then 
$(C,P_{m+1})$ also is a $j$-good lollipop or dcp-pair.

(B) If some edge $g\in E(H')$ contains $V(P)-V(C)$ or is contained in $V(P)-V(C)$  and contains $\{u_{\ell},u_m\}$  for some $1\leq m\leq \ell-2$, and if $P'_{m+1}$ is obtained from $P$ by replacing the subpath 
$u_m,f_m,u_{m+1},\ldots,u_{\ell}$ with the subpath $u_m,g,u_{\ell},f_{\ell-1},u_{\ell-1},\ldots,u_{m+1}$, then $(C,P'_{m+1})$ also is a $j$-good lollipop or dcp-pair.
\end{claim}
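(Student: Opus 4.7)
The plan is to treat both parts as routine verifications. For each, one needs (i) to confirm the constructed pair is a lollipop or dcp-pair of the same type as $(C,P)$, and (ii) to confirm that each of the four statistics R1--R4 matches the corresponding value for $(C,P)$; from this, $j$-goodness of the new pair follows automatically.

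For (A), the driving observation is that $P_{m+1}$ re-uses the \emph{same} vertex set and the \emph{same} edge set as $P$. The edge $f_m$ simply plays a different role, now connecting $u_m$ directly to $u_\ell$, which is legitimate because $\{u_m,u_\ell\} \subseteq f_m$ by hypothesis, while each of the subsequent edges $f_{m+1},\ldots,f_{\ell-1}$ still realizes its defining pair $\{u_i,u_{i+1}\}$, merely traversed in reverse. Since $V(P_{m+1}) = V(P)$ and $E(P_{m+1}) = E(P)$, the intersections of $P_{m+1}$ with $C$ coincide with those of $P$, the lollipop/dcp-pair type is preserved, and all four goodness statistics are trivially unchanged.

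For (B) the same template applies, now with $E(P'_{m+1}) = (E(P) \setminus \{f_m\}) \cup \{g\}$. Validity rests on $g \in E(H')$ forcing $g \notin E(C) \cup E(P)$, together with $m \ge 1$ ensuring $f_m \ne f_0$ so that $f_m \notin E(C)$ in all three settings; hence $P'_{m+1}$ meets $C$ in vertices and in edges exactly as $P$ does. Since $V(P'_{m+1}) = V(P)$, statistics R1 and R2 are immediate, and R3 is preserved because neither $f_m$ nor $g$ belongs to $E(C)$. The only statistic requiring care is R4, which decreases by one exactly when $f_m \subseteq V(P) - V(C)$ and increases by one exactly when $g \subseteq V(P) - V(C)$. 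The hypothesis that $g \subseteq V(P) - V(C)$ or $g \supseteq V(P) - V(C)$ is tailored precisely to make this net change nonnegative: immediately in the former case, and in the latter by invoking Claim~\ref{allneighbors} (applied at $u_\ell \in g$) to confine $g$ inside $V(C) \cup V(P)$ and then case-analyze whether $f_m$ itself was fully inside $V(P) - V(C)$. The only real obstacle I anticipate is this last R4 check in the case $g \supseteq V(P) - V(C)$, where $g$ may contain vertices of $V(C)$ and therefore fail to contribute to R4 on its own.
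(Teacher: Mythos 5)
Your approach is the right one and matches the paper's (a direct verification that the statistics R1--R4 cannot drop), and your bookkeeping for (A) and for R1--R3 in (B) is correct. The trouble is that you explicitly leave the R4 check unresolved in the case $g \supseteq V(P)-V(C)$, which is exactly the substantive point of the claim; a proof that flags the crux as an anticipated obstacle rather than closing it is not complete.

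The gap is easy to close, and it does not require Claim~\ref{allneighbors}. Suppose $V(P)-V(C) \subseteq g$. Passing from $E(P)$ to $E(P'_{m+1}) = (E(P)\setminus\{f_m\})\cup\{g\}$ could lower the R4 count only if $f_m$ were fully contained in $V(P)-V(C)$ while $g$ is not. But $H$ is $r$-uniform, so $f_m \subseteq V(P)-V(C)$ forces $r = |f_m| \leq |V(P)-V(C)|$, while $V(P)-V(C) \subseteq g$ forces $|V(P)-V(C)| \leq |g| = r$; hence $|V(P)-V(C)| = r$ and $f_m = V(P)-V(C) = g$, contradicting $g \in E(H')$ (so $g \notin E(P)$, in particular $g \neq f_m$). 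Therefore $f_m \not\subseteq V(P)-V(C)$, removing $f_m$ cannot decrease the R4 statistic, and since $(C,P)$ was assumed $j$-good, the new pair $(C,P'_{m+1})$ is $j$-good as well. This is the same reasoning the paper compresses into the single sentence ``since $g$ contains $V(P)-V(C)$ or is contained in $V(P)-V(C)$, $(C,P)$ cannot be better than $(C,P'_{m+1})$.''
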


\begin{proof} Let us check the definition of a  $j$-good lollipop or dcp-pair.
 Part (A) holds because  the vertex set and edge set of $P_{m+1}$ are the same as those of $P$.
 
 In Part (B),  $V(P'_{m+1})-V(C)=V(P)-V(C)$, and $E(P'_{m+1})$ is obtained from  $E(P)$ by deleting $f_m$ and adding $g$. But since $g$
 contains $V(P)-V(C)$ or is contained in $V(P)-V(C)$, $(C,P)$ cannot be better than $(C,P'_{m+1})$.
\end{proof}

\section{Proof of Lemma~\ref{nlem3}: paths in lollipops must be short}

Let $(C,P)$ be a best lollipop with $C = v_1, e_1, v_2, \ldots, v_c, e_c, v_1$. Say $P = u_0, f_0, u_1, f_1, \ldots, u_\ell$ with $u_0= v_c$ if $(C,P)$ is an o-lollipop, and $P = f_0, u_1, f_1, \ldots, u_\ell$ with $f_0 = e_c$ if it is a p-lollipop. 

In this section we prove that if $P$ is long, then we can find a longer cycle than $C$. 
For this we will use a modification of a lemma from Dirac's original proof of Theorem~\ref{dirac} in~\cite{D}.
Let $Q$ and $Q'$ be two (graph) paths in a graph $G$. We say $Q$ and $Q'$ are {\em aligned} if for every $x, y \in V(Q) \cap V(Q')$, $x$ appears before $y$ in $Q$ if and only if $x$ appears before $y$ in $Q'$. 

\begin{lemma}[Lemma 5 in \cite{KLM2}]\label{diraclemma}Let $Q$ be an $x,y$-path in a $2$-connected graph $G$, and let $z \in V(G)-\{y\}$. Then there exists an $x,z$-path $P_1$ and an $x,y$-path $P_2$ such that (a) $V(P_1) \cap V(P_2) = \{x\}$, and (b) each of $P_1$ and $P_2$ is aligned with $Q$. 

\end{lemma}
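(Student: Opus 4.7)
The strategy is to combine a Menger-type existence argument with a two-level extremal selection. First, handle the degenerate case $z=x$ by taking $P_1$ to be the one-vertex path $(x)$ and $P_2 = Q$; both are trivially aligned with $Q$ and share only $x$. So assume $z \notin \{x,y\}$. Adjoining to $G$ a new vertex $w$ adjacent to $y$ and $z$ preserves $2$-connectivity, so Menger's theorem yields two internally disjoint $x,w$-paths in this augmented graph. Deleting $w$ from each produces internally disjoint paths $A\colon x\to y$ and $B\colon x\to z$ in $G$; let $\mathcal{F}$ denote the nonempty family of all such pairs.

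Among all pairs in $\mathcal{F}$, pick $(A,B)$ that first maximizes $|V(A)\cap V(Q)|+|V(B)\cap V(Q)|$ and, subject to that, minimizes $|E(A)|+|E(B)|$. The claim is that both $A$ and $B$ are then aligned with $Q$, so setting $(P_1,P_2):=(B,A)$ (an $x,z$-path and an $x,y$-path, respectively) satisfies conditions (a) and (b).

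To establish the claim, suppose for contradiction that (say) $A$ has a descent, and choose innermost consecutive $Q$-visits $q_i,q_j\in V(A)\cap V(Q)$ along $A$ with $q_i$ preceding $q_j$ on $A$ but $i>j$ (so no $Q$-vertex of $A$ lies strictly between $q_i$ and $q_j$ along $A$). Let $R := q_i q_{i-1}\cdots q_{j+1}q_j$, a subpath of $Q$. Replacing the $A$-segment from $q_i$ to $q_j$ by $R$ produces a walk from $x$ to $y$ whose simple-path reduction $A^{\ast}$ satisfies $V(A^{\ast})\setminus V(A)\subseteq V(R)\subseteq V(Q)$. The plan is to show that $(A^{\ast},B)$, possibly after a symmetric adjustment of $B$, is a pair in $\mathcal{F}$ that is strictly better than $(A,B)$ in the extremal order, contradicting the choice.

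The main obstacle is the case analysis needed to verify this strict improvement while preserving internal disjointness of the new pair. Roughly: if $V(R)$ contributes a $Q$-vertex not already in $V(A)$, then $|V(A^{\ast})\cap V(Q)|>|V(A)\cap V(Q)|$, breaking first-level maximality; if every $q_k$ with $j<k<i$ already lies on $A$, then $R$ and $A$ overlap, and one uses the earliest such overlap to shortcut $A$ along $Q$, preserving the $Q$-vertex count while strictly reducing $|E(A)|$, breaking the second-level minimality; the most delicate subcase is when $R$ meets $V(B)\setminus\{x\}$, which requires a coordinated swap that also re-routes $B$ along $Q$ to pick up additional $Q$-vertices in $V(B)$. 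In each subcase the innermost-descent choice of $(q_i,q_j)$ together with the original disjointness $V(A)\cap V(B)=\{x\}$ confines the possible interactions enough to produce a strictly better feasible pair; iterating (or invoking the extremality of the initial pair) produces the contradiction and completes the proof.
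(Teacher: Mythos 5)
The paper does not actually prove Lemma~\ref{diraclemma}: it is imported verbatim from \cite{KLM2} (``Lemma 5''), so there is no in-paper argument to compare yours against, and your proposal has to stand on its own. It does not. The Menger setup (attach $w$ adjacent to $y$ and $z$, split the two internally disjoint $x,w$-paths) and the two-level extremal choice over $\mathcal F$ are a sensible frame, but the entire content of the lemma is concentrated in the repair step, and that step is asserted rather than carried out. Concretely: (i) in the subcase you yourself call ``the most delicate,'' where the rerouting segment $R\subseteq Q$ meets $V(B)-\{x\}$, the pair $(A^{\ast},B)$ leaves $\mathcal F$, and the ``coordinated swap that also re-routes $B$ along $Q$'' is never defined. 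Rerouting $B$ along $Q$ can in turn collide with $A^{\ast}$ (or force a further reroute of $A^{\ast}$), and it can \emph{decrease} $|V(B)\cap V(Q)|$; since your first-level objective is the \emph{sum} over both paths, it is not established that the combined move is feasible, nor that it strictly improves the lexicographic potential. This circular interaction between the two paths is exactly where such alignment lemmas are hard, and it is the one place your argument says ``confines the possible interactions enough'' instead of giving the construction.

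(ii) Even in the clean subcase where $R$ avoids $B$, the ``simple-path reduction'' $A^{\ast}$ of the spliced walk is problematic: if $R$ revisits vertices of the untouched segments $A[x,q_i]$ or $A[q_j,y]$, the reduction shortcuts across them and may delete $Q$-vertices that $A$ originally contained, so the inequality $|V(A^{\ast})\cap V(Q)|\ge |V(A)\cap V(Q)|$ (with strictness when $R$ brings in a new $Q$-vertex) is not automatic and is not verified. Likewise, in the subcase where every internal vertex of $R$ already lies on $A$, the claimed strict decrease of $|E(A)|$ at constant $Q$-vertex count needs an explicit construction of the shortcut and a check that it does not reorder or drop $Q$-vertices. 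Until the repair is shown in every case to (a) produce a pair still in $\mathcal F$ and (b) strictly improve the potential, the extremal argument does not close. As written this is a plausible plan, not a proof; the missing case analysis is the lemma.
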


We call a vertex $x\in V(H)$ {\em eligible} if there exists a best lollipop $(C',P')$ where $x$ is the end vertex of $P'$ that is not contained in $C'$.  In particular, by Claim~\ref{bestp} in our best lollipop $(C,P)$, for any $f_m \in E(P)$ containing $u_{\ell}$, $u_{m+1}$ is eligible by considering the best lollipop $(C, P_{m+1})$, where $P_{m+1}$ is defined as in  Claim~\ref{bestp}.

Set $u_0 = v_c$ if $(C,P)$ is a p-lollipop. Recall if $(C,P)$ is a p-lollipop, then $E(C) \cap E(P) = e_c = f_0$.  Define \[S_1 = N_{H'}(u_{\ell}) \cap V(P) \text{ and } S_2 = \{u_m: 0\leq m \leq \ell-1, u_{\ell} \in f_m, u_m \notin S_1\}.\] Observe that $(S_1 \cup S_2) \cap V(C) \subseteq \{u_0\}$.

We will prove the following for $(C,P)$ and eligible vertex $u_{\ell}$, but all proofs will work for {\em any} best lollipop and corresponding eligible vertex. 
\begin{lemma}\label{maindirac}If $\delta(H) \geq k$ and $|E(P)| = \ell \geq k$, then (A) $|S_1 \cup S_2| \leq k-1$ and (B) $|S_1| \leq k-2$. 
\end{lemma}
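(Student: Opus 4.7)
The plan is to prove both parts by contradiction. Assume $|S_1 \cup S_2| \geq k$ in (A), or $|S_1| \geq k - 1$ in (B); in either case I will build a Berge cycle in $H$ of length strictly greater than $c$, contradicting that $(C,P)$ is a best lollipop. The first reduction uses $\ell \geq k$ and $c < 2k \leq 2\ell$ to pin down the edges through $u_\ell$. Since $c \leq 2\ell - 1$, the two forbidden vertex ranges in Claim~\ref{allneighbors} together cover $V(C) - \{u_0\}$, and the two forbidden edge ranges cover either all of $E(C)$ (in the o-lollipop case) or $E(C) \setminus \{e_c\}$ with $e_c = f_0 \in E(C) \cap E(P)$ (in the p-lollipop case). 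Combined with $N_{H'}(u_\ell) \subseteq V(C) \cup V(P)$ from the same claim, this yields $N_{H'}(u_\ell) \subseteq V(P)$ and forces every $H'$-edge through $u_\ell$ to be contained in $V(P) \cup \{u_\ell\}$. Thus by $\delta(H) \geq k$ at least $k$ edges at $u_\ell$ lie in $E(P) \cup E(H')$: each is either some $f_m$ containing $u_\ell$, placing $u_m \in S_1 \cup S_2$, or an $H'$-edge whose other vertices lie in $V(P)$, placing each of them in $S_1$.

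The second step converts members of $S_1 \cup S_2$ into alternate best lollipops via Claim~\ref{bestp}. Each $u_m \in S_2$ with $1 \leq m \leq \ell-2$ yields a best lollipop $(C, P_{m+1})$ ending at $u_{m+1}$ by Claim~\ref{bestp}(A), and each $u_m \in S_1$ in the same range yields one by Claim~\ref{bestp}(B): the certifying $H'$-edge $g \ni \{u_\ell, u_m\}$ sits inside $V(P) - V(C)$ whenever $u_0 \notin g$, and in the remaining case $u_0 \in g$ one directly closes $P$ into a Berge cycle of length $\ell + 1$, long enough to contradict the maximality of $C$ whenever $\ell + 1 > c$. Hence $|S_1 \cup S_2| \geq k$ (resp.\ $|S_1| \geq k-1$) yields a large collection of alternate best lollipops with distinct endpoints in $\{u_2, \ldots, u_{\ell-1}\}$.

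The main construction then applies Lemma~\ref{diraclemma} inside the $2$-connected incidence graph $I_H$. Taking $Q$ to be the $u_0, u_\ell$-path corresponding to $P$, with $x = u_0$, $y = u_\ell$ and $z$ chosen in $V(C) \setminus \{u_0\}$, the lemma produces two internally disjoint paths aligned with $Q$; the one ending at $z$ translates to a Berge path $R$ from $u_\ell$ (or from an eligible endpoint) to $z$ whose defining vertices and edges avoid $V(P) \cup E(P)$. Splicing $R$ with $P$ and with the long arc of $C$ between $z$ and $u_0$ (of length at least $\lceil c/2 \rceil$ by Claim~\ref{longQ}(c)) yields a Berge cycle of length at least $\ell + \lceil c/2 \rceil + 1 > c$, the required contradiction. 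The gap of one between parts (A) and (B) comes from the fact that each $H'$-edge witnessing $S_1$ supplies a fresh edge outside $E(P) \cup E(C)$, usable in the new Berge cycle without competing with any $P$-edge and thereby buying one additional unit of savings. The principal obstacle is the bookkeeping: verifying that the defining vertices and edges of $R$, the retained segment of $P$, and the chosen arc of $C$ are pairwise disjoint (so they form a genuine Berge cycle), correctly handling the o-lollipop versus p-lollipop distinction (with the shared edge $e_c = f_0$), and dealing with the boundary indices $m \in \{0, \ell-1\}$ where Claim~\ref{bestp} does not directly apply.
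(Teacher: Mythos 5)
Your proposal diverges from the paper's proof at the crucial step, and the divergence is not a working alternative; there is a genuine gap.

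Your first reduction is correct: since $\ell\geq k$ and $c<2k\leq 2\ell$, Claim~\ref{allneighbors}'s forbidden ranges cover $V(C)$ (minus possibly $u_0$), so $S_1=N_{H'}(u_\ell)\subseteq V(P)$, and every $H'$-edge at $u_\ell$ lies in $V(P)$. The eligibility bookkeeping in your second step is also morally right. But the ``remaining case $u_0\in g$'' patch is false: closing $P$ to a Berge cycle of length $\ell+1$ (or $\ell$ for a p-lollipop) does not contradict the maximality of $C$, because nothing forces $\ell+1>c$; we only know $\ell\geq k$ while $c$ can be as large as $2k-1$. That case needs a different argument.

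The main construction is where the proposal breaks down. You apply Lemma~\ref{diraclemma} with $Q=P$, $x=u_0$, $y=u_\ell$, and $z\in V(C)\setminus\{u_0\}$, hoping to extract a Berge path $R$ ``from $u_\ell$ to $z$ whose defining vertices and edges avoid $V(P)\cup E(P)$.'' The lemma does not produce such an $R$. It gives a $u_0,z$-path $P_1$ (starting at $u_0$, not $u_\ell$) and a $u_0,u_\ell$-path $P_2$, and alignment with $Q=P$ only forces them to respect $P$'s ordering when they touch $P$; it does not keep them off $V(P)\cup E(P)$. Worse, the alleged final cycle cannot be assembled as described: $R$ runs between $u_0$ and $z$, the long arc of $C$ also runs between $u_0$ and $z$, and $P$ runs from $u_0$ to $u_\ell$, so splicing ``$R$ with $P$ and the long arc'' passes through $u_0$ twice and never closes. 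There is a deeper reason such an $R$ cannot exist at all: by your own step one, every edge of $E(H')\cup E(P)$ at $u_\ell$ sits inside $V(P)$, and (as in Claim~\ref{allneighbors}) no edge of $C$ contains $u_\ell$, so there is no Berge path out of $u_\ell$ that avoids $V(P)$ internally — the obstruction you are trying to exploit is exactly why the desired bridging path is missing.

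What the paper actually does is choose the Dirac-path $Q$ to be the concatenation $P'=v_1,e_1,\dots,v_c,f_0,u_1,\dots,u_\ell$ (the whole cycle unrolled, then $P$) and take $z=u_\beta$ to be an interior vertex of $P$ at the start of the $S_1\cup S_2$ range — not a vertex of $C$. Alignment with this long $P'$ then yields, after truncation, two disjoint bridging Berge paths $P_1',P_2'$ each starting at an element of $C$ and ending in the tail $\{u_\beta,\dots,u_\ell\}$ without re-entering $C$ or the tail internally. The final cycle is a long arc of $C$ joined to $P_1'$ and $P_2'$, with a careful traversal through $P$ (plus one $H'$-edge at $u_\ell$) that picks up all of $S_1\cup S_2\cup\{u_\ell\}$. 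Your choice of $z\in V(C)$ misses this structure entirely, and the rest of your construction inherits the defect. The ``bookkeeping'' you flag at the end is not a bookkeeping concern — it is the absence of the correct configuration.
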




\begin{proof}
Throughout this proof we will use $i_1$ to denote the smallest index such that $u_{i_1} \in S_1 \cup S_2$ and $j_1$ to denote the smallest index such that $u_{j_1} \in S_1$. 

Since $\ell \geq k$, $u_\ell$ has no $H'$-neighbors in $V(C) - V(P)$ by Claim~\ref{allneighbors}. That is, $S_1 =N_{H'}(u_{\ell})$. 

\begin{claim}\label{cXY} (A) If $|S_1 \cup S_2| \geq k$ and some edge  $g \in E(C) \cup E(H')$ intersects both $X$ and $\{u_{i_1+1}, \ldots, u_{\ell}\}$, then $(C,P)$ is a p-lollipop and $g = e_c (=f_0)$.

(B) If $|S_1| \geq k-1$, then no edge in $E(H')$ intersects both $X$ and $\{u_{j_1+1}, \ldots, u_\ell\}$. 
\end{claim}

\begin{proof}
Suppose 
   $g \in E(C) \cup E(H')$ is an edge intersecting $X$ and $\{u_{i_1+1}, \ldots, u_\ell\}$, say $\{v_a, u_b\} \in g$ where $b \geq i_1 + 1$, and without loss of generality $g = e_{a-1}$ if $g \in E(C)$. By symmetry, if $g \in E(H')$ then we may assume $a \leq \lfloor c/2 \rfloor$ when $(C,P)$ is an $o$-lollipop, and $a \leq \lceil c/2 \rceil$ when $(C,P)$ is a p-lollipop, and if $g = e_{a-1}$,  then we may assume $a-1 \leq \lfloor c/2 \rfloor$ or $a-1=c$ (in this case, $(C,P)$ is an o-lollipop, otherwise we have proven the claim).

 By Claim~\ref{allneighbors}, $u_{\ell} \notin g$, so $b \leq \ell-1$. Let $j$ be the largest index smaller than $b$ such that $u_j \in S_1 \cup S_2$. Since $i_1 < b$, such an $j$ exists.
Let $h$ be an edge in $E(H') \cup f_{j}$ that contains $\{u_j,u_{\ell}\}$. Set \[C' = v_c, e_{c-1}, v_{c-1}, \ldots, v_a, g, u_b, f_b, \ldots, u_{\ell}, h, u_j, f_{j-1}, \ldots, u_0 (=v_c).\] 

The cycle $C'$ contains all vertices in $S_1 \cup S_2 \cup \{u_{\ell}\}$, and among these vertices, only $u_0$ may belong to $C$. Moreover, $C'$ contains at least $c-(a-1) \geq c-(\lfloor c/2 \rfloor) \geq c-(k-1)$ vertices in $C$. 
Therefore  
$$|C'| \geq c-(a-1) + |S_1 \cup S_2 \cup \{u_{\ell}\}| - |(S_1 \cup S_2) \cap V(C)| \geq c-(k-1) + k + 1 -1 > c=|C|,$$ contradicting the choice of $C$. This proves (A). 

The proof for (B) is similar but we replace $S_1 \cup S_2$ with $S_1$ and $i_1$ with $j_1$. Define the cycle $C'$ as before, and observe that if $(C,P)$ is an o-lollipop, then $C'$ contains at least $c-(a-1) \geq c-(\lfloor c/2 \rfloor -1) \geq c-(k-2)$ vertices in $C$, and if $(C,P)$ is a p-lollipop, then $C'$ contains at least $c-(k-1)$ vertices in $C$, but $S_1 \cap V(C) = \emptyset$. In either case, $|C'| \geq c-(k-1) + (k-1) + 1 > |C|$.
\end{proof}


Recall that $i_1$ is the smallest index with $u_{i_1} \in S_1 \cup S_2$. If $u_{i_1} \in S_1$ set $\beta = i_1$; otherwise set $\beta ={i_1}+1$.
%

\begin{claim}\label{fm1'} Suppose $|S_1\cup S_2|\geq k$. 
Then 

(a) there exists an edge $f_j$ with $j \geq \beta$ that intersects $V(C)$,

(b) $\{u_{j}, \ldots, u_{\ell-1}\} \in S_1 \cup S_2$, 
 
(c) $(C,P)$ is a p-lollipop, and

(d) $u_{\ell}\in f_0$ and $u_{\ell} \in f_j$ for every $f_j$ satisfying (a).  
\end{claim}

\begin{proof}
 Consider the 2-connected incidence graph $I_H$ of $H$ and the (graph) path
  \[P' = v_1, e_1, v_2, \ldots, v_c, f_0,u_1, \ldots, f_{\ell}, u_{\ell}\] in $I_H$. 
  We apply Lemma~\ref{diraclemma} to $P'$ with $z = u_\beta$ to obtain two internally disjoint (graph) paths $P_1$ and $P_2$ such that $P_1$ is a $v_1, z$-path, $P_2$ is a $v_1, u_\ell$-path, and each $P_i$ is aligned with $P'$.

We modify $P_i$ as follows: if $P_i = a_1, a_2, \ldots, a_{j_i}$, let $q_i$ be the last index such that $a_{q_i} \in X':= \{v_1, e_1, \ldots, v_c, e_c\}$ and let $p_i$ be the first index such that $a_{p_i} \in Y':=\{u_{\beta}, f_{\beta}, u_{\beta+1}, \ldots, f_{\ell}, u_{\ell}\}$. 

If $a_{p_i} =u_s$ for some $s$, then set $P_i'= P_i[a_{q_i}, a_{p_i}]$. If $a_{p_i} = f_s$ for some $s$, then set $P_i' = P_i[a_{q_i}, a_{p_i}], u_{s+1}$. 

Observe that $P_1'$ and $P_2'$ are either Berge paths or partial Berge paths in $H$. Moreover, $P_1'$ ends with vertex $z = u_{\beta}$
and contains no other elements of $Y'$ since it is aligned  with $P'$. 

If both $P_1'$ and $P_2'$  begin with $v_1$,  then some $P_i'$ avoids $f_0$ and first intersects the set $\{u_1, f_1, \ldots, u_{\ell}\}$ at some element $a_j$. Then replacing the segment $v_1, e_c, v_c$ in $C$ with the longer segment $v_1, P_i'[v_1, a_j], a_j, P[a_j, f_0], f_0, v_c$ yields a cycle in $H$ that is longer than $C$, a contradiction. Therefore we may assume that $P_1'$ and $P_2'$ are vertex-disjoint and edge-disjoint in $H$. 
 Let $u_g$ be the last vertex of $P'_2$. We have $g > \beta$.

If $f_{g-1} \notin E(P'_2)$, or if $u_{g-1} \in S_1$, let $g'$ be the largest index less than $g$ such that $u_{g'} \in S_1 \cup S_2$. Otherwise if $f_{g-1} \in E(P_2')$ and $u_{g-1} \notin S_1$,  let 
 $g'$ be the largest index less $g-1$ such that $u_{g'}\in S_1\cup S_2$.
If $u_{g'} \in S_1$, let $h \in E(H')$ contain $\{u_{g'}, u_{\ell}\}$; otherwise, let $h = f_{g'}$.  We claim

\begin{equation}\label{hdisjoint}
h \notin E(P_1' \cup P_2').
\end{equation} 
Suppose not. If $h \in E(H')$, then $h$ contains a vertex outside of $\{u_{\beta}, \ldots, u_{\ell}\}$. But this violates either Claim~\ref{cXY} (B) or the definition of $S_1 = N_{H'}(u_{\ell}) \subseteq \{u_{\beta}, \ldots, u_{\ell}\}$. If $h = f_{g'}$, then $f_{g'} \in E(P'_2)$. By construction, $u_g=u_{g'+1}$, but in this case we chose $g'$ such that $g' < g-1$, a contradiction. This proves~\eqref{hdisjoint}.
 
 By Claim~\ref{longQ}, there exists a long $a_{q_1}, a_{q_2}$-segment $Q$ of $C$ such that $|V(Q)| \geq c-(k-1)$ with equality only if at least one of $a_{q_1}$ or $a_{q_2}$ is an edge of $C$. 
  
By~\eqref{hdisjoint} we may define the cycle
\[C'=a_{q_1}, Q, a_{q_2}, P'_2, u_g, f_{g}, \ldots, u_{\ell}, h, u_{g'}, f_{g'-1}, \ldots, u_\beta, P_1', a_{q_1}.\]

Observe that $C'$ contains the set $U = \{u_\beta, \ldots, u_{g'}\} \cup \{u_g, \ldots, u_{\ell}\}$. If $f_{g-1} \notin E(P_2')$ or $g'=g-1$, then $S_1 \cup S_2 \cup \{u_{\ell}\} - \{u_{i_1}\} \subseteq U - V(C)$. Otherwise, $S_1 \cup S_2 \cup \{u_{\ell}\} - \{u_{g-1}, u_{i_1}\} \subseteq U-V(C)$. Therefore $|U-V(C)| \geq |S_1 \cup S_2| +1 - 2 \geq k-1$ with equality only if $f_{g-1} \in E(P_2')$ and $u_{g-1} \in S_2$. 
We have
\begin{equation}\label{C'}
|C'| \geq |V(Q)| + |V(P_2' \cup P_1') - V(C) - U| + |U-V(C)|.
\end{equation}

If $P_1'$ or $P_2'$ contains a vertex outside of $V(C)  \cup U$, then $|C'| \geq c-(k-1) + 1 + (k - 1) > |C|$, contradicting the choice of $C$. Thus by construction, we may assume that 
\begin{equation}\label{1edge}
\hbox{$P_1'$ and $P_2'$ each contain at most one edge.}
\end{equation}
Similarly, if $|V(Q)| \geq c-(k-1)+1$ or if $|U-V(C)| \geq k$ then $|C'| \geq c-(k-1) + 0 + (k-1) + 1 > |C|$, a contradiction. So $f_{g-1} \in E(P_2')$ and  $u_{g-1} \in S_2$, proving (a). By~\eqref{1edge} and Claim~\ref{cXY}, $P_2' = v_i, f_{g-1}, u_g$ for some $v_i \in V(C)$. Also, we must have $U -V(C)= S_1 \cup S_2 \cup\{u_{\ell}\} - \{u_{g-1},u_{i_1}\}$ which has size exactly $k-1$. In particular, $\{u_g, u_{g+1}, \ldots, u_{\ell-1}\} \subseteq S_1 \cup S_2$, so (b) holds. 

In order to have $|V(Q)| = c-(k-1)$ by Claim~\ref{longQ}, $P_1'$ must begin with an edge of $C$. Then by~\eqref{1edge} and Claim~\ref{cXY} (A), we must have $(C,P)$ is a p-lollipop and $P_1'=e_c, u_\beta$. Therefore (c) holds. We have shown that $u_{\ell} \in f_{g-1}$. If $f_s$ is another edge with $s \geq \beta$ that intersects $V(C)$, say at vertex $v_{s'}$, then we may substitute $P_2' = v_{s'}, f_s, u_{s+1}$ (which is disjoint from $P_1'$) and symmetrically obtain that $u_{\ell} \in f_s$ as well. If $u_{\ell} \notin f_0$ (so $i_1 \geq 1$ by Claim~\ref{allneighbors}), then the cycle \[C'' = a_{q_1}, Q, a_{q_2}, f_{g-1}, u_g, \ldots, u_{\ell}, h, u_{g'}, \ldots,u_1, f_0 (=e_c=a_{q_1})\] contains all of $S_1 \cup S_2 \cup \{u_{\ell}\} - \{u_{g-1}\}$, and this set is disjoint from $V(C)$. Therefore $|C''| \geq c-(k-1) + k > |C|$. This proves (d).
%
%
%
\end{proof}

\begin{claim}\label{1int}$f_{\ell-1} \cap V(C) \subseteq  \{v_{\lceil c/2 \rceil}\}$. 
\end{claim}
\begin{proof}
If $f_{\ell-1}$ contains a vertex $v_j \in V(C) - \{v_{\lceil c/2 \rceil}\}$, without loss of generality we may assume $j \geq \lceil c/2 \rceil+1$. Then $C' = v_1, e_1, \ldots, v_j,f_{\ell-1}, u_{\ell-1}, f_{\ell-2}, \ldots, u_1, f_0, v_1$ has length at least $\lceil c/2 \rceil +1 + |V(P)-V(C)|-1 \geq c-(k-2) + \ell-1 > c$, a contradiction. 
\end{proof}

\begin{claim}\label{eligible1}If $|S_1 \cup S_2| \geq k$ and $u_i \in S_1 \cup S_2$, then $u_{i+1}$ is eligible.
\end{claim}
\begin{proof}By Claim~\ref{fm1'}, $(C,P)$ is a p-lollipop.
If $u_i \in S_1$ then let $h \in E(H')$ contain $\{u_i, u_{\ell}\}$. By Claim~\ref{allneighbors}, $h \subseteq V(P)-V(C)$. 
The result follows from Claim~\ref{bestp}.
\end{proof}

\begin{claim}\label{fm1''}  $|S_1\cup S_2|\leq k-1$. 
\end{claim}

\begin{proof} Suppose $|S_1\cup S_2|\geq k$. By Claim~\ref{fm1'}, $(C,P)$ is a p-lollipop and $u_{\ell} \in f_0$. Let $f_j$ be an edge with $j \geq \beta\geq 1$ intersecting $V(C)$. By Claims~\ref{fm1'} and~\ref{eligible1}, $u_{j+1}, u_{j+2}, \ldots, u_{\ell}$ are also eligible vertices. Thus applying Claim~\ref{fm1'} to these vertices and their corresponding best lollipops $(C, P_{i+1}')$ (where $P_{i+1}'$ is defined as in Claim~\ref{bestp}) imply that $\{u_{j+1}, \ldots, u_\ell, u_{\ell}\} \subseteq f_0$. 

Symmetrically consider $P' = f_0,u_{\ell},f_{\ell-1},u_{\ell-1},\ldots,u_1$ and observe that $(C, P')$ is a best lollipop and $P'$ has first edge $f_0$. Applying Claims~\ref{fm1'} and~\ref{eligible1} to $(C, P')$ and the eligible vertex $u_1$, we obtain that $u_1 \in f_0$ and vertices $u_{2}, u_{3}, \ldots, u_{j}$ are eligible and therefore contained in $f_0$. Thus $V(P) \subseteq f_0$ and so $r= |f_0| \geq |V(P) \cup \{v_1,v_c\}|$. 
%
%
%

No edge in $C$ may contain $u_{\ell}$ by Claim~\ref{allneighbors}. If a vertex $w\in f_{\ell-1}$ is outside of $V(C)\cup V(P)$, then we can replace $u_{\ell}$ with $w$ in $P$ and get another best lollipop. So again by Claim~\ref{fm1'}, $w\in f_0$. By Claim~\ref{1int}, $f_{\ell-1}$ contains at most one vertex in $V(C)$. Then $f_0$ contains all of $f_{\ell-1}-V(C)$ as well as two vertices $v_1$ and $v_c$ in $C$. This contradicts that $H$ is $r$-uniform.
\end{proof}

We are now ready to prove Lemma~\ref{maindirac}. It remains to prove (B), so 
suppose towards contradiction that $|S_1| \geq k-1$. By Claim~\ref{fm1''}, $|S_1|=|S_1 \cup S_2| = k-1$ and $S_1 = S_1 \cup S_2$. As in the Proof of Claim~\ref{fm1''}, let $P' = v_1, e_1, v_2, \ldots, v_c, f_0,u_1, \ldots, f_{\ell}, u_{\ell}$, and apply Lemma~\ref{diraclemma} to the incidence graph $I_H$ with $P'$ and $z = u_{j_1}$. We obtain two aligned with $P'$ paths and modify them to get Berge paths or partial Berge paths $P_1'$ and $P_2'$ such that $P_1'$ starts in $C$ and ends in $u_{j_1}$, and $P_2'$ starts in $C$ and ends in a vertex $u_g$ with $g > j_1$. Also, if $P_2'$ contains some edge $f_i$ with $i \geq j_1$, then we may assume $g = i+1$. 

Let $a_{q_1}$ and $a_{q_2}$ be the first elements in $P_1'$ and $P_2'$ respectively. Let $Q$ be a long $a_{q_1}, a_{q_2}$-segment in $C$. As before, $|V(Q)| \geq c-(k-1)$ with equality only if at least one of $a_{q_1}$ and $a_{q_2}$ is an edge.

Let $g'$ be the largest index less than $g$ such that $u_{g'} \in S_1$ and let $h \in E(H')$ contain $\{u_{g'}, u_{\ell}\}$. Define \[C' = a_{q_1}, Q, a_{q_2}, P_2', u_g, f_g, \ldots, u_{\ell}, h, u_{g'}, \ldots, u_{j_1}, P_1', a_{q_1}.\]

The cycle $C'$ contains the set $U = \{u_{j_1}, \ldots, u_{g'}\} \cup \{u_g, \ldots, u_{\ell}\}$ which contains $S_1 \cup \{u_{\ell}\}$ and intersects $V(C)$ in at most one vertex, $u_0$. Therefore  
\begin{equation}\label{eq1}\hbox{$|U-V(C)| \geq k-1 + 1 - 1=k-1$ with equality only if $S_1 \cup \{u_\ell\} = U$ and $u_0 \in S_1$.} \end{equation}

If $P_1'$ or $P_2'$ contain any internal vertices, if $|U-V(C)| \geq k$, or if $|V(Q)| \geq c-(k-1) + 1$, then $|C'| \geq c-(k-1) + (k-1) + 1 > |C|$, a contradiction. Thus $P_1'$ and $P_2'$ contain at most one edge, $|U-V(C)| = k-1$, and $|V(Q)| = c-(k-1)$. Then~\eqref{eq1} holds. In particular, $u_0 \in S_1$ implies $(C,P)$ is an o-lollipop and $P_1' = u_0$. By Claim~\ref{longQ} and the fact $|V(Q)| = c-(k-1)$, $P_2' = e_j, u_g$ for some $e_j \in E(C)$. Moreover we must have $c=2k-1$ and $e_j = e_{k-1}$, otherwise $|V(Q)| \geq c-(k-1)+ 1$.  
By Claim~\ref{allneighbors} and the fact $\ell \geq k$, $g \leq \ell-1$ and therefore $u_{\ell-1} \in S_1$.

\begin{claim}\label{insideP}For all $u_s \in S_1$, then $f_s \subseteq V(P)$. \end{claim}

\begin{proof}Let $h' \in E(H')$ contain $\{u_s, u_{\ell}\}$ and set $P'_{s+1} = u_0, f_0, \ldots, u_s, h', u_\ell, f_{\ell-1}, \ldots, u_{s+1}$.  The lollipop $(C, P_{s+1}')$ is a 3-good lollipop that omits edge $f_s$ which is incident to the end vertex $u_{s+1}$ of $P_{s+1}$. By Claim~\ref{allneighbors} applied to $(C,P_{s+1}')$, $f_s \subseteq V(P_{s+1}') = V(P)$.
\end{proof}

\begin{claim}\label{insideP2}For every $s \geq g$, $f_s \subseteq S_1 \cup \{u_{\ell}\}$.\end{claim}
\begin{proof} By Claim~\ref{insideP} and~\eqref{eq1}, $f_s \subseteq V(P)$. Suppose $f_s$ contains a vertex $u_{s'} \in V(P) - U= \{u_{g'+1}, \ldots, u_{g-1}\}$.

Let $h' \in E(H')$ be an edge in $E(H')$ containing $\{u_s, u_{\ell}\}$. Then
\[C'' = v_c, e_c, v_1, \ldots, v_{k-1}, e_{k-1}, u_g, \ldots, u_s, h', u_{\ell}, f_{\ell-1}, \ldots, u_{s+1}, f_s, u_{s'}, \ldots, u_0 (=v_c)\] contains $U \cup \{u_{\ell}\} \cup \{u_{g'+1}, \ldots, u_{s'}\}$ and therefore is longer than $C'$ (and $C$). 
\end{proof} 

Recall that $S_1 = S_1 \cup S_2$, and so if $u_{\ell} \in f_s$, then $u_s \in S_1$. If every $f_s$ containing $u_{\ell}$ is a subset of $S_1 \cup \{u_{\ell}\}$, then $d_{H}(u_{\ell}) \leq {|S_1| \choose r-1} ={k-1 \choose r-1} < k=\delta(H)$, a contradiction. So by the previous Claim, $u_{\ell}$ must be contained in some $f_s$ with $s \leq g'$. If there exists  an edge $h' \in E(H')$ containing $\{u_s, u_{g+1}\}$, then the cycle 
\[C'' = v_c, e_c, v_1, \ldots, v_{k-1}, e_{k-1}, u_g, f_{g-1}, \ldots, u_{s+1}, f_s, u_{\ell}, f_{\ell-1}, \ldots, u_{g+1}, h', u_s, \ldots, u_0\]
contains all of $V(P)$ and is longer than $C$, a contradiction. Otherwise every edge in $E(H')$ that intersects $u_{\ell}$ contains $r-1 \geq k-2$ vertices in the $(k-1)$-set $S_1$ but does not contain both $u_s$ and $u_{g+1}$. This implies $r = k-1$ and since $|S_1 \cup \{u_\ell\}| = k > r$, $u_{\ell}$ is contained in at least two edges in $H'$. But then there can  be only  two such edges, one that avoids $u_s$ and one that avoids $u_{g+1}$. Let $h''$ be the one avoiding $u_s$. By Claim~\ref{insideP2}, $f_{\ell-1}$ also contains $k-2$ vertices in $S_1$ and therefore must contain both $u_s$ and $u_{g+1}$. Then we replace $h'$  in $C''$ with $f_{\ell-1}$ and $f_{\ell-1}$ with $h''$ to obtain another longer cycle.
\end{proof}

As a corollary we have the following.

\begin{cor}\label{smalldeg}
Suppose $(C',P')$ is a best lollipop with $C' = v_1', e_1', v_2', \ldots, v_c', e_c', v_1'$ and $P' = u_1', f_1', \ldots, f_{\ell}', u_{\ell}'$. If $\ell = |V(P')| \geq k$, then
\begin{enumerate} 
\item[(i)]$d_{P'}(u_{\ell}') \leq k-1$,
\item[(ii)]$d_{H-C' - P'}(u_{\ell}') \leq 1$ with equality only if $r = k-1$,  and
\item[(iii)]$d_{C'-P'}(u_{\ell}') = 0$.
\end{enumerate}
Moreover if $d_{H-C'-P'}(u_{\ell}') = 1$ and $d_{P'-C'}(u_{\ell}') = k-1$, then $u_{\ell}' \in f_i'$ for every $u_i' \in N_{H - C'-P'}(u_{\ell}')$.
\end{cor}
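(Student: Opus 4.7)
The plan is to derive all parts of the corollary directly from Lemma~\ref{maindirac} together with Claim~\ref{allneighbors}, applied to the same sets $S_1 = N_{H'}(u_{\ell}') \cap V(P')$ and $S_2 = \{u_m' : u_{\ell}' \in f_m', u_m' \notin S_1\}$ used in the proof of that lemma. Since $\ell \geq k$, Claim~\ref{allneighbors} gives $S_1 = N_{H'}(u_{\ell}')$, and Lemma~\ref{maindirac} yields $|S_1\cup S_2|\leq k-1$ and $|S_1|\leq k-2$.

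I would first prove (iii): since $\ell \geq k$ and $c < 2k$, the two arcs of forbidden edges in Claim~\ref{allneighbors}(a) for an o-lollipop (respectively (b) for a p-lollipop) together cover all of $E(C')\setminus E(P')$, so $u_{\ell}'$ lies in no such edge. Next I would prove (ii). Any edge $g\in E(H')$ containing $u_{\ell}'$ has its $r-1$ other vertices inside $N_{H'}(u_{\ell}')=S_1$, so $r-1\leq |S_1|\leq k-2$; combined with $r\geq k-1$ this forces $r=k-1$ and $|S_1|=k-2$, and then $g\setminus\{u_{\ell}'\}=S_1$ is forced, giving at most one such edge. For (i), observe that the assignment $m\mapsto u_m'$ from $\{m:u_{\ell}'\in f_m'\}$ to $V(P')$ is injective and, by the definitions, lands in $S_1\cup S_2$; therefore $d_{P'}(u_{\ell}')\leq |S_1\cup S_2|\leq k-1$.

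For the moreover clause, if $d_{H-C'-P'}(u_{\ell}')=1$ and $d_{P'-C'}(u_{\ell}')=k-1$, then since $d_{P'}(u_{\ell}')\geq d_{P'-C'}(u_{\ell}')=k-1$ while (i) gives $d_{P'}(u_{\ell}')\leq k-1$, we have $d_{P'}(u_{\ell}')=k-1=|S_1\cup S_2|$. The injection $m\mapsto u_m'$ from the previous paragraph is then a bijection onto $S_1\cup S_2$, so every $u_m'\in S_1$ satisfies $u_{\ell}'\in f_m'$; by (ii), $S_1$ coincides with $N_{H-C'-P'}(u_{\ell}')$, giving the desired conclusion.

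The whole argument is essentially a translation of the inequalities already established in Lemma~\ref{maindirac} into the three separate degree statements, so no new combinatorial work is needed. The only step requiring a bit of care is the moreover clause in the p-lollipop case: one must notice that $d_{P'-C'}$ may a priori be smaller than $d_{P'}$ because of the shared edge $f_0'=e_c'$, but the lower bound $d_{P'-C'}(u_{\ell}')\geq k-1$ together with the upper bound $d_{P'}(u_{\ell}')\leq k-1$ from (i) pin $d_{P'}(u_{\ell}')$ to $k-1$ regardless, allowing the injection-becomes-bijection argument to go through.
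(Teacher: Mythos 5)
Your proof is correct and follows essentially the same route as the paper: part (i) from Lemma~\ref{maindirac}(A) via the injection $m\mapsto u_m'$ into $S_1\cup S_2$, part (ii) from $|S_1|=|N_{H'}(u_\ell')|\leq k-2\leq r-1$ forcing any $H'$-edge at $u_\ell'$ to be $\{u_\ell'\}\cup S_1$, part (iii) from Claim~\ref{allneighbors} together with $\ell\geq k>c/2$, and the moreover clause from the injection becoming a bijection. You fill in a couple of details that the paper leaves implicit (in particular, passing from the hypothesis $d_{P'-C'}(u_\ell')=k-1$ to $d_{P'}(u_\ell')=k-1$), but this is a transcription of the same argument rather than a different approach.
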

\begin{proof}
 
Part (i) follows from Lemma~\ref{maindirac} (A). By Lemma~\ref{maindirac} (B), $|S_1| \leq k-2 \leq r-1$. Thus if $N_{H-C'-P'}(u_{\ell})$ is nonempty, then $|N_{H-C'-P'}(u_{\ell})| = r-1$. That is, $d_{H-C'-P'}(u_{\ell}) \leq 1$ and we obtain Part (ii).
Part (iii) follows from Claim~\ref{allneighbors}.

The ``moreover" part comes from the fact that that $\{u_i': u_{\ell}' \in f_i'\} \subseteq S_1 \cup S_2$. But if $d_{P'}(u_{\ell}') = k-1 = |S_1 \cup S_2|$ then $S_1 \subseteq \{u_i': u_{\ell}' \in f_i'\}$.
\end{proof}
Finally we are ready to show that the paths in lollipops cannot be long. Recall the statement of Lemma~\ref{nlem3}.

{\bf Lemma~\ref{nlem3}.} {\em Let $(C,P)$ be a best lollipop in $H$. If $|V(C)|=c$, $|E(P))|=\ell$ and
 $c < \min\{2k,n,|E(H)|\}$, 
then $\ell < k$.}

\begin{proof}[Proof of Lemma~\ref{nlem3}.]
Suppose  that $\ell \geq k$.
If $r \geq k$, then by Corollary~\ref{smalldeg}, \[d_H(u_{\ell}) \leq d_{H'}(u_{\ell}) + d_{C-P}(u_{\ell}) + d_P(u_{\ell}) \leq 0 + 0 + k-1 < \delta(H).\] So  assume $r=k-1$. Let $u_{i+1}$ be an eligible vertex. By Corollary~\ref{smalldeg} applied to $(C,P_{i+1})$ (note that $E(P) = E(P_{i+1})$), $d_{C-P}(u_{i+1}) = 0$, and therefore $d_P(u_{i+1}) = k-1$ and $d_{H'}(u_{i+1}) = 1$. Let $e \in E(H')$ contain $u_{\ell}$. Say $e=\{u_{j_1}, \ldots, u_{j_r}\}$ where $j_1 < \ldots < j_r = \ell$. 
Also by Corollary~\ref{smalldeg}, $u_{\ell}$ belongs to each of the edges $f_{j_1}, \ldots, f_{j_r}$.

If possible,  we choose a best lollipop $(C,P)$ such that $j_1 > 0$.

{\bf Case 1}: $j_1 > 0$. Consider the incidence graph $I_H$.
Set $X = C ( = V(C) \cup E(C))$ and $Y = P[u_{j_1}, u_{\ell}]$. 
As in the proof of Lemma~\ref{maindirac} we apply the modification of Dirac's Lemma, Lemma~\ref{diraclemma}, to the (graph) path 
\[P' = v_1, e_1, \ldots, v_c, f_0, \ldots, f_\ell, u_{\ell}\] in $I_H$ with $z = u_{j_1}$. After modification as detailed in the proof of Lemma~\ref{maindirac}, we obtain two disjoint, aligned (Berge or partial Berge) paths $P_1'$ and $P_2'$ in $H$ such that $P_1'$ starts in $X$, ends in $u_{j_1}$ and is internally disjoint from $X \cup Y$, and $P_2'$ starts in $X$, ends in a vertex $u_i \in \{u_{j_1+1}, \ldots, u_{\ell}\}$ and is internally disjoint from $X \cup Y$ except possibly in its last edge, only if that edge is $f_{i-1}$. 

Suppose $a_1$ and $b_1$ are the first elements of $P_1'$ and $P_2'$ respectively, and let $Q$ be a long $a_1,b_1$-segment of $C$ provided by Claim~\ref{longQ}. If $a_1$ and $b_1$ are both vertices, then $|V(Q)| \geq c-(k-2)$, otherwise if at least one of $a_1$ or $b_1$ is in $E(C)$, then $|V(Q)| \geq c-(k-1)$. 

Let $\alpha$ be the largest index smaller than $i$ such that $u_\alpha \in e$. Consider the cycle
\[C^* = a_1, Q, b_1, P_2', u_i, P[u_i, u_{\ell}], u_{\ell}, e, u_{\alpha}, P[u_{\alpha}, u_{j_1}], u_{j_1}, P_1', a_1.\]

This cycle contains the set $U=\{u_{j_1}, \ldots, u_{\alpha}\} \cup \{u_i, \ldots, u_{\ell}\}$, and $e \subseteq U$. Since $j_1 > 1$,  $|U \cap V(C)| = 0$. 
 If $|U| > |e| = k-1$, then $|C^*| \geq c-(k-1) + |U| > c$, contradicting the choice of $C$ as a longest cycle. Therefore we may assume $e = U$.

We will show that \begin{equation}\label{1interval}
\hbox{\em $e$ contains $r$ consecutive vertices in $P$.}
\end{equation}

If not then $i > \alpha+1 \geq j_1 + 1$, and so $u_{i-1} \notin e$. Recall that $u_{\ell} \in f_{j_1}$, and $\{u_i, u_{i+1}\} \subset U = e$. 

If $f_{i-1} \notin E(P_2')$, set
\[C' = a_1, Q, b_1, P_2', u_i, f_{i-1}, \ldots, u_{j_1+1}, f_{j_1}, u_{\ell}, f_{\ell-1}, \ldots, u_{i+1}, e, u_{j_1}, P_1', a_1.\]

Otherwise set 
\[C' = a_1, Q, b_1, P_2'[b_1, f_{i-1}], f_{i-1}, u_{i-1}, f_{i-2}, \ldots, u_{j_1+1}, f_{j_1}, u_{\ell}, f_{\ell-1}, \ldots, u_i, e, u_{j_1}, P_1', a_1.\]

The cycle $C'$ contains $e \cup \{u_{i-1}\}$. Therefore $|C'| \geq c-(k-1) + (k-1 + 1) > c$, a contradiction. This proves~\eqref{1interval}. In particular, $u_{\ell-1} \in e$. 

By the choice of $(C,P)$ as a best lollipop, if we swap edge $e$ with $f_{\ell-1}$ in $P$ we obtain another lollipop with $f_{\ell-1}$ taking the place of $e \in E(H')$. By the case, $e \subseteq V(P) - V(C)$ and so by Rule (R4) in the choice of $(C,P)$, $f_\ell \subseteq V(P) - V(C)$ as well. In particular, the first vertex of $f_{\ell-1}$ in $P$ is not $u_0$. Thus by symmetry, $f_{\ell-1}$ also contains $r$ consecutive vertices in $P$. But this implies $f_{\ell-1} = e$ since both edges end in $u_{\ell}$, a contradiction.



{\bf Case 2}: In every best lollipop, $j_1 = 0$. By Claim~\ref{allneighbors}, all best lollipops are o-lollipops. 

Recall that for every eligible $u_{i+1}$, the lollipop $(C,P_{i+1})$ is a best lollipop and the path $P_{i+1}$ begins with $u_0, f_0$. By the case, $u_0 \in N_{H'}(u_{i+1})$, and so by the ``moreover'' part of Corollary~\ref{smalldeg} applied to $(C,P_{i+1})$, $u_{i+1} \in f_0$ for every eligible $u_{i+1}$. Thus \[r= |f_0| \geq |\{u_0\} \cup \{u_{i+1}: u_{\ell} \in f_i\}| \geq 1 + d_{P}(u_{\ell}) =k,\]
which contradicts that $r= k-1$. 
\end{proof}

\section{Proof of Lemma~\ref{nlem2}: short paths in lollipops imply short paths in dcp-pairs}

%
We have shown that the paths in best lollipops must be short. Recall that a disjoint cycle-path pair, or a dcp-pair, is a cycle and a path that share no defining vertices or edges.  We now show that all $2$-good dcp-pairs must also have short paths. This will be useful for us because it implies that since $r$ is large, edges intersecting the path must ``stick out" of the path.
We restate the contrapositive of Lemma~\ref{nlem2} in the following slightly stronger form.

{\bf Lemma~\ref{nlem2}$'$.} {\em Suppose $2\leq s\leq k\leq r+1$.
Let $(C,Q)$ be a $2$-good dcp-pair in a 2-connected $n$-vertex $r$-graph $H$
with $c(H)<\min \{n,2k,|E(H)|\}$ and $\delta(H)\geq k$.
If $|V(Q)|\geq s$, then $H$ contains a lollipop $(C,P)$  with  $|E(P)|\geq s$.}

\medskip
\begin{proof} Suppose $(C,Q)$ is  a $2$-good dcp-pair, say
$Q=u_1,f_1,u_2,\ldots,f_{q-1},u_{q}$ with $q\geq s$. Let $H'$ be the subhypergraph with edge set $E(H) - E(C) - E(Q)$.  If the lemma does not hold, then
\begin{equation}\label{noC}
\mbox{ \em no edge in $E(H)-E(Q)$ containing $u_1$ or $u_{q}$ intersects $V(C)$,}
\end{equation}
otherwise we already obtain a desired o-lollipop or p-lollipop.
By the maximality of $|V(Q)|$,
\begin{equation}\label{noCH}
\mbox{ \em 
  each $g\in E(H')$ containing $u_1$ or $u_{q}$
 does not meet $V(H)-V(C)-V(Q)$, thus by~\eqref{noC}, $g\subseteq V(Q)$
 .}
\end{equation}
 Let $R=w_1,g_1,w_2,\ldots,g_{t-1},w_t$ be a shortest path in $H$ from $V(C)$ to $V(Q)$. If $g_{t-1}=f_j \in E(Q)$ for some $j$ then we let $w_t=u_{j}$.
Rename the vertices of $C$ so that $w_1=v_c$ and $w_t=u_{j}$ for some $1\leq j\leq q-1$. Combining with $R$ any path with $s$ vertices starting with $u_j$ that is otherwise disjoint from $C$ and $R$ yields a lollipop as desired. 
If $j\geq s$, we take the path $Q[u_j, u_1]$. So, $j\leq s-1$.

{\bf Case 1:}  $d_{H'}(u_1)=0$. Let $j'$ denote the maximum $i$ such that $u_1\in f_{i}$. By the case, $j'\geq d_H(u_\ell) \geq k\geq s>j$, so  path
$u_j,f_{j-1},\ldots,f_1,u_1,f_{j'},u_{j'},f_{j'-1},\ldots,u_{j+1}$ has at least $k \geq s$ vertices and does not use $f_j$.

{\bf Case 2:} 
 $d_{H'}(u_1)\geq 2$ or the unique edge $h\in H'$ containing $u_1$ is not $\{u_1,\ldots,u_r\}$.
Let $j'$ denote the maximum $i$ such that some edge  $h\in H'$ contains $u_1$ and $u_i$.
  Then
  $j'\geq r+1>j$, so the path
$u_j,f_{j-1},\ldots,f_1,u_1,h,u_{j'},f_{j'-1},\ldots,u_{j+1}$ has at least $r+1\geq k$ vertices and does not use $f_j$.

{\bf Case 3:} 
The unique edge $h\in H'$ containing $u_1$ is  $\{u_1,\ldots,u_r\}$. Now, $f_1\neq \{u_1,\ldots,u_r\}$, and by~\eqref{noCH}, $f_1\subset V(Q)$.  So, switching $h$ with $f_1$, we get Case 2.
\end{proof}


\section{Proof of Lemma~\ref{nlem0}: Disjoint cycle-path pairs have non-trivial paths}
\label{secl1}

In this section, we show that the path  $P$ in a $3$-good dcp-pair
must have at least 2 vertices. 
Let $(C,P)$ be such a dcp-pair. If $|C| < n$, there is a vertex outside of $C$, thus $\ell \geq 1$.

We will show that $\ell \geq 2$ using the notion of {\em expanding} sets that can be used to modify $C$ into a longer cycle.
Indeed, suppose $\ell=1$ and $P= u_1$.

Let $C'$ be any longest cycle in $H$ (i.e., $|C'| = |C| = c$), and let $u \in V(H) - V(C')$.  
 Say that a set $W\subseteq V(C')$ is {\em $(u,C')$-expanding} 
if for every distinct $v_{i},v_{j}\in W$, there is a $v_{i},v_{j}$-path $R(v_{i},v_{j})$
whose internal vertices are disjoint from $V(C')\cup \{u\}$ and all edges are in $E(H)- E(C')$. One example of a  $(u,C')$-expanding set is $V(C')\cap g$ where $g$ is any edge in $E(H)- E(C')$. 
\begin{equation}\label{expset}
\mbox{\em Another  example is a set of the form $N_{H-C'}(w)\cap V(C')$ for a vertex $w\in V(H)-V(C')-\{u\}$.}
\end{equation}
\begin{claim}\label{no2}Let $(C',u)$ be a 2-good dcp-pair (so $C'$ is a longest cycle in $H$) with $C' = v_1, e_1, \ldots, v_c, e_c, v_1$.
For any  $(u,C')$-expanding set $W$,
$u$ is contained in at most one edge of $\{e_j: v_j \in W\}$ and in at most one edge of $\{e_{j-1}: v_j \in W\}$.

Moreover, suppose $v_j \in W$ and $u \in e_j$. For every $v_i \in W - \{v_j\}$, if $e \in E(H) - E(C') - E(R(v_i, v_j))$ contains $u$, then $v_{i+1} \notin e$. Similarly, if $u \in e_{j-1}$, then $v_{i-1} \notin e$.  
\end{claim}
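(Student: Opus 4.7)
\emph{Proof proposal.} My plan is to prove each assertion by contradiction: if the conclusion fails I will exhibit a Berge cycle strictly longer than $C'$, contradicting its maximality among cycles of $H$.

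For the first part, suppose there exist distinct $v_j, v_{j'} \in W$ with $u \in e_j \cap e_{j'}$; say $j < j'$. The expanding property of $W$ supplies a path $R = R(v_j, v_{j'})$ whose internal vertices avoid $V(C') \cup \{u\}$ and whose edges lie in $E(H) - E(C')$. Form the Berge cycle
\[
C^* \;=\; v_1, e_1, v_2, \ldots, v_j,\, R,\, v_{j'},\, e_{j'-1}, v_{j'-1}, \ldots, e_{j+1}, v_{j+1},\, e_j,\, u,\, e_{j'},\, v_{j'+1}, \ldots, v_c, e_c, v_1.
\]
Every edge of $C'$ appears once (with $e_j$ projected as $\{v_{j+1}, u\}$ and $e_{j'}$ as $\{u, v_{j'+1}\}$), together with the edges of $R$, so the defining edges are pairwise distinct; the defining vertices are $V(C') \cup \{u\}$ together with the internal vertices of $R$, a set of size at least $c+1$. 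Hence $|C^*| > |C'|$, a contradiction. The parallel statement for $\{e_{j-1} : v_j \in W\}$ follows from the mirror construction, now projecting $e_{j-1}$ as $\{v_{j-1}, u\}$ and $e_{j'-1}$ as $\{u, v_{j'}\}$ on the detour through $u$.

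For the moreover clause, suppose $u \in e_j$, $v_i \in W - \{v_j\}$, and $e \in E(H) - E(C') - E(R(v_i, v_j))$ with $\{u, v_{i+1}\} \subseteq e$. Assume first $i < j$ and set $R = R(v_i, v_j)$. The Berge cycle
\[
\widetilde C \;=\; v_1, e_1, \ldots, v_i,\, R,\, v_j,\, e_{j-1}, v_{j-1}, \ldots, e_{i+1}, v_{i+1},\, e,\, u,\, e_j,\, v_{j+1}, \ldots, v_c, e_c, v_1
\]
uses every edge of $C'$ except $e_i$, together with the edges of $R$ and the edge $e$; these are pairwise distinct precisely by the hypothesis $e \notin E(C') \cup E(R)$, and again the defining-vertex set is $V(C') \cup \{u\}$ plus the internal vertices of $R$, giving length at least $c+1$, a contradiction. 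The case $i > j$ is handled by the analogous re-routing $v_1, \ldots, v_j, R(v_j, v_i), v_i, e_{i-1}, \ldots, e_{j+1}, v_{j+1}, e_j, u, e, v_{i+1}, \ldots, v_c, e_c, v_1$. The parallel ``$u \in e_{j-1}$'' statement is obtained by swapping $e_j$ with $e_{j-1}$ (projected as $\{v_{j-1}, u\}$) and $v_{i+1}$ with $v_{i-1}$ throughout.

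I do not anticipate a substantive obstacle; the entire argument is cycle-construction bookkeeping. The only care needed is verifying pairwise distinctness of the defining edges and confirming that $u$ (along with any internal vertices of $R$) is genuinely new to the defining-vertex set. Degenerate indices such as $j' = j+1$, $j = 1$, or wrap-around endpoints collapse certain sub-arcs to empty subpaths without affecting the edge-count or vertex-count inequalities.
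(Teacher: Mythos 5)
Your proof is correct and follows the paper's argument exactly: for both the main assertion and the ``moreover'' clause you re-route $C'$ through $u$ and through the connecting path $R$ supplied by the expanding property, obtaining a Berge cycle with at least $c+1$ defining vertices, and the hypothesis $e \notin E(C') \cup E(R)$ is precisely what keeps the defining edges distinct. The paper only writes out the $i<j$ case and dismisses the rest as symmetric; you have spelled out the mirror constructions, but the substance is the same.
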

\begin{proof}
Suppose $v_i, v_j \in W$ and $u \in e_i, e_j$. By symmetry we may assume $i < j$. The cycle
\[C''=v_1, \ldots, v_i, R(v_i, v_j), v_j, e_{j-1}, \ldots, v_{i+1}, e_i, u, e_j, v_{j+1}, e_{j+1}, \ldots, v_c, e_c, v_1\] has length $|C'| + 1 > c$, a contradiction. The case for $u \in e_{i-1}, e_{j-1}$ is symmetric.

For the ``moreover" part, let us suppose $e \in E(H) - E(C') - E(R(v_i, v_j))$ contains $u$ and $v_{i+1}$. Then replacing edge $e_i$ in $C''$ with $e$ also yields a  cycle longer than $C'$. 
\end{proof}
%
%
We now apply this claim to the 3-good dcp-pair $(C,P)$ with $C = v_1, e_1, \ldots, v_c, e_c, v_1, P = u_1$. Define 
 $A=N_{H'}(u_1)$, $a=|A|$, $B = \{e_j \in E(C), u_1 \in e_j\}$, and $b = |B|$. We obtain the following.

\begin{claim} \label{no3}Suppose $W$ is a $(u_1, C)$-expanding set.

(i) If the edges of $B$ form exactly $q$ intervals in $C$, then $|W| \leq c-(b+q) +2$.

(ii) If the vertices in $W$ form exactly $q'$ intervals in $C$, then $b \leq c-(|W|+q') + 2$.
\end{claim}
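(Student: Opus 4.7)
The plan is to prove both parts of Claim~\ref{no3} by one symmetric double-counting argument on the cycle $C$, with Claim~\ref{no2} as the only nontrivial input. Intuitively, Claim~\ref{no2} says that most vertices of $W$ avoid $B$-edges on both sides, so $|W|$ is forced into the interior of the ``gaps'' between $B$-intervals; dually, most $B$-edges have both endpoints outside $W$, so $b$ is forced into the gaps between $W$-intervals.

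For part (i), the key observation is that by Claim~\ref{no2}, at most one $v_j \in W$ has $e_j \in B$ and at most one $v_j \in W$ has $e_{j-1} \in B$. Hence all but at most two vertices of $W$ satisfy that \emph{both} incident cycle edges lie outside $B$. I then count such ``doubly-outside'' vertices directly: if $B$ consists of $q$ intervals on $C$ and $0<b<c$, then the non-$B$ edges form $q$ gaps comprising $c-b$ edges in total, and a vertex $v_j$ has $e_{j-1}, e_j \notin B$ precisely when it lies strictly in the interior of one of these gaps; a gap of length $g$ contributes $g-1$ interior vertices, giving $(c-b)-q = c-(b+q)$ doubly-outside vertices overall. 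Combining the two bounds yields $|W| \le c-(b+q)+2$.

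Part (ii) is obtained by dualizing, interchanging the roles of vertices and edges on $C$. Let $E_W$ denote the set of cycle edges incident to some vertex of $W$. If $W$ forms $q'$ intervals on $C$ with $|W|<c$, then each interval of length $s_i$ contributes $s_i+1$ cycle edges to $E_W$, so $|E_W|=|W|+q'$ and $|E(C)\setminus E_W|=c-(|W|+q')$. Claim~\ref{no2} now gives that at most one $e_j\in B$ has $v_j\in W$ and at most one $e_j\in B$ has $v_{j+1}\in W$; in particular $|B\cap E_W|\le 2$, so $b = |B\cap E_W|+|B\setminus E_W| \le 2+c-(|W|+q')$.

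The only care needed is handling the degenerate cases $b\in\{0,c\}$ or $|W|\in\{0,c\}$, but these are immediate from Claim~\ref{no2} (for instance if $b=c$, then every $v_j\in W$ has $e_j\in B$, forcing $|W|\le 1$, which matches the formula under the natural convention $q=1$). There is no real obstacle; the entire content of the claim is packaged in Claim~\ref{no2}, and the rest is bookkeeping about interior vertices of gaps between consecutive intervals.
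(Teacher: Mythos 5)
Your proof is correct and is essentially the paper's argument: for (i) the paper bounds $|W\cap V|\le 2$ where $V=\bigcup_{e_i\in B}\{v_i,v_{i+1}\}$ has size $b+q$, which is exactly the complement of your count of ``doubly-outside'' vertices, and your part (ii) is the natural dualization that the paper leaves implicit with ``the proof of (ii) is similar.'' Both rest on the same two-sided application of Claim~\ref{no2}, so there is no real difference in method.
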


\begin{proof}
Set $V = \bigcup_{e_i \in B} \{v_i, v_{i+1}\}$. We have $|V| = b + q$. By Claim~\ref{no2}, $W$ contains at most one vertex $v_i$ for which $e_i \in B$ and at most one vertex $v_{i}$ for  which $e_{i-1} \in B$. Thus $|W \cap V|  \leq 2$ and $|W| \leq c - |V| + 2$. This proves (i). The proof of (ii) is similar.
\end{proof}


Now we are ready to prove Lemma~\ref{nlem0}.  Recall the statement.

{\bf Lemma~\ref{nlem0}.} {\em 
Let $(C,P)$ be a $3$-good dcp-pair in $H$. If $|V(C)|=c$, $|V(P))|=\ell$ and
 $c < \min\{2k,n,|E(H)|\}$, 
then $\ell \geq 2$.
}


\begin{proof} Suppose $P=u_1$. By Rule (R3),  $u_1$ is a vertex in $V(H)-V(C)$   with the largest degree in $E(C)$.
 If there exists $ e \in E(H) - E(C)$ containing at least 2 vertices $u, u' \notin V(C)$, then  
the path $P'=u,e,u'$, the pair $(C, P')$ is a better dcp-pair than $(C,P)$. It follows that
\begin{equation}\label{1c}
\hbox{for each $e \in E(H) - E(C)$, $|e \cap V(C)| \geq r-1$.}
\end{equation}

In particular, this yields
 $N_{H'}(u_1) \subseteq V(C)$.
By Claim~\ref{bigsmallcycle}, $A$ does not intersect the set $\bigcup_{e_i \in B} \{v_i, v_{i+1}\}$
and
 \begin{equation}\label{vcr} 
\mbox{\em no two vertices of $A$ are consecutive on $C$; thus $ 2k-1\geq c\geq 2a+b$.}
\end{equation}  
 It follows that
 \begin{equation}\label{abr} 
a \leq \lfloor c/2 \rfloor \text{ and } k\leq d_H(u_1)\leq {a\choose r-1}+b\leq {a\choose r-1}+c-2a.
\end{equation}

 \medskip
{\bf Case 1:}  $d_{H'}(u_1)\geq 2$. Then   $a\geq (r-1)+1$. If $r \geq k$, then $\lfloor c/2 \rfloor \geq a \geq r \geq k$, contradicting to $c < 2k$. Thus we may assume $k=r+1$. Now~\eqref{vcr} yields $a=r$, $c=2k-1$ and $b\leq 1$. On the other hand,~\eqref{abr} yields $b\geq 1$.
So, $b=1$ and all  $r$-tuples of vertices containing $u_1$ and contained in $A\cup \{u_1\}$ are edges of $H'$. By symmetry, we may assume that $B=\{ e_{2k-1}\}$ and $A=\{v_2,v_4,\ldots,v_{2k-2}\}$. Since $k= r+1$, for every $1\leq j\leq k-1$ we can choose an edge $g_{2j}\in E(H')$ containing $u_1$ and $v_{2j}$ so that all $g_{2j}$ are distinct. We also let $g_1=g_{2k-1}=e_{2k-1}$.

\begin{claim}\label{oddneighbors}Let $1 < i < 2k-1$ be odd. Then (i) the only edge in $E(C)-\{e_i, e_{i-1}\}$ that may contain $v_i$ is $e_{2k-1}$, and (ii) $N_{H'}(v_i) \cap V(C) \subseteq A$. 
\end{claim}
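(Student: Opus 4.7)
The plan is to prove (i) and (ii) by contradiction: in each case, construct a Berge cycle of length $c+1=2k$ on $V(C)\cup\{u_1\}$, contradicting the choice of $C$ as a longest cycle (recall $c=2k-1$). The flexibility that enables the construction comes from the Case~1 structure: since $d_{H'}(u_1)=k-1$ and $|A|=k-1$, and each $H'$-edge through $u_1$ has its remaining $r-1=k-2$ vertices in $A$, every $(k-2)$-subset of $A$ together with $u_1$ is in $E(H')$. Thus for any two distinct $v_{2a},v_{2b}\in A$, the matching edges $g_{2a},g_{2b}$ (chosen so that all $g_{2j}$ are distinct) satisfy $g_{2a}\neq g_{2b}$, allowing us to splice $u_1$ between $v_{2a}$ and $v_{2b}$ in any prospective cycle.

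For (i), suppose $v_i\in e_j$ for some $j\in\{1,\dots,2k-2\}\setminus\{i-1,i\}$. Split on the parity of $j$ and assume by symmetry that $j>i$ (the case $j<i$ is analogous). When $j$ is \emph{odd}, both $v_{i+1}$ and $v_{j+1}$ lie in $A$, and
\[
v_1,e_1,\dots,v_i,e_j,v_j,e_{j-1},\dots,v_{i+2},e_{i+1},v_{i+1},g_{i+1},u_1,g_{j+1},v_{j+1},e_{j+1},\dots,v_c,e_c,v_1
\]
is a Berge cycle: $e_j$ is used as the chord $v_i\text{--}v_j$, the interior arc $v_{i+1},\dots,v_j$ is traversed in reverse, and $u_1$ is spliced in between $v_{i+1}$ and $v_{j+1}$. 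When $j$ is \emph{even}, both $v_{i-1}$ and $v_j$ lie in $A$, and
\[
v_1,e_{2k-1},v_{2k-1},\dots,v_{j+1},e_j,v_i,e_i,\dots,v_{j-1},v_j,g_j,u_1,g_{i-1},v_{i-1},e_{i-2},\dots,e_1,v_1
\]
works: it sweeps backward from $v_1$ to $v_{j+1}$, uses $e_j$ as the chord $v_{j+1}\text{--}v_i$, continues forward through $v_i,\dots,v_j$, routes through $u_1$ via $g_j$ and $g_{i-1}$, and finally sweeps back through $v_{i-1},\dots,v_2$ to $v_1$. In each subcase exactly one $C$-edge is dropped ($e_i$ or $e_{i-1}$) and two $g$-edges are added, giving $2k$ distinct defining vertices and $2k$ distinct defining edges.

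For (ii), suppose $h\in E(H')$ contains $v_i$ and some odd $v_p\neq v_i$; by the analogous symmetry assume $p>i$. First $u_1\notin h$, since otherwise $v_i\in N_{H'}(u_1)=A$ would contradict that $v_i$ is odd; in particular $h$ differs from every $g_{2m}$. Since $v_{p-1},v_{i-1}\in A$, the cycle
\[
v_1,e_{2k-1},\dots,v_{p+1},e_p,v_p,h,v_i,e_i,\dots,v_{p-2},v_{p-1},g_{p-1},u_1,g_{i-1},v_{i-1},e_{i-2},\dots,e_1,v_1
\]
has length $2k$: $h$ is used as the chord $v_p\text{--}v_i$, the edges $e_{p-1}$ and $e_{i-1}$ are dropped, and $h,g_{p-1},g_{i-1}$ are added. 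The boundary values $p=2k-1$ and $p=1$ only simplify the construction: for $p=2k-1$ the initial backward sweep is a single edge $e_{2k-1}$, and for $p=1$ the step $g_{p-1}$ is replaced by $e_{2k-1}$, which contains $u_1$.

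The main obstacle is the subcase-by-subcase bookkeeping. In each case one must verify (a) that the $2k$ listed defining vertices are pairwise distinct, which follows from the disjointness of the index-ranges $\{1\}$, $\{j+1,\dots,2k-1\}$, $\{i,\dots,j\}$, $\{2,\dots,i-1\}$ (or their $p$-analogues) in $\{1,\dots,2k-1\}$; (b) that the $2k$ listed defining edges are pairwise distinct, which uses the matching property of the $g_{2m}$ together with $u_1\notin h$; and (c) that each consecutive pair of defining vertices lies in its claimed edge, automatic for cycle edges, and given by hypothesis for the chord $e_j$ or $h$.
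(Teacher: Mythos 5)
Your proof is correct and follows essentially the same route as the paper: splice $u_1$ into the cycle between two even-indexed vertices of $A$ using two distinct matching edges $g_{2m}$, use the hypothesized edge ($e_j$ or $h$) as a chord, and check that exactly one (resp.\ two) cycle edges are dropped while two (resp.\ three) new edges are added, yielding a Berge cycle of length $c+1=2k$. The only differences are cosmetic choices of traversal direction and your explicit treatment of the boundary indices $p\in\{1,2k-1\}$ in part (ii), which the paper handles implicitly.
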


\begin{proof}
Suppose $v_i \in e_j$ for some $j$. By symmetry, we may assume $j > i$. If $j$ is even then set 
$$C_j = v_i, e_i, \ldots, v_j, g_j, u_1, g_{i-1}, v_{i-1}, e_{i-1}, \ldots, v_{j+1}, e_j, v_i,$$
 and if $j$ is odd, then set 
 $$C_j' = v_i, e_{i-1}, v_{i-1}, \ldots, v_{j+1}, g_{j+1}, u_1, g_{i+1}, v_{i+1}, e_{i+1}, \ldots, v_j, e_j, v_i.$$
 
 Then each of  $C_j$ and $C_j'$ contains $c+1$ vertices, contradicting the choice of $C$. This proves (i). For (ii), suppose some $h
\in E(H')$ contains $v_i$ and $v_j$ where $j$ is odd. Then replace edge $e_j$ in the cycle $C_j'$ above with $h$ to obtain a cycle longer than $C$.
\end{proof}

Since $|e_{2k-1} - \{u_1\}| = r-1 = k-2$, there exists $v_{i}\notin e_{2k-1}$ where $1 < i < 2k-1$ is odd. By Claim~\ref{oddneighbors}, $k \leq \delta(H) \leq d_H(v_i) \leq 2 + d_{H'}(v_i)$, and therefore $d_{H'}(v_i) \geq k-2 =(r+1)-2 \geq 2$. Then $(r-1) + 1 \leq |N_{H'}(v_i)| \leq |A| = k-1$, and so $N_{H'}(v_i) = A$. 

Set $C' = v_1, e_1, \ldots, v_{i-1}, g_{i-1}, u_1, g_{i+1}, v_{i+1}, e_{i+1}, \ldots, v_1$. We consider $(C', v_i)$ which is a 2-good dcp-pair that does not use the edges $e_{i-1}, e_i$. If one of these edges, say $e_i$, contains a vertex $w \notin V(C) \cup \{v_i\}$, then $(C', u_1, e_i, w)$ is a better p-lollipop than $(C,P)$. 

On the other hand, if $e_i \cup e_{i-1} \subseteq A$, then $d_H(v_i) \leq { |A| \choose r-1}  < k \leq \delta(H)$, a contradiction. By the previous claim, the only vertices in $V(C) - A - \{v_i\}$ that may belong to $e_i$ or $e_{i-1}$ are $v_1$ and $v_{2k-1}$. Without loss of generality, let $v_1$ belong to $e_j \in \{e_{i-1}, e_i\}$. Since $v_2 \in A$, there exists $g \in E(H')$ containing $\{v_2, v_i\}$ (and note that $g \neq g_{i-1}, g_{i+1}$ because $v_i \notin N_{H'}(u_1)$). Then replacing in $C'$ the segment $v_1,e_1,v_2$ with $v_1, e_j, v_i, g, v_2$ yields a longer cycle than $C$.

\medskip
{\bf Case 2:} $d_{H'}(u_1)=1$, say  $u_1\in f\in E(H')$. Then $d_H(u_1)=1+b$, $A=f\cap V(C)$ and $a=r-1$. 
  By~\eqref{abr},
$$k\leq d_H(u_1)\leq 1+c- 2(r-1)\leq 2k-2r+2.$$
 When $k > 2$, this is possible only if $k = r+1$, $r=3$, $c=2r+1=7$ and $d_H(u_1)=k=4$.
In this case we have $c = 2k-1 = 7$. 
Let $e \in E(H')$ contain $u_1$, say $e = \{u_1, v_{i}, v_j\}$. Since by Claim~\ref{bigsmallcycle}, $e$ cannot contain consecutive vertices in $C$,  by symmetry we may assume that $i=1$ and $j \in\{5,6\}$. By Claim~\ref{bigsmallcycle}, $u_1$ cannot be contained in the 4 distinct edges $e_1, e_7, e_j, e_{j-1}$ incident to $v_1$ and $v_j$. Thus as $d_H(u_1) \geq k = 4$ and $d_{H'}(u_1) = 1$, $u_1$ is contained in the remaining $7-4 = 3$ edges of $C$. That is, $u_1$ belongs to the edges $e_2, e_3$, and $e_k$ where $k=6$ if $j=5$ and  $k = 4$ if $j=6$. Let $C'$ be the cycle obtained by swapping $v_3$ with $u_1$ and observe that $(C', v_3)$ is a 2-good dcp-pair.  

The set $e=\{v_1, v_j, u_1\}$ forms a $(v_3,C')$-expanding set. Because $u_1 \in e$ now takes the place of $v_3$ in $C'$ and $v_3 \in e_2, e_3$, by Claim~\ref{no2} $v_3$ cannot belong to the edges $e_1, e_7, e_j, e_{j-1}$ and cannot be an $H'$-neighbor to $v_{2}, v_7, v_{j-1}$, or $v_{j+1}$ (here we use the fact that $v_3 \notin e$). Moreover by Claim~\ref{bigsmallcycle}, $N_{H'}(v_3)$ also does not contain vertices $u_1$ and $v_4$ which are incident to $e_2$ and $e_3$ respectively. 

This yields the following two facts: the only  edge of $C'$ other than $e_2$ and $e_3$ that may contain $v_3$ is $e_k$, and the only $H'$-edge containing $v_3$ may be $\{v_3, v_1, v_j\}$. Recall that $u_1 \in e_k$ and therefore $e_k = \{v_k, v_{k+1}, u_1\}$ which does not contain $v_3$. It follows that $d_H(v_3) \leq d_C(v_3) + d_{H'}(v_3) \leq 2 + 1 < k = \delta(H)$, a contradiction.


\medskip
{\bf Case 3:} $d_{H'}(u_1)=0$ for some $u_1\in V(H)-V(C)$. 
So, 
  $d_H(u_1)=b$. 
  If $u_1$ is the only vertex outside of $C$, then $r\geq\lfloor \frac{n-1}{2} \rfloor$, so by Theorem~\ref{mainold2}, $H$ has a Hamiltonian cycle, a contradiction. Hence, 
  \begin{equation}\label{c+2}
  \mbox{\em $n= c+x$\qquad for some $x\geq 2$.} 
  \end{equation}

  Let $w\in V(H)-(V(C)\cup \{u_1\})$. 
  Let us show that
  \begin{equation}\label{d(w)}
d_{H'}(w)\leq 1. 
\end{equation}
  Indeed, suppose $g_1,g_2\in E(H')$ and $w\in g_1\cap g_2$. Let $W=V(C)\cap (g_1\cup g_2)$. As observed in~\eqref{expset}, this $W$ is $u_1$-expanding. Since $g_2\neq g_1$, by~\eqref{1c}, $|W|\geq r$. Also, by Claim~\ref{shortpath},
  vertices in $g_2$ could not be next to  vertices in $g_1$ on $C$. Thus if $|W|= r$, then $|g_1\cap g_2|=r-1$; hence no two vertices  of $W$ are consecutive on $C$. In this case, by Claim~\ref{no3}(ii),
 $b\leq c-|W|-q+2$ where $q=|W|=r$.  Since $k\leq r+1$,  we get
 $k\leq  d_H(u_1)\leq (2k-1)-2r+2\leq 2k-r-2$ contradicting $k\leq r+1$. 
 
 Thus $|W|\geq r+1$. But still since  vertices in $g_2$ could not be next to  vertices in $g_1$ on $C$, $q\geq 2$.
So,
 $k\leq  d_H(u_1)\leq (2k-1)-(r+1+2)+2= 2k-r-2$ contradicting $k\leq r+1$. This proves~\eqref{d(w)}.

 Let $n=c+x$ and $|E(H)|=c+y$. Then considering the sum of degrees of vertices in $H$, we have
  \begin{equation}\label{c+x}
  k(c+x)=k\cdot n\leq \sum\nolimits_{v\in V(H)}d_H(v)=r(c+y).
  \end{equation}
 
 \medskip
By~\eqref{c+2}, if $k\geq r$, then 
  \begin{equation}\label{+2}
y\geq x\geq 2.
\end{equation}
 Moreover, if $k=r+1$ then~\eqref{c+x} yields
$$y\geq \frac{(r+1)(c+x)}{r}-c= \frac{c+x}{r}+x=\frac{n}{r}+x>2+x.
$$
 So, if $Z=\{g_1,\ldots,g_z\}$ is the set of edges in $E(H')$ contained in $V(C)$,
  then~\eqref{d(w)} together with  $d_{H'}(u_1)=0$ yields 
   \begin{equation}\label{four}
\mbox{\em $y\leq z+x-1$, and hence $z\geq 1$ when $k\geq r$ and $z\geq 4$ when $k= r+1$.}
\end{equation}

 Suppose the edges of $B$ form exactly $q$ intervals in $C$.  Since $b=d_H(u_1)\geq k$, by Claim~\ref{no3}(i)
 $$r\leq |W| \leq c - (b+q) + 2 \leq 2k-1 -(k+q) + 2 =k-q+1.$$
  As $k \leq r+1$, $q \leq 2$ with equality only if $k=r+1$ and $d_H(u_1) = k$. Let $J_B = \{j\in[c]: e_j \in B\}$. By symmetry we may assume $1 \in J_B$. 
%
%
  
  {\bf Case 3.1:} $J_B=\{1,\ldots,i_1\}\cup \{i_2,\ldots,i_3\}$ where 
  $ i_1+2\leq i_2\leq i_3\leq c-1$.
   In this case $q = 2$, so $k= r+1$ and $|J_B| = d_H(u_1) = k$. 
If some $g_j\in Z$ contains $v_i$ for some $i$ in the $k-2$-element subset $\{2,\ldots,i_1\}\cup \{i_2+1,\ldots,i_3\}$, then in order not to create a longer cycle, the indices of the  remaining $r-1$ vertices of $g_j$ cannot be in $J'_B= \{1,2,\ldots,i_1+1\}\cup \{i_2,\ldots,i_3+1\}$ by Claim~\ref{no2}.
Since $|J'_B|=k+2$, we need $c\geq (r-1)+(k+2)=2k$, a contradiction. So, each $g_j\in Z$ is contained in the set
$F=\{v_{i_1+1},\ldots,v_{i_2}\} \cup \{v_{i_3+1},\ldots,v_c,v_1\}$ which has size $c - (k-2) = k+3 = r+2$. Also, each $g_j$ contains at most one of $v_1,v_{i_2}$ and 
at most one of $v_{i_1+1},v_{i_3+1}$. There are exactly four such possibilities: $F-\{v_1, v_{i_1+1}\},
F-\{v_1, v_{i_3+1}\},F-\{v_{i_2}, v_{i_1+1}\},F-\{v_{i_2}, v_{i_3+1}\}$. So, $z=4$ and each $g_j$ is one of these possibilities, say $g_1=F-\{v_{i_2}, v_{i_1+1}\}$.
But then $g_1$  contains $\{v_c,v_1\}$, and we can replace $e_c$ with $g_1$, and again get a $3$-good dcp-pair. Since $g_1\neq e_c$, this is a contradiction.

  {\bf Case 3.2:} $J_B$ is a single interval for each $3$-good lollipop. Let
  $J_B=\{1,\ldots,b\}$ for $b \geq k$. 
%

First suppose $k \leq r$. Then $z \geq 1$, say $g_1 \in Z$.    
     If  $g_1$ contains a vertex $v_j$ with $2 \leq j \leq b$, then $g_1-v_j \subseteq \{v_{b+2}, \ldots, v_c\}$ by Claim~\ref{no2}. It follows that
   $r-1=|g_1-v_j|\leq c-(b+1)\leq (2k-1)-(k+1)=k-2$, contradicting $k\leq r$. Hence $g_1\subseteq \{v_{b+1},v_{b+2},\ldots,v_c,v_1\}$, and so $r=|g_1|\leq c-(b-1)\leq (2k-1)-(k-1)=k$. This is possible only it $c=2k-1$, $b=k$, $k=r$ and
   $g_1=\{v_{b+1},v_{b+2},\ldots,v_c,v_1\}$. Let $C'$ be obtained from $C$ by replacing $e_{c-1}$ with $g_1$. Then $(C',P)$ is also a $3$-good dcp-pair. We apply~\eqref{four} to $(C',P)$ to obtain that there exists some $g_1' \in E(H) - E(C')$ contained in $V(C)$
 (possibly $g_1'=e_{c-1}$). Then we contradict Claim~\ref{no2}, since $g_1'\neq g_1$.
   

Finally consider the case $k= r+1$. By~\eqref{four}, $z\geq 4$.
  First suppose $b \geq k+1$. If some $g_i \in Z$ contains a vertex $v_j$ with $2 \leq j \leq b$, then $g_i-v_j \subseteq \{v_{b+2}, \ldots, v_c\}$ by Claim~\ref{no2}. That is, $r= |g_i| \leq 1 + (c - (b+2) + 1) \leq k-2$, a contradiction. Thus each $g_i \in Z$ is a subset of $\{v_{b+1}, \ldots, v_c, v_1\}$ which has size at most $k-1= r$. We get a contradiction to $z \geq 4$.
  
 So we may assume $J_B = \{1, \ldots, k\}$.  Let $F=\{v_{k+2},\ldots,v_{c}\}$, $B'=E(C)-E(B)$
  and $B''=B'-e_{k+1}-e_c=\{e_{k+2},\ldots,e_{c-1}\}$. 
  
  Among all $c$-cycles $C$  with the same cyclic sequence $v_1,\ldots,v_c$ of vertices and containing edges $e_1,\ldots,e_k$ in the same positions, choose one in which
  
 (a) most edges in $B'$ are subsets of  $F'=F\cup \{v_{k+1},v_1\}=\{v_{k+1},\ldots,v_{c},v_1\}$ and
 
 (b) modulo (a), most edges in $B''$ contain $F$.
  
We claim that 
\begin{equation}\label{H'-Z}  
  \mbox{\em no edges in $E(H')-Z$ intersect $\{v_2,\ldots,v_k\}$.}
  \end{equation}
Indeed,  suppose  some $g_0\in E(H')-Z$ contains $v_i$ for some $2\leq i\leq k$. By~\eqref{1c}, there is a unique
$w\in g_0-V(C)$.    By~\eqref{d(w)}, the set $B_w$ of the edges of $C$ containing $w$ has at least $k-1$ edges.
By Claim~\ref{no2}, there is no  $i'\in \{1,\ldots,k+1\}-\{i\}$ such that $v_{i'}\in g_0$. Hence $g_0-v_i-w\subseteq F$. Since $|F|=r-1=|g_0-v_i-w|+1$, we may assume by symmetry that $v_c\in g_0$. Recall that if $v_j\in g_0$ then $e_{j-1},e_j \notin B_w$. This yields that $B_w\subseteq 
\{e_1,\ldots,e_{k+1}\}$. So, there is $1\leq i'\leq k$ such that $e_{i'}\in B_w$. As above, $i'\notin \{i-1,i\}$. By symmetry we may assume $i'>i$. Then  the cycle \[ v_1, e_1, v_2, \ldots, v_{i-1},e_{i-1}, u_1, e_{i'-1}, v_{i'-1}, e_{i'-2}, v_{i'-2}, \ldots,  v_{i}, g_0, w, e_{i'}, v_{i'+1}, e_{i'+1}, \ldots,  v_c, e_c, v_1
\]
omits $v_{i'}$ but goes through $w$ and $u_1$, thus is longer than $C$. This contradiction proves~\eqref{H'-Z}.

Next, we claim that 
\begin{equation}\label{H'+Z}  
  \mbox{\em no edges in $Z$ intersect $\{v_2,\ldots,v_k\}$.}
  \end{equation}
Indeed, suppose  some $g_1\in Z$ contains $v_i$ for some $2\leq i\leq k$. Then
   the indices of the  remaining $r-1$ vertices of $g_1$ cannot be in $J'_B= \{1,2,\ldots,k+1\}$.
   So, $g_1-v_i\subseteq F$, which yields $c=2k-1$, $k=r+1$, and $g_1=F+v_i$. Now, by choice of $C$, since we can switch $g_1$ with any  edge in $B''$,  each edge in $B''$ either is contained in $F'$ or  contains $F$. Since only one edge containing $F$ may also contain $v_i$,
  \begin{equation}\label{H'+Z'}  
  \mbox{\em  
   $v_i$ does not belong to any edge in $B''\cup (Z-g_1)$.  }
  \end{equation}
  Let cycle $C'$ be formed by swapping $u_1$ with $v_i$ and $g_1$ with  $e_{c-1}$, that is, 
\[C' = u_1, e_i, v_{i+1}, e_{i+1}, v_{i+2}, \dots , e_{c-2}, v_{c-1}, g_1, v_{c}, e_{c}, v_{1},\ldots,  e_{i-1}, u_1.\] 
By~\eqref{H'-Z} and~\eqref{H'+Z'},    $v_i$ is contained only in edges of $C'$, so $C'$ is an optimal choice of cycle under the same conditions as $C$. If the edges of $C'$ containing $v_i$ are not all consecutive  along $C'$, then we get Case 3.1 and arrive at a contradiction.
So they are consecutive. As $v_i$ is contained in $g_1$ (which plays the role of $e_{c-1}$ in $C'$), either $v_i \in e_c$ or $v_i \in e_{c-2}$. If $v_i \in e_c$ then the cycle
 \[C''= v_1, e_1, v_2, \ldots, v_{i-1},e_{i-1}, u_1, e_{i}, v_{i+1}, e_{i+1}, v_{i+2}, \ldots, v_{c-1}, e_{c-1}, v_{c}, g_1, v_i, e_{c},  v_1
\]
is longer than $C$. The case for $v_i \in e_{c-2}$ is similar. This proves~\eqref{H'+Z}.

By~\eqref{H'+Z}, all edges in $Z$ are contained in $F'$. Since $|Z|\geq 4$, each edge in $B''$ can be replaced with an edge in $Z$. Thus by Rule (a), each edge in $B''$ also is contained in $F'$. Then the $(r+1)$-element set $F'$ contains at least $|B''|+|Z|\geq r-2+4=r+2$ subsets of size $r$, a contradiction.
  \end{proof}

\section{Disjoint cycle-path pairs and cycle-cycle pairs}



We conclude the proof of Theorem~\ref{mainthm3} in this section by proving Lemma~\ref{nlem1}. Recall the statement.

{\bf Lemma~\ref{nlem1}.} {\em
Let $(C,P)$ be a $4$-good dcp-pair in $H$. If $|V(C)|=c$, $|V(P))|=\ell$ and
 $c < \min\{2k,n,|E(H)|\}$, 
then $\ell \geq k$.}

\medskip

When handling  dcp-pairs, we will also consider structures which are a bit richer,  called {\em disjoint cycle-cycle pairs}
or  {\em dcc-pairs} for short. A {\bf disjoint cycle-cycle pair}  $(C,P)$ is
obtained from a dcp-pair  $(C,P')$ with $P'=u_1, f_1, u_2, \dots , u_{\ell}$ by adding edge $f_\ell \notin E(C) \cup E(P')$ containing $\{u_1, u_\ell\}$. In other words, in a dcc-pair $(C,P)$, $P$ is a cycle $P=u_1, f_1, u_2, \dots , u_{\ell},f_\ell,u_1$ whose defining elements are disjoint from the defining elements of $C$.

We partially order dcp- and dcc-pairs together:
We say a pair $(C,P)$ {\em is better} than a pair $(C',P')$ if

  \begin{enumerate}
\item[(S1)]  $|V(C)|>|V(C')|$, or 
\item[(S2)] Rule (S1) does not distinguish $(C,P)$  from   $(C',P')$, and $|V(P)|>|V(P')|$; or

\item[(S3)] Rules (S1) and (S2) do not distinguish $(C,P)$  from   $(C',P')$, and the total number of vertices of $V(P)$ contained in the  edges of $E(C)$ counted with multiplicities is larger than the total number of vertices of $V(P')$ contained in the  edges of $E(C')$; or 

\item[(S4)] Rules (S1)--(S3) do not distinguish $(C,P)$ from $(C',P')$, $P$ is a cycle and $P'$ is a path; or

\item[(S5)] 
 Rules (S1)--(S4) do not distinguish $(C,P)$  from   $(C',P')$, and
 the number of edges in $E(P)$ fully contained in $V(P)$ is larger than the number of edges in $E(P')$ fully contained in $V(P')$. 
 \end{enumerate}

Since a dcc-pair is a dcp-pair $(C,P)$ with an additional edge added to $P$ and rules (S1)--(S3) are the same as  (R1)--(R3), all claims in Section~\ref{simple} that hold for $i$-good lollipops and dcp-pairs when $i\leq 3$ also hold for $i$-good 
dcc-pairs.

Let $(C,P)$ be a best dcp- or dcc-pair. Let $C=v_1, e_1, v_2, \dots, v_c$ and let $P= u_1, f_1, u_2, \dots u_{\ell}$ when $P$ is a path and $P= u_1, f_1, u_2, \dots u_{\ell},f_\ell,u_1$ when $P$ is a cycle. To prove Lemma~\ref{nlem1}, we consider the case $2 \leq \ell \leq k-1$, $3 \leq k \leq r+1$, and $\delta(H) \geq k$. 

Since $\ell \leq k-1\leq r$, at most one edge can be contained entirely in $V(P)$,
and this is possible only if $k=r+1$. Moreover, by (S5) in this case such an edge is in $E(P)$.
Thus any $H'$-edge containing $u_1$ or $u_\ell$ must be contained in $V(C) \cup V(P)$ and must intersect $V(C)$.
One benefit of dcc-pairs is that when $P$ is a cycle, then any two consecutive vertices can play the role of $u_1$ and $u_\ell$. In particular,
\begin{equation}\label{cyc1} \mbox{\em if $P$ is a cycle, then
 each  $h\in E({H'})\cup E(P)$ intersecting $V(P)$ is contained in $V(P)\cup V(C)$.}
\end{equation}

For all $j\in [\ell]$, we denote by $B_j$ the set of edges of $C$ containing $u_j$, and let $b_j=|B_j|$.

In the next subsection we prove a series of claims about vertices and edges in graph paths and cycles. We will apply these claims to the projections of the Berge cycle $C$ (that is, the graph cycle $v_1, e_1, \ldots, v_c, e_c, v_1$ where $e_i = v_iv_{i+1}$). We complete the proof of Lemma~\ref{nlem1} in the final two subsections, breaking into cases of dcp-pairs and dcc-pairs.

\subsection{Useful facts on graph cycles}\label{graph-c}


\begin{claim}\label{ver-new} Let $C=v_1,e_1,\ldots,v_s, e_s, v_1$ be a graph cycle. Let $A$ and $B$ be nonempty subsets in $V(C)$ such that 
\begin{equation}\label{dis}
\mbox{ for each $v_i\in A$ and $v_j\in B$, either $i=j$ or $|i - j| \geq q\geq 2$. }
\end{equation}

(i)  If $A=B$, then $s\geq q|A|.$ 

(ii) If $B\neq A$, then $s \geq |A| + |B| + 2q-3$ with equality only if $A\subset B$ or $B\subset A$.
\end{claim}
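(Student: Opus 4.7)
Part (i) is immediate from the hypothesis: when $A=B$, every pair of distinct elements of $A$ must be at cyclic distance at least $q$ along $C$, so the $|A|$ arcs of $C$ between consecutive elements of $A$ (in cyclic order) each have length at least $q$, and since their lengths sum to $s$, we get $s\geq q|A|$.

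For (ii), my plan is to partition $A\cup B$ as $D=A\cap B$, $A'=A\setminus B$, $B'=B\setminus A$, and then analyze the $m=|A\cup B|$ arcs of $C$ between consecutive elements of $A\cup B$ in cyclic order. The hypothesis translates into the rule that an arc may have length smaller than $q$ only when both of its endpoints lie in $A'$ or both lie in $B'$; every other arc has length at least $q$. Writing $r_{A'}$ and $r_{B'}$ for the numbers of maximal cyclic runs of $A'$- and $B'$-elements on $A\cup B$, the number of ``short'' arcs (those within a single run) is exactly $(|A'|-r_{A'})+(|B'|-r_{B'})$, while the remaining $|D|+r_{A'}+r_{B'}$ arcs are ``long''. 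Summing arc lengths yields
$$s \;\geq\; (|A'|+|B'|)+(q-1)(r_{A'}+r_{B'})+q|D| \;=\; |A|+|B|+(q-2)|D|+(q-1)(r_{A'}+r_{B'}).$$

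It then suffices to show that $E:=(q-2)|D|+(q-1)(r_{A'}+r_{B'})\geq 2q-3$, with equality forcing $A\subset B$ or $B\subset A$, which I would do by case analysis. If $D=\emptyset$, then (since $A, B$ are nonempty and distinct) the cyclic word over $\{A',B'\}$ alternates between runs, so $r_{A'}=r_{B'}\geq 1$ and $E\geq 2(q-1)>2q-3$. If $A'$, $B'$, and $D$ are all nonempty, then $|D|\geq 1$ and $r_{A'},r_{B'}\geq 1$, giving $E\geq(q-2)+2(q-1)=3q-4>2q-3$ (using $q\geq 2$). Finally, if exactly one of $A',B'$ is empty, say $A'=\emptyset$ (so $A=D\subsetneq B$; the other case is symmetric), then $r_{A'}=0$, $|D|\geq 1$, and $r_{B'}\geq 1$, so $E\geq(q-2)+(q-1)=2q-3$, with equality if and only if $|D|=1$ and $r_{B'}=1$. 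Hence equality can only occur in this final case or its symmetric twin, forcing $A\subset B$ or $B\subset A$.

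The only nontrivial step is setting up the short-vs-long arc dichotomy correctly in terms of the run counts $r_{A'}+r_{B'}$, so that the number of ``free'' arcs is accounted for by the cyclic run structure rather than by the sizes $|A'|,|B'|,|D|$ alone; once that translation is carried out, the remaining arithmetic is routine.
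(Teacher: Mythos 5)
Your argument is correct, and it takes a genuinely different route from the paper's. The paper proves (ii) by induction on $|A\cap B|$: the base case $A\cap B=\emptyset$ is handled by finding $q-1$ vertices on each side of $A$ before the nearest vertex of $B$, and the inductive step deletes a vertex $v_i\in A\cap B$ together with a length-$q$ segment of the cycle (replacing it by a single chord), applies the induction hypothesis to the shorter cycle, and adds the $q$ removed edges back. Your proof is instead a single global count: you classify the $|A\cup B|$ arcs between cyclically consecutive elements of $A\cup B$ as short or long according to whether both endpoints lie in the same maximal run of $A\setminus B$ or of $B\setminus A$, and the resulting inequality $s\ge |A|+|B|+(q-2)|A\cap B|+(q-1)(r_{A'}+r_{B'})$ yields the bound and the equality analysis in one stroke, since strict inequality holds unless $A'=\emptyset$ or $B'=\emptyset$, i.e.\ unless one set contains the other. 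Your approach is more self-contained and makes the equality case transparent (in the paper it is only implicit in how the induction propagates), while the paper's contraction device is reused almost verbatim to prove the companion Claim~\ref{consecpath2}, so it earns its keep there. One cosmetic point: when $q=2$ the term $(q-2)|D|$ vanishes, so in your final case equality holds whenever $r_{B'}=1$ regardless of $|D|$; this does not affect the conclusion, since that case already has $A\subsetneq B$. (Both you and the paper read the hypothesis $|i-j|\ge q$ as cyclic distance and treat the degenerate case $|A|=1$ of part (i) as immediate, so you are on equal footing there.)
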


\begin{proof} Part (i) is obvious. We prove (ii) by induction on $|A\cap B|$ and take indices modulo $s$.

If $A\cap B=\emptyset$, then $C$ contains $|A|+|B|$ vertices in $A\cup B$ and at least $q-1$ vertices outside of 
$A\cup B$ between $A$ and  a closest to $A$ vertex in $B$ in either direction along the cycle.

Suppose now that (ii) holds for all $A'$ and $B'$ with $|A'\cap B'|<t$ and that $|A\cap B|=t>0$, say $v_i\in A\cap B$.
By symmetry, we may assume $|A|\leq |B|$.  If $A=\{v_i\}$, then $C$ has $|B|-1$ vertices in $B-A$ and at least $q-1$
vertices between $v_i$ and a closest to $v_i$ vertex in $B-A$, in either direction along the cycle. Thus,
$s\geq 1+|B-A|+2(q-1)=|A|+|B|-1+2q-2 = |A| + |B| + 2q-3$, as claimed.

Finally, suppose $|A|\geq 2$. 
By definition, $(A\cup B)\cap \{v_{i-q+1}, v_{i-q+2},\ldots,v_{i+q-1}\}=\{v_i\}$.
Define $e=v_{i-q}v_{i+1}$ and let $A'=A-v_i$, $B'=B-v_i$, and 
$C'=v_1,e_1,\ldots,v_{i-q},e,v_{i+1},e_{i+1},\ldots,v_s, e_s, v_1$. Then $A'$ and $B'$ satisfy~\eqref{dis}.
So, by induction, $|V(C')|\geq |A'|+|B'|+2q-3$ with equality only if $A'\subset B'$. 
Hence
$$s\geq q+|V(C')|\geq q+ (|A|-1)+(|B|-1)+2q-3=|A|+|B|+ 3q-5$$
 with equality only if $A\subset B$. 
 Since $q\geq 2$, this proves (ii).
\end{proof}

The line graph of cycle $C_s$ of length $s$ is again $C_s$ in which the vertices play the roles of the edges of the original graph. Thus Claim~\ref{ver-new} implies the following.

\begin{claim}\label{ed-new} Let $C=v_1,e_1,\ldots,v_s, e_s, v_1$ be a graph cycle. Let $A$ and $B$ be nonempty subsets in $E(C)$ such that 
\begin{equation}\label{edg}
\mbox{ for each $e_i\in A$ and $e_j\in B$, either $i=j$ or $|i - j| \geq q\geq 2$. }
\end{equation}

(i)  If $A=B$, then $s\geq q|A|.$ 

(ii) If $B\neq A$, then $s \geq |A| + |B| + 2q-3$ with equality only if $A\subset B$ or $B\subset A$.
\end{claim}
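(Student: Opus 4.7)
My plan is a direct reduction to Claim~\ref{ver-new} via the line graph. The line graph $L(C)$ of the cycle $C=v_1,e_1,\ldots,v_s,e_s,v_1$ is itself a cycle of length $s$, whose vertex set may be taken to be $\{e_1,\ldots,e_s\}$ with $e_i$ adjacent to $e_{i+1}$ modulo $s$. Under this identification the cyclic index-distance between $e_i,e_j\in E(C)$ equals the cyclic index-distance between the corresponding vertices in $V(L(C))$, so the hypothesis~\eqref{edg} on $A,B\subseteq E(C)$ is exactly the hypothesis~\eqref{dis} of Claim~\ref{ver-new} applied in $L(C)$ with the same sets $A,B$ and the same parameter $q$.

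Invoking Claim~\ref{ver-new} therefore gives both parts of Claim~\ref{ed-new} simultaneously: in case $A=B$ we obtain $s\ge q|A|$, matching part (i); in case $A\ne B$ we obtain $s\ge |A|+|B|+2q-3$ with equality only if $A\subseteq B$ or $B\subseteq A$, matching part (ii). There is no real obstacle; the only point that warrants explicit checking is the identification of cyclic index-distance on $E(C)$ with cyclic index-distance on $V(L(C))$, which is immediate from the definition of the line graph of a cycle. An alternative would be to mimic the induction on $|A\cap B|$ used in the proof of Claim~\ref{ver-new} with common edges in place of common vertices, but this would only duplicate the previous argument, so the line-graph reduction is preferable.
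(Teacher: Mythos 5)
Your proof is correct and matches the paper's approach exactly: the paper also derives Claim~\ref{ed-new} from Claim~\ref{ver-new} by observing that the line graph of a cycle of length $s$ is again a cycle of length $s$ with the edges playing the role of vertices.
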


\begin{claim}\label{consecpath2}Let $C=v_1,e_1, \ldots, v_s, e_s, v_1$ be a graph cycle. Suppose $F\subset V(C)$. 
 Let $A$ and $B$ be nonempty subsets of $V(C)$ that are disjoint from $F$ and such that
\begin{equation}\label{dis2}
\mbox{ for each $v_i\in A$ and $v_j\in B$, either $i=j$ or $|i - j| \geq q\geq 2$. }
\end{equation}
\begin{equation}\label{dis3}
\mbox{ for each $v_i\in F$ and $v_j\in F \cup A \cup B$, either $i=j$ or $|i - j| \geq q\geq 2$. }
\end{equation}

(i)  If $A=B$, then $s\geq q|A|+q|F|$. 

(ii) If $B\neq A$, then $s \geq |A| + |B| +2q-3 +q|F|$ with equality only if $A\subset B$ or $B\subset A$.
\end{claim}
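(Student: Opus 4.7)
For part (i), since $A = B$, I would simply take $T = A \cup F$, which is a disjoint union of size $|A| + |F|$. Any two distinct $v_i, v_j \in T$ satisfy $|i-j| \geq q$: if both lie in $A$, this is~\eqref{dis2} with $A = B$; if at least one lies in $F$, it is~\eqref{dis3}. Applying Claim~\ref{ver-new}(i) to $T$ (as the common set) yields $s \geq q|T| = q(|A| + |F|)$.

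For part (ii), I would induct on $|F|$, with base case $|F| = 0$ being Claim~\ref{ver-new}(ii) itself. For the inductive step, fix any $v_i \in F$. By~\eqref{dis3}, the $q-1$ cyclic successors $v_{i+1}, \ldots, v_{i+q-1}$ of $v_i$ all lie outside $F \cup A \cup B$, so deleting these together with $v_i$ and joining $v_{i-1}$ directly to $v_{i+q}$ produces a graph cycle $C'$ of length $s - q$. (This is a legitimate cycle: since $A \neq B$ and both are nonempty, we can find $v_a \in A$ and $v_b \in B$ with $v_a \neq v_b$; then $\{v_i, v_a, v_b\}$ is pairwise $q$-separated by~\eqref{dis2}--\eqref{dis3}, so Claim~\ref{ver-new}(i) applied to this triple forces $s \geq 3q$, hence $s - q \geq 2q \geq 4$.) Assuming the conditions still hold in $C'$ for $A$, $B$, and $F' := F \setminus \{v_i\}$ (verified below), the induction hypothesis gives $s - q \geq |A| + |B| + 2q - 3 + q(|F|-1)$, hence $s \geq |A| + |B| + 2q - 3 + q|F|$. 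Each contraction removes exactly $q$ vertices, so equality propagates down to the base case and forces $A \subset B$ or $B \subset A$.

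The main obstacle is verifying that~\eqref{dis2} and~\eqref{dis3} survive in $C'$ with sets $A$, $B$, $F'$. The key observation is that cyclic distance equals the minimum of the two arc-lengths, so the condition $d_{\text{cyc}}^C(v_a, v_i) \geq q$ from~\eqref{dis3} implies that \emph{both} arc-distances between $v_a$ and $v_i$ are $\geq q$. Hence, for any surviving $v_a, v_b \in A \cup B \cup F'$, the arc of $C$ from $v_a$ to $v_b$ that passes through $v_i$ has length at least $2q$ (the sum of the $v_a \to v_i$ and $v_i \to v_b$ sub-arc-lengths, each $\geq q$). Removing $q$ consecutive vertices from this arc leaves it of length $\geq q$, while the other arc is unchanged; so the cyclic distance between $v_a$ and $v_b$ in $C'$ remains $\geq q$. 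The same verification handles pairs $v_a \in A$, $v_b \in B$ via~\eqref{dis2}, completing the induction.
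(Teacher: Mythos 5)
Your proof is correct and uses essentially the same idea as the paper: remove a block of $q$ consecutive vertices near each $F$-vertex, thereby reducing to Claim~\ref{ver-new} on a smaller cycle. The paper phrases this as iterated edge-contraction performed all at once and then invokes Claim~\ref{ver-new}(i)--(ii), whereas you formalize the reduction as an induction on $|F|$ with an explicit arc-length check that the distance conditions survive, and you handle part~(i) even more directly by applying Claim~\ref{ver-new}(i) to the single set $A\cup F$.
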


\begin{proof} Let $C'=v'_1, e'_1,v'_2, \ldots,v'_{s'}, e'_{s'}, v'_1$ be a cycle obtained from $C$
by iteratively contracting $e_i, e_{i+1}, \dots e_{i+q-1}$ for each $v_i \in F$. In particular, $s'=s-q|F|$.
Due to ~\eqref{dis3}, each $v_i \in A \cup B$ was unaffected by the edge contractions and hence still exists as some $v'_{i'}$ in $C'$. Moreover, by ~\eqref{dis3}, ~\eqref{dis2} still holds for $A$ and $B$ in $C'$.

So,  Claim~\ref{ver-new} applied to $A,B$ and $C'$ yields that if $A=B$, then $s'\geq q|A|$, and 
if $B\neq A$, then $s' \geq |A| + |B| + 2q-3$ with equality only if $A\subset B$ or $B\subset A$. 
Since $s'=s-q|F|$, this proves our claim.
\end{proof}

\begin{claim}\label{ver-ed} Let $q\geq 1$. Let $C=v_1,e_1,\ldots,v_s, e_s, v_1$ be a graph cycle. Let $I$ be an independent  nonempty subset of $V(C)$ and  $\emptyset\neq B\subset E(C)$ be
 such that 
\begin{equation}\label{v-e}
\mbox{ for each $v_i\in I$ and $e_j\in B$, the distance  on $C$ from $v_i$ to $\{v_j,v_{j+1}\}$ is at least $q$. }
\end{equation}
Then  $s\geq 2|I|+|B|+2(q-1)$. 
\end{claim}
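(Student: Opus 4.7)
The plan is to decompose $V(C)$ along the vertices of $I$ and count carefully. Let $I=\{v_{i_1},\ldots,v_{i_p}\}$ with $p=|I|$ be written in cyclic order along $C$. Since $I$ is independent on $C$, the complement $V(C)\setminus I$ splits into $p$ maximal cyclic arcs $A_1,\ldots,A_p$, each of size $m_t=|A_t|\geq 1$, so that $s=p+\sum_{t=1}^{p} m_t$.

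The first observation I would record is that every edge $e_j\in B$ lies entirely inside a single arc $A_t$. Indeed, both endpoints $v_j,v_{j+1}$ have distance $\geq q\geq 1$ from $I$, so neither is in $I$; since they are consecutive on $C$, they must belong to the same arc. A second observation is that the two endpoints of any arc $A_t$ are adjacent on $C$ to vertices of $I$, so the distance hypothesis forces both endpoints of $e_j$ to sit at cyclic position $\geq q$ from either end of $A_t$. Consequently, any arc $A_t$ that carries at least one edge of $B$ must satisfy $m_t\geq 2q$, and the number of edges of $B$ inside $A_t$ is at most $m_t-2q+1$.

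The remaining step is bookkeeping. Let $p_1$ be the number of arcs containing at least one edge of $B$ (so $p_1\geq 1$ since $B\neq\emptyset$) and set $p_2=p-p_1$. Summing the per-arc bounds gives
\[
|B|\;\leq\;\sum_{t:\,b_t>0}(m_t-2q+1)\;=\;\Bigl(\sum_{t:\,b_t>0} m_t\Bigr)-p_1(2q-1),
\]
whence, using $m_t\geq 1$ for arcs with $b_t=0$ and $p_1\geq 1$,
\[
\sum_t m_t - |B| \;\geq\; \sum_{t:\,b_t=0} m_t + p_1(2q-1) \;\geq\; p_2 + p_1(2q-1) \;=\; p + p_1(2q-2) \;\geq\; p + 2q - 2.
\]
Substituting into $s = p + \sum_t m_t$ then yields $s\geq 2p+|B|+2(q-1)=2|I|+|B|+2(q-1)$, as required.

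I do not anticipate a serious obstacle here; the argument is essentially a careful accounting once one notices that the $B$-edges are confined to ``long'' arcs and that each such arc accommodates only a limited number of them. The only mildly delicate point is the boundary case $q=1$, where the inequality degenerates to $s\geq 2|I|+|B|$; the chain of inequalities above still delivers this because both $p_1(2q-2)$ and $2q-2$ vanish, so the last step is just $p\geq p$.
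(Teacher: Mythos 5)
Your proof is correct and takes essentially the same approach as the paper: both partition the cycle along the vertices of $I$ and bound, arc by arc, how many $B$-edges can fit given the distance condition (the paper phrases the arcs as intervals of edges, so its lengths are $m_t+1$ in your notation, but the accounting is identical).
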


\begin{proof} The vertices of $I$ partition the edges of $C$ into $|I|$ intervals of length at least $2$. If such an interval $Q$ contains an $e_j\in B$, then by~\eqref{v-e}, apart from edges of $B$ it also contains at least $2q$ edges from the ends of the interval to the closest edges in $B$. This proves the claim.
\end{proof}

\begin{claim}\label{ver-ed2} Let $q\geq 1$, $q' \geq q-1$. Let $C=v_1,e_1,\ldots,v_s, e_s, v_1$ be a graph cycle. Let $A$ be a nonempty subset of $V(C)$ and  $\emptyset\neq B\subset E(C)$ be
 such that 
\begin{equation}\label{v-e2}
\mbox{ for each $v_i\in A$ and distinct $e_j, e_k\in B$, the distance  on $C$ from $v_i$ to $\{v_j,v_{j+1}\}$ is at least $q'$, and $|j-k| \geq q$. }
\end{equation}
Then  $s\geq |A|+q|B|+2q'-q$. 
\end{claim}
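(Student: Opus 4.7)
The plan is to decompose $C$ into the arcs bounded by the edges of $B$ and then bound, arc by arc, how many vertices of $A$ each arc can accommodate; summing over arcs yields the required inequality.

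First I would list the edges of $B$ in cyclic order as $e_{j_1}, e_{j_2}, \ldots, e_{j_b}$ (where $b=|B|$), set $j_{b+1} = j_1 + s$, and define the cyclic gaps $d_i = j_{i+1} - j_i$. The gap hypothesis on $B$ gives $d_i \geq q$ for every $i$, and $\sum_{i=1}^b d_i = s$.

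Next I would count the vertices of $A$ that lie in arc $i$. For $v_m \in A$ with $j_i + 1 \leq m \leq j_{i+1}$, the graph distance from $v_m$ to $\{v_{j_i}, v_{j_i+1}\}$ is at most the distance $m - j_i - 1$ measured along arc $i$ itself, and similarly the distance to $\{v_{j_{i+1}}, v_{j_{i+1}+1}\}$ is at most $j_{i+1} - m$. So the hypothesis forces $j_i + 1 + q' \leq m \leq j_{i+1} - q'$, whence arc $i$ contains at most $(d_i - 2q')_+$ vertices of $A$. Equivalently, any arc that is ``loaded'' (contains $|A_i| \geq 1$ vertices of $A$) satisfies $d_i \geq 2q' + |A_i|$, whereas empty arcs satisfy only $d_i \geq q$.

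Letting $b_L$ denote the number of loaded arcs and summing,
\[
s = \sum_{i=1}^{b} d_i \;\geq\; \sum_{\text{loaded}}\bigl(2q' + |A_i|\bigr) + \sum_{\text{empty}} q \;=\; 2q' b_L + |A| + q(b - b_L) \;=\; |A| + qb + (2q'-q) b_L.
\]
Since $A$ is nonempty, at least one arc is loaded, so $b_L \geq 1$. The hypothesis $q' \geq q - 1$ together with $q \geq 2$ (the essential case; $q = 1$ reduces to a direct vertex-exclusion count on the endpoints of the edges in $B$) gives $2q' - q \geq q - 2 \geq 0$, hence $(2q'-q)b_L \geq 2q'-q$, and I would conclude $s \geq |A| + q|B| + 2q' - q$.

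The main obstacle is the arc-wise bookkeeping in the second step: one must confirm that the along-arc distance genuinely bounds the graph distance from above (true since the graph distance on $C$ is the minimum over the two cyclic routes), so that the capacity $(d_i - 2q')_+$ stays valid even when arc $i$ wraps over half of $C$. The slack $2q' - q$ in the conclusion is then extracted exactly once, from a single loaded arc, which is precisely where the hypothesis $q' \geq q-1$ enters, in the form $2q' \geq q$.
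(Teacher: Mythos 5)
Your proof is correct and follows essentially the same route as the paper's: delete the edges of $B$, look at the resulting arcs (the paper calls them ``path segments''), bound each arc from below, and sum. The paper counts edges per segment ($\geq q-1$ for unloaded, $\geq 2(q'-1)$ non-$A$-incident edges plus $\geq |A_i|+1$ $A$-incident edges for loaded), while you count vertices per arc, but after unwinding both give exactly the same intermediate inequality $s \geq |A| + q|B| + (2q'-q)\,b_L$, where $b_L\geq 1$ since $A\neq\emptyset$. Your write-up is cleaner on one point: the paper asserts $2(q'-1)\geq q-1$, which needs $q'\geq (q+1)/2$ and is not implied by $q'\geq q-1$ when $q\in\{1,2\}$; your version only needs $2q'-q\geq 0$, which does follow from $q'\geq q-1$ once $q\geq 2$.

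Two small caveats worth flagging. First, as you note, the argument genuinely needs $2q'-q\geq 0$; for $q=1$, $q'=0$ the claim as stated is simply false (take $A=V(C)$, $B=E(C)$, so $s\geq |A|+|B|-1=2s-1$), so your aside that ``$q=1$ reduces to a direct vertex-exclusion count'' is optimistic — but the paper never invokes the claim with $q<2$, and the paper's own proof has the same restriction. Second, your step $d_i\geq q$ for the wrap-around gap $d_b=j_1+s-j_b$ requires reading $|j-k|$ in the hypothesis as the cyclic distance (as in Claim~\ref{shortpath}), not the raw absolute difference; this is the intended reading and you use it consistently, but it's worth a remark since the along-arc distance bound you appeal to for the $A$-exclusion also depends on it.
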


\begin{proof} Removing the edges of $B$ from $C$ yields $B$ path segments. Any segment not containing a vertex in $A$ has length at least $q-1$. Any segment containing a vertex from $A$ contains at least $2(q'-1) \geq q-1$ edges not incident to a vertex in $A$, and there are at least $|A| + 1$ edges incident to $A$. Thus $s \geq |B|(q-1)+|B|+2(q'-1)-(q-1)+|A|+1$. 
\end{proof}
%
%
%

\begin{claim}\label{ver-ver} Let $q\geq 3$. Let $C=v_1,e_1,\ldots,v_s, e_s, v_1$ be a graph cycle. Let $I$ be an independent  nonempty subset of $V(C)$ and $\emptyset\neq A\subset V(C)$ be
 such that  for each $v_i\in I$ and $v_h\in A$,
\begin{equation}\label{v-e'}
\mbox{the distance  on $C$ from $v_i$  to   $v_h$ is either $0$ or at least $q$. }
\end{equation}
  (i) If $A=I$ then $s\geq q|I|$. 
  
   (ii) If $A\subsetneq I$ then $s\geq 2|I|+(q-2)(|A|+1)$. 
   
  (iii) If $A\setminus I\neq \emptyset$, then $s\geq 2|I| +|A|+2q-3$.   
  
\end{claim}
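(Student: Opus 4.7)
Part (i) follows directly from Claim~\ref{ver-new}(i) applied with both sets equal to $I$: the hypothesis that any two $v_i,v_h\in I$ are either equal or at distance at least $q$ verifies the required condition, yielding $s\geq q|I|$.

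For part (ii), since $A\subsetneq I$, the set $I\setminus A$ is nonempty. I would place the $|A|$ vertices of $A$ on $C$, dividing the cycle into $|A|$ arcs. Because $A\subseteq I$, any two distinct vertices of $A$ are at distance at least $q$ by the hypothesis (viewing one as being in $I$ and the other in $A$), so each arc has length at least $q$. Each vertex of $I\setminus A$ lies in some arc and, by hypothesis, is at distance at least $q$ from both $A$-endpoints of that arc; moreover, distinct vertices of $I\setminus A$ in the same arc are at pairwise distance at least $2$ by independence of $I$. Hence an arc containing $m\geq 1$ vertices of $I\setminus A$ has length at least $2m+2(q-1)$, while an arc with no such vertex has length at least $q$. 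Letting $t\geq 1$ denote the number of arcs containing at least one vertex of $I\setminus A$, summing the arc-length bounds yields
\[ s \;\geq\; (|A|-t)q + t(2q-2) + 2|I\setminus A| \;=\; 2|I| + (q-2)(|A|+t) \;\geq\; 2|I|+(q-2)(|A|+1). \]

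For part (iii), I would place the vertices of $I\cup A$ on $C$, creating $|I\cup A|$ arcs, and classify each arc by which of the sets $X:=I\cap A$, $Y:=I\setminus A$, and $Z:=A\setminus I$ its two endpoints lie in (with $Z\neq\emptyset$ by hypothesis). Using the hypothesis and independence, the relevant arc-length lower bounds are: arcs with at least one endpoint in $X$ have length $\geq q$ (the $X$-endpoint lies in both $I$ and $A$, so the distance condition applies to the other endpoint); arcs with one endpoint in $Y$ and one in $Z$ have length $\geq q$; arcs with both endpoints in $Y$ have length $\geq 2$ by independence; arcs with both endpoints in $Z$ are unconstrained beyond length $\geq 1$. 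Because $Z\neq\emptyset$ and every $Z$-vertex must be separated from every $I$-vertex by distance $\geq q$, the cyclic structure forces the sequence of arc types around $C$ to contain at least one $I$--$Z$ transition arc of length $\geq q$, and if $Y\neq\emptyset$ then the $I$-block structure yields a second such long arc. Summing the arc-length lower bounds over the resulting pattern---grouping $YY$ arcs together, $ZZ$ arcs together, and collecting the $q$-length transition arcs---yields $s\geq 2|I|+|A|+2q-3$.

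The main technical obstacle lies in part (iii): the case analysis must simultaneously handle the three possibilities for emptiness of $X$ and $Y$ (and the various block arrangements of $I\cup A$ around the cycle) and confirm that the bound is achieved only in the expected configuration where $Y$ and $Z$ each form a single block separated from one another by two arcs of length exactly $q$.
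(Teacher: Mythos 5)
Parts (i) and (ii) are correct. For (ii) you use the dual decomposition to the paper's: you cut $C$ at the $|A|$ vertices of $A$ and account for the $I\setminus A$ vertices sitting inside those arcs, whereas the paper cuts $C$ at the $|I|$ vertices of $I$ into ``$I$-intervals'', notes that each has length at least $2$ by independence, and counts at least $|A|+1$ of them incident to an $A$-vertex (hence of length at least $q$). Both computations give $s\geq 2|I|+(q-2)(|A|+1)$, so (ii) is a legitimate alternative route.

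Part (iii) is a genuine gap, and you acknowledge it. You set up reasonable bookkeeping---classify arcs of $I\cup A$ by the types $X=I\cap A$, $Y=I\setminus A$, $Z=A\setminus I$ of their endpoints, with per-type lower bounds $q,q,q,q,2,1$ for $XX,XY,XZ,YZ,YY,ZZ$---but you never sum these to the target inequality, and you state explicitly that the case analysis over emptiness of $X$ and $Y$ and over block arrangements is unresolved. As written, (iii) is not proved. The paper's route is different and cleaner: it inducts on $|A\cap I|$. The base case $A\cap I=\emptyset$ is handled directly via $I$-intervals (an $I$-interval containing $m\geq 1$ vertices of $A$ has length at least $2q+m-1$, exceeding the trivial $2$ by $m+2q-3$). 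For the inductive step one picks $v_i\in A\cap I$, deletes the $q$-vertex segment $v_i,v_{i+1},\ldots,v_{i+q-1}$ and adds the edge $v_{i-1}v_{i+q}$; after checking that the distance and independence hypotheses descend to $I'=I-v_i$, $A'=A-v_i$ on the contracted cycle $C'$, the bound on $|V(C')|$ plus the recovered $q$ gives the bound on $s$. This reduction is the key idea you are missing. It is worth noting that the direct sum of your per-arc lower bounds is genuinely delicate: in the subcase $Y=\emptyset$ (so $I\subseteq A$) with $X$ and $Z$ each forming a single block (so $n_{XZ}=2$, $n_{XX}=|X|-1$, $n_{ZZ}=|Z|-1$), the arc sum gives only $q|X|+|Z|+(q-1)$, which for $q=3$ equals $3|X|+|Z|+2=2|I|+|A|+2q-4$, one short of the target. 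So a plain sum-of-arc-lengths argument does not close, and either extra structural input or the paper's contraction-based induction is needed.
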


\begin{proof} Call a subpath of $C$ connecting two verties in $I$ and not containing other vertices in $I$ an $I$-{\em interval}. If $A=I$, then each $I$-interval has length at least $q$, which yields (i). 
To prove (ii), observe that each $I$-interval has at least  $2$ edges, and the $I$-intervals with at least one end in $A$ have length at least $q$. Since $A\neq I$, the number of such intervals is at least $|A|+1$. This proves (ii).

We prove (iii) by induction on $|A\cap I|$. If $|A\cap I|=\emptyset$, then
each  $I$-interval $Q$ containing $m>0$ vertices of $A$
  by~\eqref{v-e'} contains at least $2q$ edges from the ends of the interval to the closest vertices in $A$ and a path of at least $m-1$ edges connecting these extremal vertices. Together, $Q$ has at least $2q+m-1$ edges which is by $m+2q-3$ more than $2$. This proves the base of induction. 
  
  Suppose now $v_i\in A\cap I$. Consider $C'$ obtained from $C$ by deleting vertices $v_i,v_{i+1},\ldots,v_{i+q-1}$ and adding edge $v_{i-1}v_{i+q}$. Let $I'=I-v_i$ and $A'=A-v_i$. By induction assumption, 
  $|V(C')|\geq 2|I'|+|A'|+2q-3=2|I|+|A|+2q-6$. So, $s\geq 2|I|+|A|+2q-6+q\geq 2|I|+|A|+2q-3$, as claimed.
\end{proof}

\subsection{Proof of Lemma~\ref{nlem1} for dcp-pairs}
\begin{proof}[Proof of Lemma~\ref{nlem1} when $(C,P)$ is a dcp-pair.]Suppose that among all dcp- and dcc-pairs, a best $(C,P)$ is a dcp-pair.

 \textbf{Case 1:} $u_1$ is in an $H'$-edge $g$. If $u_\ell\in g$, then we can add $g$ to $P$ as $f_\ell$, a contradiction
 to (S4).
So, $|g\cap V(C)|\geq r-\ell+1$. 
 If $b_\ell>0$, then
by Claims~\ref{shortpath} and~\ref{ver-ed} with $I=g\cap V(C)$, $B=B_\ell$ and $q=\ell$, we have 
\[c\geq 2(r-\ell+1)+2(\ell-1)+b_\ell=2r+b_\ell\geq 2k-2+b_\ell.\] So $b_\ell\leq 1$, and $d_{H'}(u_\ell)\geq d_H(u_\ell) - b_\ell - d_P(u_\ell) \geq  k-1-(\ell-1)\geq 1$.
Let $g'\in E(H')$ contain $u_\ell$. Since $P$ does not extend to a cycle, $u_1\notin g'$, so $g'\neq g$ and  $|g'\cap V(C)|\geq  r-\ell+1$.

 For $j\in\{1,\ell\}$, let $A_j=N_{H'}(u_j)\cap V(C)$.
 By Claim~\ref{shortpath}, if $v_i, v_j \in A_1 \cap A_\ell$ then $i=j$ or $|i-j| \geq \ell+1$. We apply Claim~\ref{consecpath2} with $F=A_1\cap A_\ell,$
$A=A_1-F$,
$B=A_\ell-F$ and $q=\ell+1$. 
If  $A\not\subseteq B$ and $B\not\subseteq A$, then Claim~\ref{consecpath2}(ii) gives
$$c\geq 2(r-\ell+1-|F|) +2(\ell+1)-2 +(\ell+1)|F|\geq 2r-2\ell+2-2|F|+2\ell+3|F|\geq 2r+2\geq 2k,$$
a contradiction.  If say $A\subsetneq B$ but $|A\cup F|+|B\cup F|\geq 2r-2\ell+3$, then 
$$c\geq 2r-2\ell+3-2|F| +2(\ell+1)-3 +(\ell+1)|F|\geq 2r-2\ell+2+2\ell\geq  2k,$$ again. So, assume $A_1=A_\ell$. 

If $k\leq r$, then by Claim~\ref{consecpath2}(i),
  $c\geq (\ell+1)(r-\ell+1)\geq 2r\geq 2k$ for $2\leq \ell\leq k-1\leq r-1$, 
  a contradiction.
  Thus, let $k=r+1$. Also, if $|A_1|\geq r-\ell+2$, then we similarly have 
 $c\geq (\ell+1)(r-\ell+2)\geq 2(r+1)\geq 2k$. Thus  $|A_1|= r-\ell+1$, which means $g\cap V(P)=V(P)-u_\ell$,
 $g'\cap V(P)=V(P)-u_1$,
  and $u_1,u_\ell$ are not contained in other edges of $H'$. It follows that there are $e_i\in E(C)$ containing $u_1$
  and  $e_j\in E(C)$ containing $u_\ell$. 

If $i\neq j$, then Claim~\ref{ver-ed} with $I = A_1$, $B = \{e_i, e_j\}$ and $q =\ell$ implies $c \geq 2(r-\ell+1) + 2 + 2(\ell-1) =2r+2 \geq 2k$. Thus $e_i=e_j$ and $d_C(u_1) = d_C(u_\ell) = 1$ and $r = |e_i| \geq |\{u_1, u_\ell, v_i, v_{i+1}\}| \geq 4$. At least one of $f_1,f_{\ell-1}$ is not $V(P)$, say $f_1\neq V(P)$. Then switching $g$ with $f_1$ in $P$ we get another dcp-pair that is $2$-good. Hence every vertex in $f_1 \cap V(C)$ is distance at least $\ell+1$ from any vertex in $A_\ell =A_1$. Either $f_1\cap V(P)\neq \{u_1,\ldots,u_{\ell-1}\}$ or $A_1\neq f_1\cap V(C)$. In both cases, we get a case we have considered.

 
\medskip

\textbf{Case 2:}  $u_1, u_\ell$ are not in any $H'$-edges. Then $d_{C}(u_1)\geq k-(\ell-1)\geq 2$
and  $d_{C}(u_\ell)\geq k-(\ell-1)\geq 2$.

We apply Claim~\ref{ed-new} with 
$A=B_1$,
$B=B_\ell$ and $q=\ell$. If $A\not\subseteq B$ and $B\not\subseteq A$, then the claim gives
$c\geq 2(k-\ell+1)+2\ell-2=2k$, a contradiction. If say $A\subseteq B$ but $|A|+|B|\geq 2k-2\ell+3$, then 
$c\geq 2k-2\ell+3+2\ell-3=2k$, again. So assume $B_1=B_\ell$ and $|B_1|=k-\ell+1$. 

Then $d_P(u_\ell)\geq \ell-1=|E(P)|$. So $u_\ell\in f_1$. For any $2\leq i \leq \ell-1$ Consider $P_{i}=u_1,f_1, \ldots, u_{i-1}, f_{i}, u_\ell, f_{\ell-1}, \ldots, u_{i}$, and observe that $(C,P_{i})$ is another best dcp-pair. 
If $d_{H'}(u_{i})\geq 1$, then we get Case 1. Otherwise
as above, we get $B_{i}=B_1$ and $d_P(u_{i})=|E(P)|$. 
In other words,
\begin{equation}\label{allc}
V(P)\subseteq \bigcap\nolimits_{j=1}^\ell f_j\;\mbox{\em and $V(P)\subseteq e_i$ for each $e_i\in B_1=\ldots=B_\ell$.}
\end{equation}
By Claim~\ref{shortpath}(i), for distinct $e_i, e_j \in B_1$, $|j-i| \geq \ell$. 

Since $V(P)\subseteq f_j$ for each $j\in [\ell]$, we can construct any path covering $V(P)$ using edges
$f_1,\ldots,f_{\ell-1}$ in arbitrary order.  Let $I=\left(\bigcup_{j=1}^\ell f_j\right)\cap V(C)$ and
$J=\left(\bigcup_{j=1}^\ell f_j\right)-(V(P)\cup V(C))$. 

Suppose $w\in J$, by symmetry say $w\in f_1$. Assume first that there is $g\in E(H')$ such that $w\in g$. By the case,
$g\cap V(P)=\emptyset$.

Let $P'$ be obtained by replacing $u_1$ with $w$ in $P$. Then $(C,P')$ is a $2$-good dcp-pair and so by Claims~\ref{shortpath} and~\ref{allneighbors}, $g \subseteq V(C) \cup \{w\}$ and the distance from any $v_i \in g$ to some edge in $B_1$ is at least $\ell$. Apply Claim~\ref{ver-ed2} with $A = g \cap V(C)$, $B = B_1$, and $q=q'=\ell$ to obtain $c \geq (r-1) + \ell (k-(\ell-1)) + \ell \geq (r-1) + 2k  > c$, a contradiction. Thus $d_{H'}(w) = 0$. 
%

If there is $e_{j}\in E(C)-B_1$ such that $w\in e_{i_0}$, then in view of the path $P'$ above, then for any $e_i \in B_\ell,  |j-i| \geq \ell$ by Claim~\ref{shortpath}. Therefore
$c\geq \ell(|B_1| +1) = \ell (k-(\ell-1)+1)\geq 2k$, a contradiction again.
 Since $d_H(w)\geq k$, $w\in \bigcap\nolimits_{j=1}^{\ell-1} f_j$ 
and $w\in e_i$ for each $e_i\in B_1$. Then ~\eqref{allc} holds not only for $V(P)$ but for $V(P)\cup J$, which implies
$\ell\leq |V(P)\cup J|\leq r-2$ and therefore
 $|I|\geq 2$.
If $v_i\in I$, say $v_i\in f_1$, $i\leq i'\leq i+\ell-2$ and $e_{i'}\in B_\ell$, then the cycle obtained from $C$ by replacing subpath $v_i,e_i,v_{i+1},\ldots,e_{i'},v_{i'+1}$ with the path $v_i,f_1,u_2,f_2,\ldots,u_\ell,e_{i'},v_{i'+1}$ is longer than $C$,
a contradiction.

Applying Claim~\ref{ver-ed2} with $A = I, B = B_1, q' = \ell-1, q=\ell$ we obtain $c \geq 2 + \ell(k-(\ell-1)) + 2(\ell-1)-\ell \geq \ell(k-\ell+2) \geq 2k$, a contradiction.
\end{proof}

\subsection{Proof of Lemma~\ref{nlem1} for dcc-pairs}


\begin{proof}[Proof of Lemma~\ref{nlem1} when $(C,P)$ is a dcc-pair.]
\textbf{Case 1:} There is $g\in E(H')$ with $g\cap V(P)\neq \emptyset$ and $ |g\cap V(C)|\geq k-\ell+1$. By symmetry, we may assume
$u_1\in g$ and $|f_\ell\cap V(P)|\leq |f_1\cap V(P)|$. Since $\ell\leq r$, at most one $f_j$ is contained in $V(P)$.
So
\begin{equation}\label{l=r?}
 \mbox{\em $|f_\ell\cap V(C)|\geq r-\ell$ and if $r=\ell$, then  $|f_\ell\cap V(C)|\geq 1$.}
 \end{equation}
  By Claim~\ref{shortpath}, if $v_i \in g$ and $v_j \in f_\ell$, then $i=j$ or $|i-j| \geq \ell+1$. By Claim~\ref{ver-ver} with $I=g\cap V(C)$, $A=f_\ell\cap V(C)$ and $q=\ell+1$, we have $3$ possibilities. If $A=I$, then $c\geq (\ell+1)(k-\ell+1)\geq 2k$.
If $A\subsetneq I$ then $c\geq 2(k-\ell+1)+(\ell-1)(|f_\ell\cap V(C)|+1)$. By~\eqref{l=r?}, when $2\leq \ell\leq r-1$ this gives
$c\geq 2(k-\ell+1)+(\ell-1)(r-\ell+1)\geq 2k$, and when $\ell=r$ (and hence $k=r+1$) this gives
$c\geq 2(k-r+1)+(k-2)2= 2k$, again. If $A\setminus I\neq \emptyset$, then $c\geq 2(k-\ell+1) +|A|+2\ell-1>2k$. In all cases we get a contradiction.

\medskip
\textbf{Case 2:} There is $g\in E(H')$ such that $g\supseteq V(P)$. Then by Rule (S5), each $f_j\in E(P)$ contains $V(P)$. 
Since $|E(P)|\geq 2$, $\ell\leq r-1$. 
Let $E'=E(P)\cup \{g\}$. By Claim~\ref{cyc1}, every edge in $E'$ is a subset of $V(C) \cup V(P)$.  Set $I=( \bigcup_{f\in E'} f)\cap V(C)$. Since the sets $f_j\cap V(C)$ are distinct for distinct $j$, $|I|\geq r-\ell+1$. Moreover, if $r=\ell+1$ then $|I|\geq |E'|=\ell+1\geq 3>r-\ell+1$.
By the case, for any $u_i,u_{i'}\in V(P)$ and any $f,f'$, there is a $u_i,u_{i'}$-path of length $\ell-1$ whose set of edges is $E'-\{f,f'\}$. Hence if  $v_i\in f\in E'$ and $v_{i'}\in f'\in E'$ where $f'\neq f$, then $|i'-i|\geq \ell+1$. Since $|E'| \geq 3$, there are some $t \geq 3$ segments in $C$ of length at least $\ell+1$ that are internally disjoint from $I$. The vertices of $I$ are nonconsecutive on $C$ thus the edges $\{e_i, e_{i+1}: v_i \in I\}$ are distinct (but may intersect the segments of length at least $\ell+1$). We get
\[c\geq 2|I|+t(\ell-1)\geq 2(r-\ell+1)+t(\ell-1)=2r+\ell-1.\] This is at least $2k$, unless $t=3$, $k=r+1$, $\ell=2$ and $|I|=r-\ell+1$.
If $|I| = r-\ell+1$, then for each $f \in E', f \cap V(C)$ is an $r-\ell$-subset of $I$, and it follows that every distinct $v_i, v_j \in I$ satisfies $|i-j| \geq \ell+1$. In order to have $t = 3$, we must then have $|I|=3$. 
Thus, the unresolved situation is $\ell=2$, $|I|=3=r-2+1=r-1$ and $k=r+1=5$. In this case, $c=9$ and up to symmetry, $I=\{v_1,v_4,v_7\}$,
 so we may assume $f_1=\{u_1,u_2,v_1,v_4\},f_2=\{u_1,u_2,v_1,v_7\},g=\{u_1,u_2,v_4,v_7\}$. But $d_H(u_2)\geq 5$. If some $e_i$ contains $u_2$, then by symmetry we may  assume $i=1$, contradicting Claim~\ref{bigsmallcycle}.
Thus $d_C(u_2)=0$, so $d_{H'}(u_2)\geq 3$. One such edge is $g$, another maybe is $\{u_2,v_1,v_4,v_7\}$, but then the third must a vertex in $V(C)-\{v_1,v_4,v_7\}$, again contradicting Claim~\ref{bigsmallcycle} (maybe switching $g$ with $f_1$).

\medskip
\textbf{Case 3:} There are edges in $H'$ intersecting $V(P)$, but for each
 $g\in E(H')$,
   $|g\cap V(P)|\leq \ell-1$ and $ |g\cap V(C)|\leq k-\ell$. Then $k=r+1$ and for each
 $g\in E(H')$,
   $|g\cap V(P)|= \ell-1$. Let $A=\bigcup_{i=1}^\ell f_i\cap V(C)$. Since $|E(P)|\geq 2$, $|A|\geq r-\ell+1$.
 
 Suppose some distinct $i,j$, $v_i\in g$ and $v_{j}\in A$. Without loss of generality, $v_{j}\in f_\ell$ (because $P$ is a cycle). Since $|g\cap V(P)|= \ell-1$,
 we may also assume $u_1\in g$. By Claim~\ref{shortpath}, $|i-j| \geq \ell+1$.  
So, we can apply  Claim~\ref{ver-ver} with $I=g\cap V(C)$ and $q=\ell+1$. Since $|A|\geq |I|$, if $A\neq I$, then
  $$c\geq 2|I|+|A|+2q-3 \geq 2(r-\ell+1)+(r-\ell+1)+2\ell-1=3r+2-\ell\geq 2r+2, \mbox{a contradiction.}$$
Thus $A=I$ and the vertices of $A$ partition $V(C)$ into $r-\ell+1$ intervals of length at least $\ell+1$. 
If some 
$e_i$ contains a vertex of $P$, say $u_\ell\in e_i$, then $g\cap \{u_{\ell-1},u_1\}\neq \emptyset$, say $u_1\in g$.
So, in this case, if $v_{j}\in I$, then by Claim~\ref{shortpath}, $v_j$ is distance at least $\ell$ from $\{v_i, v_{i+1}\}$ in $C$. 
 An interval containing $m$ edges of $C$ that intersect $V(P)$ has at least $2\ell+m$ edges, and \[c\geq (\ell+1)(r-\ell+1)-(\ell+1)+2\ell +m \geq (\ell+1)(r-\ell+1) + \ell\] with equality only when exactly one edge of $C$ intersects $V(P)$. We get a contradiction, unless $\ell=r$ and exactly  one edge of $C$ intersects $V(P)$. In this case, $|I|=1$ and we can rename the vertices of $C$ so that $I=\{v_c\}$ and $e_r$ is the edge intersecting $V(P)$. Then each edge $f\in E(H')\cup E(P)$ intersecting $V(P)$ is contained in $V(P)\cup \{v_c\}$.
 So, the sum of degrees of vertices in $P$ is at most ${\ell \choose r-1}(r-1)\leq r(r-1)$ (for edges containing $v_c$) plus $r$ (for a possible edge $V(P)$) plus $r-2$ (for $e_r$). This totals $r^2+r-2$ which is less than $k\ell \leq \delta(H) |V(P)|$, a contradiction.

The remaining possibility is that no edges of $C$ intersect $V(P)$. Since $A=I$ for any choice of $g\in E(H')$ and $|I|=r-\ell+1$, 
$g\cap V(C)$ is the same for all  $g\in E(H')$ intersecting $V(P)$, and $f\cap V(C)\subset I$ for all $f\in E(P)$. So
each edge $f\in E(H')\cup E(P)$ intersecting $V(P)$ is contained in $V(P)\cup I$ and $|V(P)\cup I|=r+1$. Therefore,
$\sum_{u\in V(P)}d_H(u)$ is at most $\ell(r-\ell+1)$ (for the at most $r-\ell+1$ edges containing $V(P)$) plus
$\ell(\ell-1)$ (for the at most $\ell$ edges containing $I$), which sums to $\ell r<\ell k$, a contradiction.

\medskip
\textbf{Case 4:} No edges in $H'$ intersect $V(P)$ and $f_j\supseteq V(P)$ for each $j\in [\ell]$. Let $B(P)$ be the set of the edges in $C$ intersecting $V(P)$.
Since $|E(P)|\geq 2$, $\ell\leq r-1$. Let  $I= \bigcup_{i=1}^\ell f_i\cap V(C)$. By~\eqref{cyc1}, each $f_i$ is contained in $V(C) \cup V(P)$, therefore the sets $f_i\cap V(C)$ are distinct for distinct $i$. It follows that 
\begin{equation}\label{k-l}
|I|\geq r-\ell+1\geq k-\ell.
\end{equation}
By the case, for any $u_i,u_{i'}\in V(P)$ and any $f_j\in E(P)$, there is a $u_i,u_{i'}$-path of length $\ell-1$ whose set of edges is $E(P)-f_j$. Hence if  $v_j$ belongs to an edge of $E(P)$ and $e_{j'}\in B(P)$  where $j\neq j'$, then by Claim~\ref{shortpath} $v_j$ has distance at least $\ell$ from $\{v_{j'}, v_{j'+1}\}$. 
 Also the vertices of $I$ are nonconsecutive on $C$.  So, by Claim~\ref{ver-ed} with  $B=B(P)$ and $q=\ell$, we have 
$c\geq 2|I|+2(\ell-1)+|B(P)|.$ If $|I|\geq k-\ell+1$ or $|I|=k-\ell$ and $|B(P)|\geq 2$, then we get
 a contradiction. Otherwise, by~\eqref{k-l}, $k=r+1$, $|I|=k-\ell$ and $|B(P)|\leq 1$. Since each $u_j\in V(P)$ is in at least $k-\ell\geq 1$ edges of $C$, $|B(P)|= 1$, say $e_i \supseteq V(P)$, and $r-2=|e_i|-2 \geq |V(P)| = \ell$. So,
 $\sum_{u\in V(P)}d_H(u)  \leq \ell|E(P)|+\ell(1)=\ell(\ell+1)\leq \ell(r-1)<\ell k$, a contradiction.

\medskip
\textbf{Case 5:} No edges in $H'$ intersect $V(P)$ and there is $j\in [\ell]$ such that either $|f_j\cap V(C)|\geq k-\ell+1$ or 
 $|f_j\cap V(C)|= k-\ell$ and $|B_j\cup B_{j+1}|\geq 2$. By Claim~\ref{ver-ed} with $I=f_j\cap V(C)$
  $B=B_j\cup B_{j+1}$ and $q=\ell$, we have 
$c\geq 2|I|+2(\ell-1)+|B|.$ Since by the case $2|I|+|B|\geq 2k-2\ell+2$, we get a contradiction.

\medskip
\textbf{Case 6:} All other possibilities. This means  (a) no edges in $H'$ intersect $V(P)$, (b) 
there is $j_0\in [j]$ with $|f_{j_0}\cap V(P)|\leq \ell-1$,
 and (c) for each $j\in [\ell]$, $|f_j\cap V(C)|\leq k-\ell$, and   if
 $|f_j\cap V(C)|= k-\ell$ then  $|B_j\cup B_{j+1}|\leq 1$. Since $|f_{j_0}\cap V(C)|\geq r-(\ell-1)\geq k-\ell$, in order for (c) to hold, we need $k=r+1$ and  $|B_{j_0}\cup B_{j_0+1}|\leq 1$. For all $u_j \in V(P)$, $d_C(u_j) \geq k-\ell \geq 1$. In view of $u_{j_0}$, we need $k=\ell+1$.
 Thus $r=\ell=k-1$. Then as an edge in $C$ cannot contain all $\ell$ vertices in $P$, at least two edges of $C$ intersect $V(P)$, so there are two distinct $j_1,j_2$ such that 
 $|B_{j_i}\cup B_{j_i+1}|\geq 2$ for $i=1,2$. But since $\ell=r$, at most one of $f_{j_1}$ and $f_{j_2}$ contains $V(P)$, so the other satisfies Case 5, a contradiction.
\end{proof}

\section{Concluding remarks}

1. It would be interesting to understand whether for $4\leq k \leq r$, the bound on $\delta(H)$ in Theorem~\ref{mainthm3} may be lowered to $k-1$ to match  Construction~\ref{cons1} or there is a better construction.

2. Observe that there are sharpness examples for Theorems~\ref{dirac},~\ref{dirac2},~\ref{mainold2} and
\ref{mainthm} with connectivity $3$ and greater  (see, e.g. Construction~\ref{constbip} for  Theorem~\ref{mainthm}).
But we do not know $3$-connected  extremal examples  for  Theorem~\ref{mainthm}.
Moreover,  it well may be that hypergraphs with higher connectivity require a smaller minimum degree to force the existence of a long Berge cycle. A natural question to study would be the following.


{\bf Question.} Let $n \geq r+1 \geq k \geq 3$. What is the minimum number $f(k,r)$  such that every $n$-vertex, $r$-uniform, $3$-connected hypergraph with minimum degree at least $f(k,r)$ necessarily has $c(H) \geq \min\{n, |E(H)|, 2k\}$?

3. Another interesting question is: How the bound will change if instead of $r$-graphs we consider the hypergraphs in which the size of each edge is at least $r$?

\end{document}